\setlist[enumerate]{label={\rm(\roman*)}, leftmargin=2em}
\theoremstyle{plain}
\newtheorem{theorem}{Theorem}[section]
\newtheorem{lemma}[theorem]{Lemma}
\newtheorem{corollary}[theorem]{Corollary}
\newtheorem{proposition}[theorem]{Proposition}
\newtheorem{mainthm}{Theorem}
\theoremstyle{definition}
\newtheorem{remark}[theorem]{Remark}
\newtheorem{definition}[theorem]{Definition}
\newtheorem{example}[theorem]{Example}
\numberwithin{theorem}{section}
\numberwithin{equation}{section}
\DeclarePairedDelimiter\abs{\lvert}{\rvert}
\DeclarePairedDelimiter\nrm{\lVert}{\rVert}
\DeclarePairedDelimiter\set{\lbrace}{\rbrace}
\DeclarePairedDelimiter\brk{(}{)}
\let\brc\set
\renewcommand\d{\mathrm{d}}
\newcommand\dd{\,\d}
\newcommand{\hra}{\hookrightarrow}
\newcommand{\hrastar}{%
  \mathrel{\mathop{\hra}\limits^{
    \vbox to 0ex{\kern-2\ex@
    \hbox{$\scriptstyle*$}\vss}}}}
\newcommand\measurable{\mathcal{M}}
\newcommand\measurablep{\mathcal{M}_+}
\newcommand\aefinite{\mathcal{M}_0}
\newcommand{\N}{\mathbb{N}}
\newcommand{\R}{\mathbb{R}}
\newcommand{\RR}{\mathcal R}
\renewcommand{\SS}{\mathcal S}
\newcommand{\MM}{\mathcal M}
\newcommand{\FF}{\mathcal F}
\newcommand{\LL}{\mathcal L}
\newcommand{\DD}{\mathcal D}
\newcommand{\vp}{\varphi}
\newcommand{\vpx}{\varphi_X}
\newcommand{\vpxp}{\varphi_{X'}}
\newcommand{\vpy}{\varphi_Y}
\newcommand{\nm}{{\frac{n}{m}}}
\newcommand{\mn}{{\frac{m}{n}}}
\newcommand{\ip}{{\frac{1}{p}}}
\newcommand{\iq}{{\frac{1}{q}}}
\newcommand{\ib}[1][f]{%
\if#1f\frac{1}{\beta}
\else\if#1i{1/\beta}
\fi\fi
}
\newcommand{\ai}{{\alpha-1}}
\newcommand{\tb}{t^{\beta}}
\newcommand{\RM}{(\RR,\mu)}
\newcommand{\SN}{(\SS,\nu)}
\newcommand{\MRM}{\MM\RM}
\renewcommand{\MR}{\mu(\RR)}
\newcommand\lb{{t_0}}
\newcommand\ub{{t_\infty}}
\newcommand\LLM{\{L,\Lambda,M\}}
\DeclareRobustCommand\onedot{\futurelet\@let@token\@onedot}
\def\@onedot{\ifx\@let@token.\else.\null\fi\xspace}
\def\eg{e.g\onedot} 
\def\ie{i.e\onedot} 
\def\cf{cf\onedot} 
\def\ae{a.e\onedot} 
\def\ri{r.i\onedot} 
\def\etc{etc\onedot}
\def\paragraph{\bigskip\@startsection{paragraph}{4}%
  \z@\z@{-\fontdimen2\font}%
  {\normalfont\bfseries}}
\title{Optimality problems in Orlicz spaces}
\author{V\'\i t Musil\textsuperscript{1}}
\address{\textsuperscript{1}%
Institute for Theoretical Computer Science -- Faculty of Informatics, Masaryk University,
Botanick\'a 554/68a, Ponava, Brno
Czech Republic}
\author{Lubo\v s Pick\textsuperscript{2}}
\address{\textsuperscript{2}%
Department of Mathematical Analysis,
Faculty of Mathematics and Physics,
Charles University,
So\-ko\-lo\-vsk\'a~83,
186~75 Praha~8,
Czech Republic}
\author{Jakub Tak\'a\v c\textsuperscript{2}}
\email[Musil, corresponding author]{musil@fi.muni.cz}
\urladdr[Musil]{0000-0001-6083-227X}
\urladdr[Pick]{0000-0002-3584-1454}
\urladdr[Tak\'a\v c]{0000-0003-2158-7456}
\begin{document}

\begin{abstract}
In mathematical modelling, the data and solutions are represented as measurable functions and their quality is oftentimes captured by the membership to a certain function space.
One of the core questions for an analysis of a model is the mutual relationship between the data and solution quality.
The optimality of the obtained results deserves a special focus.
It requires a careful choice of families of function spaces balancing between their expressivity, \ie the ability to capture fine properties of the model, and their accessibility, \ie its technical difficulty for practical use.
This paper presents a unified and general approach to optimality problems in Orlicz spaces.
Orlicz spaces are parametrized by a single convex function and neatly balance the expressivity and accessibility.
We prove a general principle that yields an easily verifiable necessary and sufficient condition for the existence or the non-existence of an optimal Orlicz space in various tasks.
We demonstrate its use in specific problems, including the continuity of Sobolev embeddings and boundedness of integral operators such as the Hardy--Littlewood maximal operator and the Laplace transform.
\end{abstract}

\maketitle

\section{Introduction}

On a high level, many practical tasks can be modelled as an assignment that maps input data to a solution.
For example, an evolution of a system in physics or economics can be captured by a partial differential equation (PDE).
Measured or observed data then produce the prediction of the future behaviour of the system as the solution of this PDE.
One of the key questions is what can be \emph{a priori} said about the \emph{solution's quality} based on the data quality.
Oftentimes, the data and solutions are represented by measurable functions, and their ``quality'' is expressed by certain conditions they obey (measurements they would pass).
The collection of all functions satisfying given conditions is called a \emph{function space}.
Therefore, results of this kind may be abstracted as
\begin{equation} \label{E:TXY}
	T\colon X\to Y,
\end{equation}
where $T$ is the operator mapping the data $f$ to the solution $Tf$, and $X$ and $Y$ are function spaces that describe the quality of the data or the solution, respectively.
Moreover, if the ``quality conditions'' define a structure like a (quasi-)normed space, the boundedness of $T$ carries further information about the transfer of the ``quality'' from data to the solution.

Some data and solution conditions may be more strict than others, which translates to an inclusion (or embedding) of the corresponding function spaces.
Therefore, among all results of the form~\eqref{E:TXY}, some may be better than others.
The ultimate aim is to find the \emph{optimal results}.
For instance, given $X$, one seeks the smallest $Y$ satisfying relation~\eqref{E:TXY}.
This translates to the goal of finding the \emph{best} guaranteed quality of prediction ($Y$), assuming data satisfies given assumptions ($X$).
Analogously, we may ask about the \emph{weakest} assumptions on the data so that any solution achieves the prescribed quality.
This we can model as finding the largest $X$ in relation~\eqref{E:TXY} when $Y$ is fixed.

Optimality problems have always been of interest, and, in the last two decades, they have seen a real boom.
A limited assortment of references is given below in Section~\ref{SS:related-work}.
However, the exposed problem is not yet well defined unless we specify the class of competing function spaces representing the quality conditions.
Here, we need to compromise between the class's \emph{expressivity}, \ie its overall richness and versatility, and its \emph{accessibility}, \ie its complexity and technical difficulty.
The very existence of an optimal function space in a given class is often a highly nontrivial problem.
Considering for example the scale of \emph{Lebesgue $L^p$ spaces}, a simple, well-known one-parameter family of spaces (high accessibility), the problem is that the class is oftentimes too poor to offer optimal results (low expressivity).
In contrast, experience shows that among the so-called \emph{rearrangement-invariant spaces} (\emph{\ri~spaces} for short; also known as \emph{symmetric} or \emph{K\"othe} \emph{spaces}) an optimal space more or less always exists (high expressivity).
This is counterbalanced by the fact that the optimal \ri~space is described in an implicit way and, as such, is of little practical value unless some simplification is available (low accessibility).

This paper presents a \emph{unified and general approach} to optimality problems in \emph{Orlicz spaces}.
Orlicz spaces are parametrized by a single convex function (called \emph{Young function}).
They are fairly easy to understand while offering a lot of flexibility.
The pool of Orlicz spaces is ideally balanced; it is rich enough to cover cases in which Lebesgue spaces have no say, but it is still reasonably accessible for practical use.\looseness=-1

\subsection{General Result}

To illustrate the situation on the solution side, consider a fixed operator~$T$ and assume that $X$ is given.
The optimality problem in Orlicz spaces now reduces to the question of whether there exists the smallest Orlicz space, say $L^B$, containing the image $Y=T[X]$.
For the purpose of this presentation, let us assume that $Y$ is an \ri space\footnote{Later, $Y$ will be the smallest \ri space containing $T[X]$ once its existence is granted.}.

Every \ri~space $Y$ has its \emph{fundamental function}, $\varphi_Y\colon[0,\infty)\to[0,\infty]$, describing the behaviour of norm on indicator functions, \ie $\varphi_Y(t)=\nrm{\chi_E}_Y$ for any set $E$ with measure $t$.
For example, Lebesgue spaces have $\smash{\varphi_{L^p}(t)=t^{1/p}}$ for $p\in[1,\infty)$.
We define the \emph{fundamental level} as the collection of all \ri~spaces sharing the same fundamental function.
There are various function spaces whose fundamental functions coincide (are equivalent in some sense).
For instance, all two-parameter Lorentz spaces $\smash{L^{p,q}}$ obey $\smash{\varphi_{L^{p,q}}=t^{1/p}}$ with any $q\in[1,\infty]$.
However, there is a \emph{unique Orlicz space on each fundamental level}, and an explicit formula connecting the fundamental function and the Young function of the Orlicz space is well known.
Interestingly, this very simple fact is a crucial feature of our theory.

For an~\ri~space $Y$, we call its \emph{fundamental Orlicz space}, denoted by $L(Y)$, the unique Orlicz space with the same fundamental function as $Y$.
One of our most interesting discoveries is that the existence or non-existence of an optimal Orlicz space is fully governed by the \emph{positioning} of $Y$ and its fundamental Orlicz space $L(Y)$.

\begin{mainthm}[the principal alternative]\label{T:intro-principal-alternative-for-spaces}
\phantom{.}
\begin{enumerate}
\item\label{en:PA-spaces-target} Let $Y$ be an \ri space and $L(Y)$ its fundamental Orlicz space.
Then either $Y\subset L(Y)$ and $L(Y)$ is the smallest Orlicz space containing $Y$,
or $Y\not\subset L(Y)$ and no smallest Orlicz space containing $Y$ exists.
\item\label{en:PA-spaces-domain} Let $X$ be an \ri space and $L(X)$ its fundamental Orlicz space.
Then either $L(X)\subset X$ and $L(X)$ is the largest Orlicz space contained in $X$,
or $L(X)\not\subset X$ and no largest Orlicz space contained in $X$ exists.
\end{enumerate}
\end{mainthm}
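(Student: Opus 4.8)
The plan is to prove part~\eqref{en:PA-spaces-target} and then obtain part~\eqref{en:PA-spaces-domain} by an associate-space duality argument. The backbone of both directions is the simple fact, recalled above, that an Orlicz space is determined on its fundamental level: if $Z$ is any Orlicz space with $\varphi_Z = \varphi_Y$, then $Z = L(Y)$ (with equivalent norms). Thus the whole problem is reduced to comparing \emph{fundamental functions} and then upgrading such comparisons to \emph{space inclusions} in the special situation where one of the two spaces is Orlicz.

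For the ``positive'' half of~\eqref{en:PA-spaces-target}, assume $Y \subset L(Y)$. Let $L^B$ be any Orlicz space containing $Y$. Testing the inclusion $Y \hra L^B$ on characteristic functions of sets of measure $t$ gives $\varphi_{L^B}(t) \lesssim \varphi_Y(t) = \varphi_{L(Y)}(t)$ for all admissible $t$. Now I would invoke the standard comparison principle for Orlicz spaces: a pointwise (near-)domination of fundamental functions of two Orlicz spaces forces the corresponding embedding, i.e.\ $\varphi_{L^B} \lesssim \varphi_{L(Y)}$ implies $L(Y) \hra L^B$. (This is where the one-parameter nature of Orlicz spaces is essential: the Young function is recovered from the fundamental function via the known explicit formula, and monotonicity of that correspondence does the job; on spaces of infinite measure one argues separately near $0$ and near $\infty$.) Combined with the hypothesis $Y \subset L(Y)$, this shows $L(Y)$ is the smallest Orlicz space containing $Y$.

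For the ``negative'' half, suppose $Y \not\subset L(Y)$ yet, for contradiction, some smallest Orlicz space $L^B \supseteq Y$ exists. Since $L^B \supseteq Y$, the characteristic-function test again yields $\varphi_{L^B} \lesssim \varphi_Y = \varphi_{L(Y)}$, hence $L(Y) \hra L^B$ by the comparison principle. By minimality of $L^B$ we must have $L^B = L(Y)$, so $Y \subset L(Y)$ — contradicting the assumption. Hence no smallest Orlicz space containing $Y$ exists. Note this half does not even need the genuinely hard construction of an explicit descending chain of Orlicz spaces with no common lower bound; minimality plus the comparison principle suffice to collapse the situation.

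Finally, part~\eqref{en:PA-spaces-domain} follows from part~\eqref{en:PA-spaces-target} by passing to associate spaces. The key ingredients are: the associate of an \ri\ space is again an \ri\ space with $(X')' = X$; the associate of the Orlicz space $L^B$ is the Orlicz space $L^{\tilde B}$ built from the complementary Young function; the relation $\varphi_{X'}(t)\,\varphi_X(t) = t$ (up to equivalence), which shows that taking associates interchanges fundamental levels in an order-reversing way and in particular sends $L(X)$ to $L(X')$; and the order-reversing equivalence $L(X) \subset X \iff X' \subset L(X')$ for inclusions among \ri\ spaces. Applying part~\eqref{en:PA-spaces-target} to the \ri\ space $X'$ gives the dichotomy for the smallest Orlicz space containing $X'$, and dualizing back turns ``smallest containing $X'$'' into ``largest contained in $X$'', with $L(X)$ playing the role of $L(X')'$.

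The main obstacle is the comparison principle used in both halves of~\eqref{en:PA-spaces-target}: one must show rigorously that domination of fundamental functions upgrades to the embedding \emph{for Orlicz targets}, uniformly handling the measure-space dichotomy (finite vs.\ infinite, nonatomic vs.\ atomic) and the distinction between equivalence of Young functions globally versus near $0$ or near $\infty$. Everything else — the characteristic-function testing, the minimality contradiction, and the associate-space bookkeeping in~\eqref{en:PA-spaces-domain} — is then routine.
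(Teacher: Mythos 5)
The ``easy'' half of your argument (if $Y\subset L(Y)$, test on characteristic functions and use that domination of fundamental functions between Orlicz spaces upgrades to an embedding, \cf~\eqref{E:fundamental-inequalities}) is exactly how the paper handles that case. But the ``negative'' half contains a genuine gap, and it sits precisely at the step you declare dispensable. You assume a smallest Orlicz space $L^B\supseteq Y$ exists, deduce $L(Y)\hra L^B$ from the characteristic-function test, and then claim ``by minimality of $L^B$ we must have $L^B=L(Y)$''. Minimality of $L^B$ only says that $L^B$ is contained in every Orlicz space \emph{that contains $Y$}; it gives no information about $L(Y)$ unless $L(Y)\supseteq Y$ --- which is exactly the hypothesis you are denying. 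From $\varphi_{L^B}\lesssim\varphi_Y$ alone you only get the one-sided embedding $L(Y)\hra L^B$, and nothing in your argument rules out $L(Y)\subsetneq L^B$ with $Y\not\subset L(Y)$. So the contradiction never materializes, and your remark that this half ``does not even need the genuinely hard construction'' is the opposite of the truth: this is the part the paper flags as the difficult, remarkable direction.

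What the paper actually uses to close this gap is the set-theoretic identity $M^F=\bigcap\{L^B: M^F\hra L^B\}$ (Theorem~\ref{T:marcinkiewicz-intersection}, obtained by duality from Theorem~\ref{T:lambda-union}, which in turn rests on the Young-type inequality of Proposition~\ref{P:OL-inequality}). With it, the minimality argument becomes legitimate: since $Y\subset M(Y)$, every Orlicz space containing $M(Y)$ contains $Y$, so the putative smallest $L^B$ lies in every such space, hence $L^B\subset M(Y)$ by the intersection theorem; then the fundamental-function comparison (the equivalences~\eqref{E:zesilovace}) forces $L^B\hra L(Y)$, whence $Y\subset L(Y)$, the desired contradiction (this is Theorem~\ref{T:mordor-marcinkiewicz} applied with $\FF=Y$). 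Your duality reduction of part~\ref{en:PA-spaces-domain} to part~\ref{en:PA-spaces-target} is workable in principle (using $(L^B)'=L^{\widetilde B}$, $\varphi_X\varphi_{X'}=t$ and $(X')'=X$), though the paper instead proves the domain side directly via the union theorem for $\Lambda(X)$; either way, the hard analytic input of Section~\ref{S:unions} cannot be avoided, and your proposal as written is missing it.
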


Note that the fact that the smallest Orlicz space containing $Y$ exists when $Y\subset L(Y)$ is easy.
The difficult part of the theorem is that, remarkably, the converse is also true.
Analogous considerations also hold for the domain side~\ref{en:PA-spaces-domain}.

\subsection{Applications to Sobolev Embeddings}

We illustrate the possible use of Theorem~\ref{T:intro-principal-alternative-for-spaces} on Sobolev embeddings.
In its simplest form, first-order Sobolev embedding corresponds to the solution operator $T$ mapping a given data function $f$ to the weakly differentiable solution $u$ of the equation
\begin{equation} \label{E:nabla-u-f}
	\nabla u = f
		\quad\text{on $\Omega$}
		\quad\text{and}\quad
		u = 0
		\quad\text{on $\R^n\setminus\Omega$},
\end{equation}
where $\Omega$ is a fixed open connected set in $\R^n$.
The natural domain $\DD$ of $T$ consists of all locally integrable functions $f$ on $\Omega$ for which a weakly differentiable solution $u$ of equation~$\eqref{E:nabla-u-f}$ uniquely exists.
The transfer of regularity for equation~\eqref{E:nabla-u-f} then reads as
\begin{equation} \label{E:nabla-u-f-T}
	T\colon \DD\cap X(\Omega) \to Y(\Omega),
\end{equation}
where $X$ and $Y$ are some function spaces.
Here, relation~\eqref{E:nabla-u-f-T} tells us what can be generally said about a function $u$ when some information about its gradient $\nabla u$ is available.
This particular scenario is typically formulated as the embedding
\begin{equation} \label{E:Sob-V1-0}
	V^1_0 X(\Omega) \hra Y(\Omega),
\end{equation}
where $V^1_0 X(\Omega)$ is so-called first-order Sobolev space defined as a collection of weakly-differentiable functions $u$ satisfying $\abs{\nabla u}\in X(\Omega)$ and $u=0$ outside $\Omega$.
Assuming that $X$ is a Banach space, we equip the Sobolev space $V^1_0 X(\Omega)$ with the norm $\smash{\nrm{u}_{V^1_0 X}=\nrm{\nabla u}_X}$.
Formulation~\eqref{E:Sob-V1-0} bypasses the question of existence and uniqueness of solutions of equation~\eqref{E:nabla-u-f} and focuses on the problem of regularity transfer from $\nabla u$ to $u$.
Therefore, we stick to this notation in the rest of this section.

We give an overview of first-order Sobolev embeddings within the classes of Lebesgue, Orlicz and rearrangement-invariant spaces ranging from long-standing classical results to new ones.
The principle alternative allows us to easily reconstruct all known solutions to optimality problems in Orlicz spaces without methods tailored to each special case.
We will focus on illustrating the above-mentioned expressivity and accessibility of the discussed classes and we shall exhibit and stress the power of the underlying principal alternative.
The details and exhaustive treatment of general Sobolev embeddings of arbitrary order and without zero boundary condition are collected in Section~\ref{S:Principal-alternative-for-sobolev-embeddings}.

\paragraph{Optimal Targets}

Let us fix the domain space $X=L^p$ with $p\in[1,n]$.
In the subcritical case $p<n$, the classical Sobolev inequality asserts that
\begin{equation} \label{E:classical-sobolev-embedding}
    V^1_0 L^p(\Omega) \hra L^{\frac{np}{n-p}}(\Omega),
\end{equation}
\cf \eg~\citep{Ada:75,Ada:03,Maz:11}.
Simple counterexamples show that one cannot replace $\cramped{L^\frac{np}{n-p}(\Omega)}$ in~\eqref{E:classical-sobolev-embedding} by any essentially smaller Lebesgue space without losing the validity of the embedding.
In other words, $\smash{L^{\frac{np}{n-p}}(\Omega)}$ is the otimal Lebesgue target space in embedding~\eqref{E:classical-sobolev-embedding}.
On the other hand, if $p=n$, there is no smallest Lebesgue space $L^q(\Omega)$ such that
\begin{equation}\label{E:classical-sobolev-embedding-limiting}
    V^1_0 L^{n}(\Omega) \hra L^{q}(\Omega).
\end{equation}
Indeed, embedding~\eqref{E:classical-sobolev-embedding-limiting} holds for every $q\in[1,\infty)$, but not for $q=\infty$ itself, \cf \eg~\citep{Ada:75,Ada:03,Maz:11}.
We see that Lebesgue spaces are not rich enough to capture the optimality or, in other words, their expressivity is insufficient.

Let us look at these embeddings when we enrich the pool of competing spaces with the Orlicz spaces.
In the case $p<n$, it turns out that the space $\smash{\cramped{L^{\frac{np}{n-p}}(\Omega)}}$ in~\eqref{E:classical-sobolev-embedding} is smallest also among all Orlicz spaces,
see \eg~\citep{Cia:96}.
More importantly, the class of Orlicz spaces has sufficient expressivity to solve the optimality problem also in the limiting case $p=n$.
Indeed, the Trudinger inequality yields
\begin{equation} \label{E:classical-sobolev-embedding-limiting-Orlicz-concrete}
    V^1_0 L^n(\Omega) \hra \exp L^{\frac{n}{n-1}}(\Omega),
\end{equation}
where the target is an Orlicz space whose Young function is equivalent to $\smash{\cramped{e^{t^{\frac{n}{n-1}}}}}$ near infinity, \cf \eg~\citep{Tru:67,Poh:65,Yud:61}.
The space $\exp L^{\frac{n}{n-1}}(\Omega)$ is then the smallest among all Orlicz spaces as shown by~\citet{Hem:70} or~\citet{Cia:96}.

Finally, we consider these embeddings using the even much larger and more abstract family of so-called rearrangement-invariant spaces,
and we demonstrate how to obtain optimality in Orlicz spaces using the principal alternative.
The sub-limiting embedding~\eqref{E:classical-sobolev-embedding} may be improved to the optimal one
\begin{equation} \label{E:classical-sobolev-embedding-lorentz}
    V^1_0 L^p(\Omega) \hra L^{\frac{np}{n-p},\,p}(\Omega),
\end{equation}
where, on the target side, we are dealing with the two-parameter Lorentz space, \cf~\citep[Example~7(1)]{Edm:00}.
For just the embedding without optimality, see \eg~\citep{One:63,Pee:66,Hun:66}.
Now, since $\smash{\cramped{L^{\frac{np}{n-p},\,p}(\Omega)\hra L^{\frac{np}{n-p}}(\Omega)}}$, the latter is the Orlicz space, and they are both on the same fundamental level, principal alternative implies the Orlicz optimality of the target in~\eqref{E:classical-sobolev-embedding}.

Analogously, in the limiting case $p=n$, embedding~\eqref{E:classical-sobolev-embedding-limiting-Orlicz-concrete} can be further improved when allowing \ri target spaces by
\begin{equation}\label{E:bw}
    V^1_0 L^{n}(\Omega) \hra L^{\infty,\,n;\,-1}(\Omega),
\end{equation}
in which the right hand side stands for a particular instance of a Lorentz-Zygmund space~\citep{Maz:11,Han:79,Bre:80}, see also~\citep{Bru:79} and~\citep{Cwi:00}.
The fundamental Orlicz space of $\smash{\cramped{L^{\infty,n;-1}(\Omega)}}$ coincides with the exponential space $\smash{\cramped{\exp L^{\frac{n}{n-1}}(\Omega)}}$.
Since $\smash{\cramped{L^{\infty,n;-1}(\Omega)\hra \exp L^{\frac{n}{n-m}}(\Omega)}}$, the optimality of the Orlicz target in~\eqref{E:classical-sobolev-embedding-limiting-Orlicz-concrete} follows by our principal alternative.

From a general point of view, \citet{Cia:96} proved that to every Orlicz space $\smash{L^A(\Omega)}$, there always exits the optimal Orlicz space $\smash{L^B(\Omega)}$ satisfying
\begin{equation} \label{E:sob-orlicz}
    V^1_0 L^A (\Omega) \hra L^{B}(\Omega).
\end{equation}
This suggests that Orlicz spaces have high enough expressivity for this task.

\paragraph{Optimal Domains}

Let us turn our attention to optimal domains.
In subcritical embedding~\eqref{E:classical-sobolev-embedding}, the domain space $L^p(\Omega)$ is optimal within Lebesgue and also Orlicz spaces, see \eg~\citep[Example~5.1]{Mus:16}.
In our scheme, we may argue that the optimal \ri space is $\smash{\cramped{L^{p,\frac{np}{n-p}}(\Omega)}}$, see \eg~\citep{Edm:00}.
Its fundamental Orlicz space is $L^p(\Omega)$ and since it is smaller than $\smash{\cramped{L^{p,\frac{np}{n-p}}(\Omega)}}$, it is the optimal one by the principal alternative.

In the limiting cases, the behaviour changes drastically. The optimal \ri domain space in
\begin{equation}\label{E:embedding-into-ell-infty}
    V^1_0 X(\Omega) \hra L^{\infty}(\Omega)
\end{equation}
happens to be $X=L^{n,\,1}$, see \eg~\citep{Ste:81} for the embedding and~\citep{Edm:00} for the optimality.
It follows immediately from the principal alternative that no optimal domain Orlicz space exists since the fundamental Orlicz space of $L^{n,\,1}$ is $L^n$, which is strictly larger.
Note that the non-existence of the optimal Orlicz space, in this case, was shown before by \citet[Theorem~6.4]{CiP:98}.
However, compared to the versatile nature of the principal alternative, the original proof requires a non-trivial construction designed specifically for dealing with the target space~$L^\infty$.

Similarly, in embedding~\eqref{E:classical-sobolev-embedding-limiting-Orlicz-concrete}, no optimal domain Orlicz space exists, although $L^{n}$ is the optimal domain Lebesgue space.
This was first proved by another explicit construction, which was developed by Kerman and Pick, and appears in~\citep{Pic:98} and \citep{Pic:02}.
Once again, an alternative proof is available via comparing the optimal \ri domain $X$, see \eg~\cite[Example~7(3)]{Edm:00}, with its fundamental Orlicz space $L(X)$ and using the principal alternative.

We see that, compared to the target side, the situation on the domain side is more delicate, and Orlicz spaces do not suffice for capturing the optimality.
In general, to any given weak Orlicz (also called Marcinkiewicz) space, a characterisation of the existence of the optimal domain Orlicz space is given by~\citet{Mus:16}, which generalizes the aforementioned constructions of~\citet{CiP:98} and \citet{Pic:98}.
Relying on this result, necessary and sufficient conditions for the existence of $L^A$ in embedding~\eqref{E:sob-orlicz} for any Orlicz target space $L^B(\Omega)$ were given by~\citet{Cia:19}.
In our Theorem~\ref{T:sobolev-to-orlicz}, we present a much easier and more comprehensible approach.
We show that the fundamental Orlicz of an optimal \ri domain depends only on the fundamental level of the target; see Corollary~\ref{C:fundamental-domain-equality}.
The proof then pops out immediately from the principal alternative and known reduction principles.\looseness=-1

Returning back to our examples, let us examine the last case of limiting embedding~\eqref{E:bw}.
Specifically, it has not been known whether there exists a largest Orlicz space $L^A(\Omega)$ such that
\begin{equation} \label{E:bw-orlicz}
    V^1_0 L^A(\Omega) \hra L^{\infty,\,n;\,-1}(\Omega).
\end{equation}
As the space $\smash{\cramped{L^{\infty,n;-1}(\Omega)}}$ is neither an Orlicz nor a weak Orlicz space, none of the existing results in the literature give us a conclusive answer.
However, the principal alternative allows us to solve this long-standing open problem.
We provide a proof of a far stronger Theorem~\ref{T:nonexistence-optimal-orlicz-on-level} asserting that if no optimal domain Orlicz space exists for some target weak Orlicz space $Y$, then there is no optimal domain Orlicz space also for any target space on the fundamental level of $Y$.
Finally, the spaces $\smash{\cramped{L^{\infty,n;-1}(\Omega)}}$ and $\smash{\cramped{\exp L^{\frac{n}{n-1}}(\Omega)}}$ are on the same fundamental level and the latter is a weak Orlicz space for which no optimal Orlicz domain space in~\eqref{E:classical-sobolev-embedding-limiting-Orlicz-concrete} exists, as we know due to the earlier constructions mentioned above.
Consequently, no optimal Orlicz space $L^A$ in embedding~\eqref{E:bw-orlicz} exists either.

\subsection{Applications to Classical Operators}

Apart from Sobolev embeddings, the question of the existence of an optimal Orlicz space was open for several important operators such as the Hardy--Littlewood maximal operator, the Laplace transform, the Riesz potential, the Hilbert transform or the Fourier transform.
The optimal \ri~spaces are known in most of these cases, see~\citep{Edm:20,Bur:17}, except perhaps for the Fourier transform.
A notable exception from this list seems to be the fractional maximal operator for which the optimal Orlicz spaces have been fully characterized by~\citet{Mus:19}.
In Section~\ref{S:principal-alternative-for-operators}, we shall provide a complete characterisation of the existence of optimal Orlicz target space for the Hardy-Littlewood Maximal operator with a fixed Orlicz domain.
In cases when the optimal target exists, we provide an explicit description of the relevant Young functions.
We shall also present some progress towards a satisfactory characterisation of the existence of optimal target and domain Orlicz spaces for the Laplace transform.

\subsection{Related work}\label{SS:related-work}

There is a vast amount of literature devoted to the optimality of function spaces in various tasks and applications and a comprehensive survey is out of the scope of this paper.
Instead, we mention a few examples that are close to our contribution.

The classical paper of~\citet{Cal:66} treats optimal pairs of spaces from the interpolation point of view.
The celebrated paper of \citet{Tru:67} (the result can also be traced in the works of \citet{Yud:61} and \citet{Poh:65}) solved the problem of the nonexistence of optimal Lebesgue target space in limiting Sobolev embedding by finding an appropriate target space in the broader family of Orlicz spaces.
In a follow-up paper, \citet*{Hem:70} showed that, among Orlicz spaces, the target space is optimal.
Trudinger's result has been refined in many ways, and simple proofs were found, probably the most notable one by \citet{Str:71}.

An important breakthrough in the optimality direction was achieved by \citet{Mos:70} who quantified Trudinger's exponential inequality, found the optimal constant and proved that it is attained. An analogous result for higher-order embeddings was obtained by~\citet{Ada:88}.
In connection with the study of quantum fields and hypercontractivity semigroups, extensions of the classical Sobolev embeddings to the setting when the underlying measure space is infinite-dimensional have been investigated by \citet{Gro:75} who showed that in the case of Gaussian measure, a dimension-free embedding can be obtained involving a logarithmic amendment on the side of the target space.
This paper demonstrated once again the power of Orlicz spaces and opened new paths of vast research, part of which produced simple proofs and optimality results of all kinds, see \eg~the work of \citet{Tal:76} or \citet*{Pel:93}, references therein and follow-up papers.

A very interesting approach to optimal pairs of norms was taken by \citet{Ker:79}.
In the 1990s, the importance of the study of optimality in the specific context of Orlicz spaces started resurfacing mainly thanks to the efforts of Cianchi, see \eg his classical work~\citep{Cia:96}.
By the outbreak of the 21st century, optimal function spaces started appearing in broader contexts, see for example the series of papers on optimal growth envelopes by Caetano, Haroske, Triebel and others, see \eg~\citep{Cae:01}, its references and follow-up papers.

The boom of optimality results in the last two decades can be illustrated by many works, let us briefly mention some of them.
Curbera and Ricker produced a series of papers on optimality in the context of rearrangement-invariant spaces, see \eg~\citep{Cur:02}.
\citet{Cia:04} studied optimality of function spaces in Orlicz--Sobolev embeddings.
\citet*{Cia:12} sought fine improvements of Sobolev embeddings using optimal remainder gradient norms.
\citet*{Bre:21} studied sharp embeddings for Sobolev spaces involving symmetric gradients.
Optimal Calderón spaces for generalized Bessel potentials were nailed down by \citet*{Bak:21}.
Optimal embeddings of Calder\'on spaces in H\"{o}lder--Zygmund spaces were established by \citet*{Bas:14}.
Optimal Sobolev-type inequalities in the very important class of Lorentz spaces were studied by \citet*{Cas:13}.
Optimal rearrangement-invariant Sobolev embeddings in mixed norm spaces were established by \citet{Cla:16}.
Applications of optimality problems to sharp regularity estimates for PDEs were studied by \citet*{daS:18}, see also the work of \citet*{Sal:15}.
Certain optimal limiting embeddings for reduced Sobolev spaces were obtained by \citet{Fon:14}.
Optimal mapping properties of the fractional integral operators, Fourier integral operators and the $k$-plane transforms on rearrangement-invariant quasi-Banach function spaces were studied by \citet{HoK:21}.
Optimal Sobolev--Lorentz embeddings involving mean oscillation were treated by \citet{Iok:14}.
The optimal behaviour of the Fourier transform and the convolution operator on compact groups was considered by \citet*{Kum:20}.
\citet{Mih:21} was able to extend the existing work on the optimality of Sobolev embeddings on bounded domains to the case of entire Euclidean space, augmenting thereby existing results by \citet{Vyb:07} and \citet*{Alb:18}.
The optimal performance of Fourier multipliers on Lebesgue spaces was studied by \citet*{Moc:10}.
Optimal local embeddings of Besov spaces were under certain restrictions established by~\citet*{Nev:20}.
The recent work of \citet{Pes:22} brings a new point of view on classical Wiener amalgam spaces by combining it with the idea of symmetrization.

This is just a very short exhibit of the important and dynamically developing field, and the list is by no means complete.
Further results, namely those which intimately concern our work, will be mentioned later, mainly in the introduction to Section~\ref{S:Principal-alternative-for-sobolev-embeddings}.

\subsection{How to Read the Paper}

In Section~\ref{S:unions}, we study unions of Orlicz spaces contained in a given space and intersections of Orlicz spaces containing a given space.
In particular, we prove that in the case of strong and weak Orlicz spaces (also known as Lorentz and Marcinkiewicz endpoint spaces) these sets coincide.
This is in fact a certain generalization of similar a result of~\citet{MZP}, but it is used later as a crucial ingredient in the proof of the main results.
Some serious technical obstacles are overcome in the same section, in particular, we establish a new Young-type inequality in Proposition~\ref{P:OL-inequality}.
We focus on working with as general structures as possible, in particular, we consider quasi-convex functions in place of Young ones.

Section~\ref{S:principal-alternative} is devoted to the proof of Theorem~\ref{T:intro-principal-alternative-for-spaces}.
Section~\ref{S:Principal-alternative-for-sobolev-embeddings} has a specification of the principal alternative to Sobolev-type embeddings and several of its consequences.
Section~\ref{S:principal-alternative-for-operators} is then devoted to applications of the principal alternative to the Hardy--Littlewood maximal operator and to the Laplace transform.

The paper is finished with an Appendix dedicated to a~detailed study of the so-called \emph{absolutely continuous} or \emph{almost compact} embeddings between function spaces.
Such relations have been proved very useful for studying the compactness of operators and embeddings, see \eg~\citep{Sla:12,Fer:10}.
Roughly speaking, compactness follows from the combination of almost compactness with compactness in measure.
This material was obtained as a certain side effect of the mainstream of our work, but it is definitely of independent interest.
We push further the knowledge on the relations of endpoint spaces to unions and intersections of Orlicz spaces mentioned above.
We introduce the notion of (\emph{uniform}) \emph{sub-diagonality} of an \ri~space, meaning that it coincides with the union of Orlicz spaces embedded into it (almost compactly).
We provide a sufficient condition on a Young function $A$ such that for the Lorentz endpoint space $X$ satisfying $L(X)=L^A$, the space $X$ is uniformly sub-diagonal.
We then establish certain lifting principle which enables us to transfer (uniform) sub-diagonality along scales of function spaces.
The combination of these results allows us to provide conditions on (uniform) sub-diagonality on various scales of function spaces.
In particular, this idea allows us to obtain previously unknown useful relations between classical Lorentz spaces (in particular $L^{p,q}$ spaces) and Orlicz spaces.
The reason for having collected this material in an Appendix is that it leads to a different direction of research than the main results of the paper.

We do not have a preliminary section, all the important notions are introduced at the beginning of the sections in which they appear for the first time.

\section{Unions and Intersections of Orlicz Spaces}\label{S:unions}

Our main aim in this section is to establish key relations between function spaces, with particular emphasis on the set-wise identities between (Lorentz/Marcinkiewicz) endpoint spaces and unions/intersections of Orlicz spaces.
Throughout the section, we work with spaces of real measurable functions defined on the real half-line $(0,\infty)$ equipped with the Lebesgue measure.
This does not present any significant restriction of the theory as the results would remain valid for function spaces containing real functions defined on any $\sigma$-finite measure space.
The details of such extension are discussed later in Section~\ref{S:principal-alternative}.

We will denote by $\measurable$ the (non-linear) space of all measurable
functions $f\colon (0,\infty)\to[-\infty,\infty]$. By $\measurablep$ and
$\aefinite$ we denote the collections of all non-negative functions from
$\measurable$ and of all almost everywhere finite functions from
$\measurable$, respectively. If $E\subset (0,\infty)$ is measurable, we denote
by $\abs{E}$ its Lebesgue measure.

We first recall technical operations on functions of one variable, such as generalized inverses and rearrangements.
We then fix the notation of quasi-convex and Young functions which we use to define the Orlicz spaces.
We shall also define ``strong'' and ``weak'' variants of Orlicz spaces and recall basic relations between them.
For details, we refer to \citep{Rao:91} and \citep{BS}.

\paragraph{Inverses and the correlative function}

Let $F\colon[0,\infty]\to[0,\infty]$ be a non-decreasing function.
The \emph{right-continuous inverse} of $F$ is defined by
\begin{equation} \label{E:def-rc}
	F^{-1}(t)
		= \sup\set{\tau\ge 0: F(\tau)\le t}
	\quad\text{for $t\in[0,\infty]$}.
\end{equation}
We have the trivial inequality
\begin{equation} \label{E:comp-rc}
	t\le F^{-1}\bigl(F(t)\bigr)
	\quad\text{for $t\in[0,\infty]$}.
\end{equation}
For a non-decreasing function $G\colon[0,\infty]\to[0,\infty]$,
its \emph{left-continuous inverse} is given as
\begin{equation} \label{E:def-lc}
	G^{-1}(t)
		= \inf\set{\tau\ge 0: G(\tau)\ge t}
	\quad\text{for $t\in[0,\infty]$}
\end{equation}
and obeys
\begin{equation} \label{E:comp-lc}
	G^{-1}\bigl(G(t)\bigr) \le t
	\quad\text{for $t\in[0,\infty]$}.
\end{equation}
Note that we do not introduce any notation to distinguish the
left or right continuity of an inverse; instead we always mention it in the
text explicitly.

Let $F\colon[0,\infty]\to[0,\infty]$.
We define its~\emph{correlative function} by
\begin{equation} \label{E:def-correlative-function}
	F_\#(t) = \frac{1}{F(\tfrac{1}{t})}
		\quad\text{for $t\in[0,\infty]$.}
\end{equation}
The symbol $F_\#^{-1}$ abbreviates the correlative function of a right-continuous
inverse, \ie $F_\#^{-1}=(F^{-1})_\#$.  Note that if $F$ is strictly
increasing and surjective, then $(F^{-1})_\#=(F_{\#})^{-1}$ where both the
inverses are taken in the classical sense.

\paragraph{The distribution function and the non-increasing rearrangement}

Given $f\in\measurable$, its \emph{distribution function}, $f_*$, is defined as
\begin{equation} \label{E:def-distribution}
	f_*(\lambda) = \abs{\set{\tau\in(0,\infty): \abs{f(\tau)}>\lambda}}
		\quad\text{for $\lambda\in[0,\infty]$}.
\end{equation}
The \emph{non-increasing rearrangement}, $f^*$, of $f$ is given by
\begin{equation} \label{E:def-rearrangement}
	f^*(t) = \inf\set{\lambda\ge 0: f_*(\lambda) \le t}
		\quad\text{for $t\in[0,\infty]$}.
\end{equation}
Both $f_*$ and $f^*$ are non-increasing, right-continuous
and $(f^*)_*=f_*$ on $[0,\infty]$.

For $f\in\measurable$, the \emph{elementary maximal function} $f^{**}$
is defined as
\begin{equation} \label{E:def-maximal}
	f^{**}(t) = \frac{1}{t} \int_0^t f^{*}(\tau)\dd\tau
		\quad\text{for $t\in(0,\infty)$.}
\end{equation}
The function $f^{**}$ is non-increasing on $(0,\infty)$ and for every
$f,g\in\measurable$, one has
\begin{equation} \label{E:maximal-triangle}
	(\abs{f}+\abs{g})^{**} \le f^{**} + g^{**}
		\quad\text{on $(0,\infty)$.}
\end{equation}

\paragraph{Relations between scalar quantities}

We write $L\lesssim R$ if $L\le cR$ holds for some constant $c>0$ independent of quantities involved in $L$ and $R$.
If both $L\lesssim R$ and $R\lesssim L$ hold, we write $L\approx R$ and say that $L$ and $R$ are \emph{equivalent}.
When $L$ and $R$ are functions of a real variable $t$,
we write $L\prec R$, if there is some $K>0$ such that $L(t)\le R(Kt)$.
If simultaneously $L\prec R$ and $R\prec L$, we write $L\sim R$.

Furthermore, we use the terminology \emph{near infinity}, \emph{near zero} and \emph{globally} to indicate that any of the properties holds for $t\ge t_0$, resp.\ $0\le t\le t_0$, with some $t_0\in(0,\infty)$, resp.\ for all $t\ge 0$.
If not specified, we assume the global variant.

\paragraph{Quasi-convex and Young functions}

A function $A\colon [0,\infty] \to [0,\infty]$ is called \emph{quasi-convex} if it is left-continuous, not identically zero or infinity on $(0,\infty]$, $A(0)=0$, and $t\mapsto A(t)/t$ is non-decreasing on $(0,\infty)$.
If, moreover, $A$ is convex on $[0,\infty]$, then it is called a \emph{Young function}.
A non-decreasing function $A\colon[0,\infty]\to[0,\infty]$ is quasi-convex if and only if $A_\#$ is quasi-convex.

We note that $A$ is allowed to attain infinite value, which is the only case in which the assumption of the left continuity is relevant.
Every Young function $A$ can be uniquely expressed as
\begin{equation}\label{E:Young-as-integral}
	A(t) = \int_0^t a(\tau) \dd\tau
		\quad \text{for $t\in[0,\infty]$},
\end{equation}
where $a$ is a non-decreasing right-continuous function that will be called the \emph{right-continuous derivative} of $A$.
We also adopt the notation $A'=a$.
On the interval where $A$ is finite, one has $A'=a$ \ae in the classical sense.
It holds that
\begin{equation} \label{E:Young-trivial}
	A(t)\le ta(t) \le A(2t)
		\quad\text{for $t\in[0,\infty]$}.
\end{equation}
It follows from \eqref{E:Young-as-integral} and the monotonicity of $a$ that every Young function is quasi-convex. Conversely, if $B$ is quasi-convex, then the function $A$, given by
\begin{equation}\label{E:Young-from-quasiconvex}
	A(t) = \int_0^t \frac{B(\tau)}{\tau} \dd\tau
		\quad\text{for $t\in[0,\infty]$,}
\end{equation}
is a Young function that satisfies
\begin{equation}\label{E:Young-basic-equivalence}
	A(t)\le B(t) \le A(2t)
		\quad\text{for $t\in[0,\infty]$.}
\end{equation}
A quasi-convex function $A$ is said to satisfy \emph{the $\Delta_2$ condition} if $A(2t)\le CA(t)$ for some $C>0$ and all~$t\ge 0$.

The \emph{complementary function} $\widetilde A$ of a quasi-convex
function $A$ is given by
\begin{equation} \label{E:conjugate}
	\widetilde{A}(t) = \sup\set{\tau t - A(\tau): \tau>0}
		\quad\text{for $t\in[0,\infty]$.}
\end{equation}
An immediate consequence of the definition is the so-called \emph{Young's inequality}
\begin{equation}\label{E:Young-inequality}
	\tau t\le A(\tau) + \widetilde{A}(t)
		\quad\text{for $\tau,t\in[0,\infty]$.}
\end{equation}
If $A$ is a Young function, then so is $\widetilde A$.
If $A$ moreover enjoys representation~\eqref{E:Young-as-integral}, then
\begin{equation} \label{E:Young-conjugate-as-integral}
	\widetilde A(t) = \int_0^t a^{-1}(\tau) \dd\tau
		\quad \text{for $t\in[0,\infty]$},
\end{equation}
where $a^{-1}$ is the right-continuous inverse of $a$.

If $A$ is a quasi-convex function and $A^{-1}$ is its right-continuous inverse, then
\begin{equation}\label{E:upper-bound-right-continuous-inverse}
    A(A^{-1}(t)) \le t \quad \text{for every $t\in[0,\infty]$.}
\end{equation}
Furthermore, Young function $A$ and its complementary function $\widetilde A$ obey
\begin{equation}\label{E:upper-bound-for-product}
    t\le A^{-1}(t)\widetilde A^{-1}(t) \le 2t
		\quad\text{for every $t\in(0,\infty)$,}
\end{equation}
where both inverses are right-continuous.
For quasi-convex functions $A$ and $B$, it holds that
\begin{equation} \label{E:domination}
	B\prec A
	\quad\text{if and only if}\quad
	\widetilde A\prec\widetilde B
	\quad\text{if and only if}\quad
	A_\#\prec B_\#.
\end{equation}

\paragraph{Orlicz spaces}

For a quasi-convex function $A$, the \emph{Orlicz semi-modular}
$\varrho_A\colon\measurable\to[0,\infty]$ is defined by
\begin{equation*}
	\varrho_A(f) = \int_0^\infty A(\abs{f})
	\quad\text{for $f\in\measurable$.}
\end{equation*}
The collection $L^A=\set{f\in\measurable:\varrho_A(f/\lambda)\le 1\;\text{for
some $\lambda>0$}}$ is called the \emph{Orlicz space}, and the functional
$\nrm{\cdot}_{L^A}\colon\measurable\to[0,\infty]$, given by
\begin{equation}\label{E:luxemburg-norm}
	\nrm{f}_{L^A} = \inf\set{\lambda>0: \varrho_A(f/\lambda)\le 1},
\end{equation}
is called the \emph{Luxemburg norm}.
It is well known (\cf \eg \citep{Rao:91}) that if $A$ is a Young function, then the functional $\nrm{\cdot}_{L^A}$ is a norm on $L^A$, one has $L^A=\set{f\in\measurable: \nrm{f}_{L^A}<\infty}$, and, $L^A$ endowed with the Luxemburg norm is a Banach space.
If $B$ is only quasi-convex, then $\varrho_B$ and $\nrm{\cdot}_{L^B}$ are well defined, but $\nrm{\cdot}_{L^B}$ is not necessarily a norm.
However, if $A$ is the Young function corresponding to $B$ in the sense of~\eqref{E:Young-from-quasiconvex}, then~\eqref{E:Young-basic-equivalence} immediately yields
\begin{equation} \label{E:modular-sandwich}
	\varrho_A(f) \le \varrho_B(f) \le \varrho_A(2f)
		\quad\text{for $f\in\measurable$,}
\end{equation}
which in turn leads to
\begin{equation} \label{E:norm-sandwich}
	\nrm{f}_{L^A}
		\le \nrm{f}_{L^B}
		\le 2\nrm{f}_{L^A}
		\quad\text{for $f\in\measurable$,}
\end{equation}
hence $L^B$ is equivalently normable with equivalence constants 1 and 2.

For a quasi-convex $A$ and $f\in\measurable$, it holds that
\begin{equation} \label{E:Orlicz-balls}
	\varrho_A(f) \le 1
		\quad\text{if and only if}\quad
	\nrm{f}_{L^A} \le 1
\end{equation}
and
\begin{equation} \label{E:Orlicz-ball}
	\varrho_A(f/\nrm{f}_{L^A}) \le 1.
\end{equation}

On taking $A(t)=t^p$, $t\in[0,\infty]$, for some $p\in[1,\infty)$ one obtains $L^A=L^p$, recovering the classical Lebesgue space.
If one defines $A=0$ on $[0,1]$ and $A=\infty$ on $(1,\infty]$, then $L^A=L^\infty$.
Semi-modulars are further discussed in Appendix~\ref{S:unioins-of-orlicz-spaces}.

\paragraph{Strong and weak Orlicz spaces}

Let $A$ be a quasi-convex function. We define the functional
\begin{equation} \label{E:LambdaA-def}
	\nrm{f}_{\Lambda^A}
		= \int_0^\infty A^{-1}_\#(f_*)
	\quad\text{for $f\in\measurable$}.
\end{equation}
The \emph{strong Orlicz space}, also called \emph{Lorentz space}, $\Lambda^A$ is now defined as
\begin{equation}\label{E:MA-def}
	\Lambda^A = \set{f\in\measurable: \nrm{f}_{\Lambda_A}<\infty}.
\end{equation}
The space $\Lambda^A$ is linear.
If $A^{-1}_\#$ is concave, then $\nrm{\cdot}_{\Lambda^A}$ is a norm and $\Lambda^A$ is a Banach space.
Even if $A^{-1}_\#$ is not concave, there exists a concave function $\varphi$ such that $A^{-1}_\#\le\varphi\le2A^{-1}_\#$ on $(0,\infty)$ and $\Lambda^A$ can be equivalently renormed to become a Banach space, see Lemma~\ref{L:LambdaE-G}.

For a quasi-convex function $A$, we define the functional
\begin{equation*}
	\nrm{f}_{M^A} = \sup_{t\in(0,\infty)} A^{-1}_\#(t) f^{**}(t)
		\quad\text{for $f\in\measurable$,}
\end{equation*}
and the \emph{weak Orlicz space} or \emph{Marcinkiewicz space} $M^A$ by
\begin{equation*}
	M^A = \set{ f\in\measurable: \nrm{f}_{M^A}<\infty }.
\end{equation*}
It follows from inequality~\eqref{E:maximal-triangle} that
$\nrm{\cdot}_{M^A}$ is a norm and $M^A$ is a Banach space.

For the purposes of this section, we will denote by $\LLM$ the set of all classical, strong and weak Orlicz spaces generated by quasi-convex functions.

\paragraph{Embeddings}

For Banach spaces $X$ and $Y$, we write $X\hra Y$ if there exists $C>0$ such that $\nrm{f}_{Y}\le C\nrm{f}_X$ for every $f\in X$.
We recall that for $X,Y\in\LLM$ it holds that $X\subset Y$ if and only if $X\hra Y$.

Let $A$ be a quasi-convex function. Then we have the fundamental embeddings
\begin{equation} \label{E:sandwich}
	\Lambda^A \hra L^A \hra M^A
\end{equation}
which explain the meaning of ``weak'' and ``strong'' Orlicz spaces.
Moreover embeddings~\eqref{E:sandwich} are tight in the sense that the norms are indistinguishable on characteristic functions.
That is, for every $E\subset(0,\infty)$ measurable, one has the \emph{fundamental relation}
\begin{equation} \label{E:fundamental}
	\nrm{\chi_E}_{\Lambda^A}
		= \nrm{\chi_E}_{L^A}
		= \nrm{\chi_E}_{M^A}
		= A^{-1}_\#(\abs{E}).
\end{equation}

The embeddings of two (strong/classical/weak) Orlicz spaces are governed by relations of their quasi-convex functions.
Namely, for quasi-convex functions $A$ and $B$, we have that
\begin{equation} \label{E:embeddings}
	\Lambda^A \hra \Lambda^B
	\quad\text{iff}\quad
	L^A\hra L^B
	\quad\text{iff}\quad
	M^A \hra M^B
	\quad\text{iff}\quad
	B\prec A.
\end{equation}

\paragraph{Associate spaces}

For $X\in \set{L,\Lambda,M}$, we define
\begin{equation} \label{E:associate}
	\nrm{f}_{X'} = \sup\set*{ \int_0^\infty fg: \nrm{g}_X\le 1}
		\quad\text{for $f\in\measurable$,}
\end{equation}
and $X'=\set{f\in\measurable\colon \nrm{f}_{X'}<\infty}$. The space $X'$ is called the \emph{associate space} of $X$.
For a quasi-convex function $A$, it holds that
\begin{equation} \label{E:duality}
	\bigl(L^A\bigr)'=L^{\widetilde{A}}
		\quad\text{and}\quad
	\bigl(\Lambda^A\bigr)'=M^{\widetilde{A}}
		\quad\text{and}\quad
	\bigl(M^A\bigr)'=\Lambda^{\widetilde{A}}.
\end{equation}
Here the identity of function spaces means that they coincide in the set-theoretical sense and, moreover, their norms are equivalent up to absolute constants.
Using \eqref{E:embeddings}, \eqref{E:duality} and \eqref{E:domination}, it follows that for $X,Y \in\LLM$, one has $X\hra Y$ if and only if $Y'\hra X'$.
Associate spaces are addressed in a broader sense in Section~\ref{S:principal-alternative} below.

\medskip

The goal of this section is to show that an arbitrary strong Orlicz space, \ie a Lorentz endpoint space $\Lambda^E$ coincides with the union of Orlicz spaces embedded into $\Lambda^E$.
This results has been known in a special case when $E$ is a so-called $N$-function satisfying the $\Delta_2$ condition, see~\citep[Section~9]{MZP}.
We shall show that one only needs to assume that $E$ is a quasi-convex function, though the proof becomes more technical.
We shall work in a far wider setting which allows us to also obtain related results for the so-called classical Lorentz spaces.
The basic tool we shall need is the following proposition founded on Young's inequality.

\begin{proposition} \label{P:OL-inequality}
Let $A$ and $G$ be Young functions and $v$ a weight. Let $g=G'$ and
$g^{-1}$ and $G^{-1}$ denote the left-continuous inverses of $g$ and $G$,
respectively.  Then
\begin{equation} \label{E:OL-inequality}
	\int_{0}^\infty G^{-1}\bigl(f_*(t)\bigr)v(t)\dd t
		\le \int_{0}^\infty g^{-1} \left( \frac{v(t)}{\lambda a(t)} \right)v(t) \dd t
			+ \lambda \int_{0}^\infty A(\abs{f})
\end{equation}
for any $\lambda>0$ and any measurable $f$.
\end{proposition}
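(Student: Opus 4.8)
The plan is to reduce everything to a pointwise-in-$t$ inequality and then integrate, turning one of the two resulting terms into $\lambda\int_0^\infty A(\abs f)$ by Tonelli's theorem and controlling the other by the elementary bound $\widetilde G(w)\le wg^{-1}(w)$.

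First I would fix $\lambda>0$ and, for each $t\in(0,\infty)$ with $a(t)>0$, invoke Young's inequality~\eqref{E:Young-inequality} for the Young function $G$ with the entries $\sigma=G^{-1}(f_*(t))$ and $w=\frac{v(t)}{\lambda a(t)}$, which gives
\begin{equation*}
	G^{-1}\bigl(f_*(t)\bigr)\,\frac{v(t)}{\lambda a(t)}
		\le G\bigl(G^{-1}(f_*(t))\bigr)+\widetilde G\!\left(\frac{v(t)}{\lambda a(t)}\right)
		\le f_*(t)+\widetilde G\!\left(\frac{v(t)}{\lambda a(t)}\right),
\end{equation*}
the last step using $G(G^{-1}(u))\le u$, which holds here because $G^{-1}$ is the left-continuous inverse of the left-continuous function $G$ (same reasoning as behind~\eqref{E:comp-lc}). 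Multiplying through by $\lambda a(t)$ produces the pointwise estimate
\begin{equation*}
	G^{-1}\bigl(f_*(t)\bigr)v(t)
		\le \lambda a(t)f_*(t)+\lambda a(t)\,\widetilde G\!\left(\frac{v(t)}{\lambda a(t)}\right).
\end{equation*}

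Next I would integrate this over $(0,\infty)$. For the first term on the right, Tonelli's theorem and the representation~\eqref{E:Young-as-integral}, combined with $f_*(t)=\abs{\set{x\in(0,\infty):\abs{f(x)}>t}}$, give $\lambda\int_0^\infty a(t)f_*(t)\dd t=\lambda\int_0^\infty\bigl(\int_0^{\abs{f(x)}}a(t)\dd t\bigr)\dd x=\lambda\int_0^\infty A(\abs f)$. For the second term, by~\eqref{E:Young-conjugate-as-integral} one has $\widetilde G(w)=\int_0^w g^{-1}(\sigma)\dd\sigma$ — this holds for the left-continuous inverse as well, since it differs from the right-continuous one only on a countable set — and monotonicity of $g^{-1}$ then yields $\widetilde G(w)\le wg^{-1}(w)$, whence $\lambda a(t)\,\widetilde G\!\bigl(\tfrac{v(t)}{\lambda a(t)}\bigr)\le v(t)\,g^{-1}\!\bigl(\tfrac{v(t)}{\lambda a(t)}\bigr)$. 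Assembling these observations gives inequality~\eqref{E:OL-inequality} on the set where $a(t)>0$.

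It remains to dispose of the set $\set{t:a(t)=0}$, where $\tfrac{v(t)}{\lambda a(t)}$ is read as $+\infty$ whenever $v(t)>0$. There one needs only the crude bound $G^{-1}(f_*(t))\le g^{-1}(\infty)$ — both quantities equal the abscissa at which $G$, respectively $g$, becomes infinite, with the usual conventions — so the left-hand integrand is pointwise dominated by $v(t)\,g^{-1}\!\bigl(\tfrac{v(t)}{\lambda a(t)}\bigr)$ on that set and nothing further is needed (the contribution vanishes when $v(t)=0$). The points I expect to require the most care are the bookkeeping between left- and right-continuous generalized inverses — in particular confirming $G(G^{-1}(u))\le u$ in this left-continuous normalization and passing freely between the two inverses inside integrals — together with the degenerate situations $a(t)=0$ and the possibility that $A$, $G$, or $\widetilde G$ take the value $\infty$; by contrast the analytic core, namely the choice $w=\tfrac{v(t)}{\lambda a(t)}$ followed by multiplication by $\lambda a(t)$, is very short once Young's inequality is in hand.
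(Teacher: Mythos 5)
Your proposal is correct and takes essentially the same route as the paper: Young's inequality applied to the pair $G^{-1}(f_*(t))$ and $v(t)/(\lambda a(t))$, the two bounds $G(G^{-1}(u))\le u$ and $\widetilde G(w)\le w\,g^{-1}(w)$, and the Fubini/Tonelli identity $\int_0^\infty a\,f_*=\int_0^\infty A(|f|)$, with the paper differing only organizationally (it normalizes $\lambda=1$ and rescales $a$ at the end, assumes the right-hand side finite, and splits the line at $\sup\{A=0\}$ and $\inf\{A=\infty\}$, using there the same crude bound $G^{-1}\le g^{-1}(\infty)$ that you use on $\{a=0\}$). The only point to tighten is the set where $a(t)=\infty$: there ``multiplying through by $\lambda a(t)$'' is not literally legitimate, but the claimed pointwise estimate still holds by direct inspection (if $f_*(t)>0$ the term $\lambda a(t)f_*(t)$ is infinite, and if $f_*(t)=0$ then $G^{-1}(0)=0$), so this is a one-line patch rather than a gap.
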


\begin{proof}
Let $f$ be given and assume that both the integrals on the right-hand side of
\eqref{E:OL-inequality} are finite.
Denoting $\lb=\sup\set{\tau\ge 0: A(\tau)=0}$ and $\ub=\inf\set{\tau\ge 0: A(\tau)=\infty}$,
we infer that
$f_*(t)=0$ for $t>\ub$ and, by Fubini's theorem,
\begin{equation} \label{E:modular-fubini}
	\int_{0}^\infty A(\abs{f})
		= \int_\lb^\ub a(t)f_*(t) \dd t
		< \infty.
\end{equation}
Young's inequality implies that
\begin{equation*}
	G^{-1}\bigl(f_*(t)\bigr)\frac{v(t)}{a(t)}
		\le G\bigl(G^{-1}(f_*(t))\bigr) + \widetilde{G}\left(\frac{v(t)}{a(t)}\right)
		\quad\text{for $t\in(\lb, \ub)$}.
\end{equation*}
Applying the inequality $G(G^{-1}(\tau))\le\tau$ for Young function~$G$, using
the estimate $\widetilde{G}(\tau)\le \tau g^{-1}(\tau)$,
and multiplying both
sides by~$a(t)$ yields
\begin{equation*}
	G^{-1}\bigl(f_*(t)\bigr) v(t)
		\le a(t) f_*(t) + g^{-1}\left(\frac{v(t)}{a(t)}\right) v(t),
\end{equation*}
which, integrating over $(\lb,\ub)$ and employing equality~\eqref{E:modular-fubini},
gives
\begin{equation} \label{E:OL-middle}
	\int_{\lb}^{\ub} G^{-1}\bigl(f_*(t)\bigr)v(t) \dd t
		\le \int_{0}^\infty A(\abs{f})
			+ \int_\lb^\ub g^{-1}\left(\frac{v(t)}{a(t)}\right)v(t)\dd t.
\end{equation}

Assume that $t_0>0$. We have that $a(t)=0$ for $t\in[0,\lb)$
and the convergence of the first integral on the right-hand side of
\eqref{E:OL-inequality} dictates that
\begin{equation} \label{E:g-invers-infty}
	g^{-1}(\infty) = K < \infty.
\end{equation}
and, consequently
\begin{equation} \label{E:ocas-first}
	\int_0^\lb g^{-1}\left(\frac{v(t)}{a(t)}\right)v(t) \dd t
		= \int_{0}^\lb Kv(t)\dd t.
\end{equation}
Since $g$ is non-decreasing, condition~\eqref{E:g-invers-infty}
also implies that $g^{-1}\le K$ on $[0,\infty]$,
hence $g=\infty$ on $[K,\infty)$ and therefore also $G=\infty$ on
$(K,\infty)$. In conclusion, $G^{-1}\le K$ on $[0,\infty]$ and
therefore
\begin{equation} \label{E:OL-first}
	\int_0^{\lb} G^{-1}\bigl(f_*(t)\bigr)v(t) \dd t
		\le \int_0^\lb Kv(t)\dd t
		= \int_0^\lb g^{-1}\left(\frac{v(t)}{a(t)}\right)v(t) \dd t,
\end{equation}
where the equality is due to~\eqref{E:ocas-first}.

If $t_\infty<\infty$, then, again by the assumed convergence of integrals
on the right hand side of inequality~\eqref{E:OL-inequality}, we infer that
\begin{equation*}
	\int_\ub^\infty g^{-1}\left(\frac{v(t)}{a(t)}\right)v(t) \dd t
		= \int_{\ub}^\infty kv(t)\dd t < \infty,
\end{equation*}
where $k=g^{-1}(0)$. Also $G^{-1}(0)=0$ by the left continuity of $G^{-1}$ and,
as $f_*=0$ beyond $\ub$,
\begin{equation} \label{E:OL-last}
	\int_\ub^\infty G^{-1}\bigl(f_*(t)\bigr)v(t) \dd t
		= \int_\ub^\infty kv(t)\dd t
		= \int_\ub^\infty g^{-1}\left(\frac{v(t)}{a(t)}\right)v(t) \dd t.
\end{equation}
Now, estimates~\eqref{E:OL-middle}, \eqref{E:OL-first} and \eqref{E:OL-last}
combined give
\begin{equation*}
	\int_{0}^\infty G^{-1}\bigl(f_*(t)\bigr)v(t)\dd t
		\le \int_{0}^\infty g^{-1} \left( \frac{v(t)}{a(t)} \right)v(t) \dd t
			+ \int_{0}^\infty A(\abs{f}).
\end{equation*}
The desired inequality~\eqref{E:OL-inequality} then follows once we replace
$a$ by $\lambda a$.
\end{proof}

From Proposition~\ref{P:OL-inequality}, we get as a side result a sufficient condition for an Orlicz space to be embedded into a \emph{classical Lorentz space} $\Lambda_w^q$ defined as a collection of all measurable functions $f$ for which the functional
\begin{equation}\label{E:classical-lorentz}
	\nrm{f}_{\Lambda_w^q}
		= \left( \int_0^\infty f^*(t)^q\,w(t)\dd t \right)^\iq
\end{equation}
is finite. Here $q>0$ and $w$ is a locally integrable nonnegative function.

\newcommand{\ep}{{q}}

\begin{theorem}[embedding of Orlicz space into classical Lorentz space] \label{T:embedding-orlicz-into-classical-Lorentz}
Let $A$ be a Young function, $w$ be a non-increasing, locally integrable, non-negative function and $\ep>0$.
Assume that
\begin{equation} \label{E:suff-condition-Orlicz-embedding-into-Lorentz}
	N_\lambda
		= \int_0^{\infty} W\bigl(w^{-1}\bigl(\lambda a(t)t^{1-\ep}\bigr)\bigr)\,t^{\ep-1}\dd t
		< \infty,
\end{equation}
for some $\lambda>0$,
where $W(t)=\int_{0}^t w$ and $w^{-1}$ is the right-continuous inverse of $w$.
Then
\begin{equation} \label{E:Orlicz-Lorentz-embedding}
	\nrm{f}_{\Lambda_w^\ep} \le (\ep N_\lambda+\ep\lambda)^\frac1\ep \nrm{f}_{L^A}
		\quad\text{for every $f\in L^A$}.
\end{equation}
\end{theorem}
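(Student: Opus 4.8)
The plan is to derive the claimed embedding directly from Proposition~\ref{P:OL-inequality} by choosing the Young function $G$ there so that the left-hand side of \eqref{E:OL-inequality} reproduces the $\ep$-th power of the $\Lambda_w^\ep$ norm, and the first term on the right-hand side becomes (a constant multiple of) $N_\lambda$. Concretely, I would set $G(s)=s^\ep$ when $\ep\ge 1$; more generally, since we only need $G$ to be a Young function with $G^{-1}(s)\sim s^{1/\ep}$, I would take $G$ to be the Young function associated (via \eqref{E:Young-from-quasiconvex}) to the quasi-convex function $s\mapsto s^\ep$, so that $G(s)\approx s^\ep$ and $G^{-1}(s)\approx s^{1/\ep}$ globally, up to absolute constants. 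With this choice, $G^{-1}(f_*(t))\approx f_*(t)^{1/\ep}$, and since $w$ is non-increasing, the layer-cake/Hardy--Littlewood identity $\int_0^\infty f_*(t)^{1/\ep}\,\tilde w(t)\dd t$ should be matched against $\int_0^\infty f^*(t)^\ep w(t)\dd t$ after a further substitution. Here I would use the weight $v$ in Proposition~\ref{P:OL-inequality} as a free parameter to absorb the discrepancy between integrating against $f_*$ versus $f^*$.

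The technically cleanest route is actually to avoid passing through $f^*$ entirely and instead to recognize that both $\nrm{f}_{\Lambda_w^\ep}^\ep$ and $\int_0^\infty G^{-1}(f_*(t))v(t)\dd t$ are rearrangement-type functionals; by the standard identity $\int_0^\infty \Phi(f^*(t))\,w(t)\dd t = \int_0^\infty \Phi(f_*(s))\,w_*\text{-type}\dots$ one reduces to choosing $v$ so that $\int_0^\infty G^{-1}(f_*(t))\,v(t)\dd t$ equals $\int_0^\infty f^*(t)^\ep w(t)\dd t$ up to a constant. The natural choice, given $G^{-1}(s)\approx s^{1/\ep}$, is to take $v$ comparable to $t^{\ep-1}$ composed appropriately, and then the first integral on the right of \eqref{E:OL-inequality}, namely $\int_0^\infty g^{-1}\!\left(\frac{v(t)}{\lambda a(t)}\right)v(t)\dd t$, should simplify — using $g^{-1}(s)=(G')^{-1}(s)\approx s^{\ep-1}$ when $\ep\ge 1$ (and the analogous globally-equivalent estimate in general) — to something of the form $\int_0^\infty \left(\frac{t^{\ep-1}}{\lambda a(t)}\right)^{\ep'-1}\!t^{\ep-1}\dd t$ after the substitution $t\mapsto W^{-1}(t)$ or its inverse. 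The appearance of $W\bigl(w^{-1}(\lambda a(t)t^{1-\ep})\bigr)t^{\ep-1}$ in \eqref{E:suff-condition-Orlicz-embedding-into-Lorentz} strongly suggests that the right substitution is the one changing variables from the "$t$" of $\Lambda_w^\ep$ to the "$t$" of $L^A$ via $W$, i.e.\ essentially $s = W^{-1}(t)$, so $W^{-1}$ and $w^{-1}$ enter through $\widetilde G$ and the chain rule.

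Having produced, for every $\lambda>0$, an inequality of the shape
\begin{equation*}
	\nrm{f}_{\Lambda_w^\ep}^\ep \le \ep N_\lambda + \ep\lambda\,\varrho_A(f),
\end{equation*}
valid first under the a priori assumption that the right-hand integrals are finite (as in the proof of Proposition~\ref{P:OL-inequality}, with the boundary terms at $\lb$ and $\ub$ handled exactly as there), I would then apply this to $f/\nrm{f}_{L^A}$ and invoke \eqref{E:Orlicz-ball}, which gives $\varrho_A\!\bigl(f/\nrm{f}_{L^A}\bigr)\le 1$, to obtain
\begin{equation*}
	\nrm{f}_{\Lambda_w^\ep}^\ep \le (\ep N_\lambda + \ep\lambda)\,\nrm{f}_{L^A}^\ep,
\end{equation*}
which is \eqref{E:Orlicz-Lorentz-embedding} after taking $\ep$-th roots. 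The general $\ep>0$ case with $\ep<1$, where $s\mapsto s^\ep$ is not convex, is dealt with by replacing it with an equivalent Young function as noted above and absorbing the resulting absolute constants into $N_\lambda$ and the final estimate.

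I expect the main obstacle to be the bookkeeping of the change of variables that turns the abstract bound $\int_0^\infty g^{-1}\!\bigl(v/(\lambda a)\bigr)v\dd t$ into the explicit quantity $N_\lambda$ of \eqref{E:suff-condition-Orlicz-embedding-into-Lorentz}: one must pin down $v$ precisely (it is not simply $w$ itself, since the norm \eqref{E:classical-lorentz} involves $f^*$ while \eqref{E:OL-inequality} involves $f_*$, and a $W$-substitution is needed), verify that this $v$ makes the left-hand side of \eqref{E:OL-inequality} equal to a constant times $\nrm{f}_{\Lambda_w^\ep}^\ep$, and check that the inverse functions $g^{-1}$, $w^{-1}$, $W^{-1}$ compose in exactly the way that produces $W\bigl(w^{-1}(\lambda a(t)t^{1-\ep})\bigr)t^{\ep-1}$. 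The monotonicity hypothesis on $w$ is what guarantees $w^{-1}$ and $W$ behave well and that the Hardy--Littlewood rearrangement identity applies cleanly; the local integrability of $w$ ensures $W$ is finite-valued. Everything else — the boundary-term analysis near $\lb$ and $\ub$, the passage from the a priori finite case to the general case, and the final homogenization via \eqref{E:Orlicz-ball} — is routine and parallels the proof of Proposition~\ref{P:OL-inequality} already given.
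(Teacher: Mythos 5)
Your overall strategy --- deduce the theorem from Proposition~\ref{P:OL-inequality} for a suitable Young function $G$ and weight $v$, then normalize via \eqref{E:Orlicz-ball} applied to $f/\|f\|_{L^A}$ --- is exactly the paper's strategy. The gap is in the one non-routine step: your committed choice $G(s)\approx s^{q}$, so $G^{-1}(s)\approx s^{1/q}$, with $v$ left as a ``free parameter to absorb the discrepancy,'' cannot work for a general non-increasing $w$. Indeed, if $G^{-1}(s)\approx s^{1/q}$, then testing on $f=\sigma\chi_{[0,s)}$ gives $\int_0^\infty G^{-1}(f_*(t))\,v(t)\,\mathrm{d}t\approx s^{1/q}\int_0^\sigma v$, whereas $\|f\|_{\Lambda_w^{q}}^{q}=\sigma^{q}W(s)$; these are comparable for all $s,\sigma$ only if $W(s)\approx s^{1/q}$, i.e.\ only for power weights. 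So no fixed weight $v$ matches the two functionals. The rescue you suggest, substituting $s=W(t)$, amounts to applying Proposition~\ref{P:OL-inequality} to $h=f^*\circ W^{-1}$ instead of $f$; but then the modular on the right becomes $\int_0^\infty A(f^*(t))\,w(t)\,\mathrm{d}t$, a weighted modular no longer controlled by $\|f\|_{L^A}$ --- the proposition rigidly couples the distribution function on the left with the modular of the \emph{same} function on the right, so this route breaks the link to the Orlicz norm. (Also, for $G(s)=s^{q}$ one has $g^{-1}(s)\approx s^{1/(q-1)}$, not $s^{q-1}$, and replacing $s^{q}$ by an equivalent Young function for $q<1$ would already destroy the exact constant $(qN_\lambda+q\lambda)^{1/q}$ claimed in \eqref{E:Orlicz-Lorentz-embedding}.)

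The correct choice, essentially forced by the shape of $N_\lambda$ in \eqref{E:suff-condition-Orlicz-embedding-into-Lorentz}, is the transposed one: take $G=W^{-1}$ (left-continuous inverse), which is a Young function precisely because $w$ non-increasing makes $W$ concave, and take $v(t)=t^{q-1}$. Fubini then gives the exact identity
\begin{equation*}
	\|h\|_{\Lambda_w^{q}}^{q}
		= q\int_0^\infty W\bigl(h_*(t)\bigr)\,t^{q-1}\,\mathrm{d}t
		= q\int_0^\infty G^{-1}\bigl(h_*(t)\bigr)\,t^{q-1}\,\mathrm{d}t,
\end{equation*}
and $g=G'$ satisfies $g^{-1}(\tau)=W\bigl(w^{-1}(1/\tau)\bigr)$, so the first term on the right of \eqref{E:OL-inequality} becomes exactly $N_\lambda$; applying this to $h=f/\|f\|_{L^A}$ and taking $q$-th roots gives \eqref{E:Orlicz-Lorentz-embedding} with the stated constant and no absorbed absolute constants. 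In short: right proposition, right normalization, but the decisive identification $G^{-1}=W$, $v(t)=t^{q-1}$ (rather than $G\approx s^{q}$ with $v$ encoding $w$) is missing, and the alternative you sketch in its place fails structurally.
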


\begin{proof}
Let $G=W^{-1}$ denote the left-continuous inverse of $W$. Then $G$ is a Young function
and $g^{-1}$, the left-continuous inverse of $g=G'$, obeys
\begin{equation} \label{E:g-invers-Ww}
	g^{-1}(\tau) = W\bigl(w^{-1}(1/\tau)\bigr)
		\quad\text{for every $\tau\in[0,\infty]$}.
\end{equation}
The definition of classical Lorentz functional together with Fubini's theorem give
\begin{equation*}
	\nrm{h}_{\Lambda_w^\ep}^\ep
		= \ep\int_{0}^\infty G^{-1}\bigl(h_*(t)\bigr)\,t^{\ep-1} \dd t
\end{equation*}
for arbitrary measurable $h$.  Application of Proposition~\ref{P:OL-inequality} on a
function $h=f/\nrm{f}_{L^A}$ with $v(t)=t^{\ep-1}$ yields
\begin{align*}
	\frac{\nrm{f}_{\Lambda_w^\ep}^\ep}{\nrm{f}_{L^A}^\ep}
		= \nrm{h}_{\Lambda_w^\ep}^\ep
		\le \ep\int_{0}^\infty g^{-1}\left( \frac{t^{\ep-1}}{\lambda a(t)} \right)\,t^{\ep-1} \dd t
			+ \ep\lambda \int_{0}^\infty A(\abs{h})
		\le \ep N_\lambda + \ep\lambda,
\end{align*}
where we used equality~\eqref{E:g-invers-Ww} and property~\eqref{E:Orlicz-ball}.
The claim now follows by taking the $\ep$-th root.
\end{proof}

In what follows, we are going to replace the correlative function $E_\#$ of a given quasi-convex function $E$ with a close Young function $G$.
This step is purely technical and deserves some comment.
As far as embeddings are concerned, to prove our main results, we shall be mainly interested in embeddings of the spaces $L^A \hra \Lambda^E$ for a couple of quasi-convex functions $A$ and $E$.
It turns out that it is extremely useful to employ a functional equivalent to $\nrm{\cdot}_{\Lambda^E}$, which employs the function $G^{-1}$, depending only on $E$, instead of $E^{-1}_\#$ but enjoying the property that $G^{-1}\approx E^{-1}_\#$ and $G$ is \emph{Young} (not just quasi-convex).
Roughly speaking, the reason for this step consists in the fact, that for a pair of Young functions $G_1$, $G_2$, one has $G_1\prec G_2$ if and only if $G_1'\prec G_2'$, but this property fails if we only assume $G_1$ and $G_2$ to be quasi-convex.
This may cause serious technical troubles in the case when $E^{-1}_\#$ is \emph{not concave}, and that may happen even if $E$ is Young.

\begin{lemma} \label{L:LambdaE-G}
Let $E$ be a quasi-convex function and denote
\begin{equation} \label{E:t-0-infty-def}
	\tau_0 = \inf\set{\tau\ge 0: E(\tau)>0}
		\quad\text{and}\quad
	\tau_\infty = \sup\set{\tau\ge 0: E(\tau)<\infty}.
\end{equation}
Define
$G\colon[0,\infty]\to[0,\infty]$ by
\begin{equation} \label{E:G-def}
	G(t) = \int_{0}^{t}\frac{E_\#(\tau)}{\tau}\dd\tau
		\quad\text{for $t\in[0,\infty]$}
\end{equation}
and set $t_0=1/\tau_\infty$ and $t_\infty=1/\tau_0$.
Then $G$ is a Young function that is zero on $[0,t_0]$, increasing on $(t_0,t_\infty]$ and infinity on $(t_\infty,\infty]$.
The left-continuous inverse $G^{-1}$ is increasing on $(0,G(t_\infty)]$, constant $G^{-1}=t_\infty$ on $[G(t_\infty),\infty]$, continuous on $(0,\infty]$, continuous at zero if and only if $E$ is finite-valued, and  it is $G^{-1}(t)\to t_0$ as $t\to 0_+$.
Moreover
\begin{equation} \label{E:inverse-sharp-ineq}
	E^{-1}_\#(t) \le G^{-1}(t) \le 2E^{-1}_\#(t)
		\quad\text{for $t\in[0,\infty]$}
\end{equation}
and
\begin{equation} \label{E:LambdaE-G}
	\nrm{f}_{\Lambda^E}
		\le \int_{0}^\infty G^{-1}(f_*)
		\le 2\nrm{f}_{\Lambda^E}
	\quad\text{for $f\in\measurable$}.
\end{equation}
\end{lemma}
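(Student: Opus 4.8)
The plan is to establish the three displayed assertions in the order in which they logically build on one another: first the structural description of $G$ as a Young function with the prescribed vanishing/increasing/infinite behaviour, then the pointwise two-sided bound \eqref{E:inverse-sharp-ineq} comparing $G^{-1}$ and $E^{-1}_\#$, and finally the norm equivalence \eqref{E:LambdaE-G}, which will be nothing more than integrating \eqref{E:inverse-sharp-ineq} composed with $f_*$ against Lebesgue measure on $(0,\infty)$ and invoking the definition \eqref{E:LambdaA-def} of $\nrm{\cdot}_{\Lambda^E}$.

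First I would observe that since $E$ is quasi-convex, $E_\#$ is quasi-convex by the stated equivalence ($A$ quasi-convex iff $A_\#$ quasi-convex), so $\tau \mapsto E_\#(\tau)/\tau$ is non-decreasing and $G$ defined by \eqref{E:G-def} is exactly the Young function associated to the quasi-convex $E_\#$ via \eqref{E:Young-from-quasiconvex}; in particular \eqref{E:Young-basic-equivalence} gives $G(t) \le E_\#(t) \le G(2t)$ globally. The endpoints require care: by definition of the correlative function, $E_\#(t) = 1/E(1/t)$, so $E_\#(t) = 0$ precisely when $E(1/t) = \infty$, i.e.\ for $1/t \ge \tau_\infty$, i.e.\ for $t \le 1/\tau_\infty = t_0$; and $E_\#(t) = \infty$ precisely when $E(1/t) = 0$, i.e.\ for $1/t \le \tau_0$, i.e.\ for $t \ge 1/\tau_0 = t_\infty$. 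Feeding this into the integral \eqref{E:G-def} shows $G \equiv 0$ on $[0,t_0]$ and $G \equiv \infty$ on $(t_\infty, \infty]$, while on $(t_0, t_\infty)$ the integrand $E_\#(\tau)/\tau$ is positive (and finite, since $E_\#$ is finite there by left-continuity and the definition of $t_\infty$), hence $G$ is strictly increasing there. The claimed properties of the left-continuous inverse $G^{-1}$ — increasing on $(0, G(t_\infty)]$, equal to the constant $t_\infty$ beyond $G(t_\infty)$, continuous on $(0,\infty]$, with $G^{-1}(0^+) = t_0$, and continuous at zero iff $t_0 = 0$ iff $E$ is finite-valued — then follow mechanically from \eqref{E:def-lc} and the structure of $G$ just described; these are routine and I would not belabour them.

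For the key estimate \eqref{E:inverse-sharp-ineq}, I would pass to inverses in $G(t) \le E_\#(t) \le G(2t)$. Taking right-continuous inverses (the inverse appearing in $E^{-1}_\#$, as in \eqref{E:comp-rc}), the inequality $G \le E_\#$ gives $E_\#^{-1} \le G^{-1}$, while $E_\# \le G(2\cdot)$ gives $G^{-1}(t) \le 2 E_\#^{-1}(t)$; one has to be slightly careful because the lemma states $G^{-1}$ as the left-continuous inverse, but left- and right-continuous inverses of a non-decreasing function differ only on an at most countable set and the two-sided bound is insensitive to this, so \eqref{E:inverse-sharp-ineq} holds as stated (at the endpoints $t=0$ and $t=\infty$ it is checked directly using $G^{-1}(0^+)=t_0=E_\#^{-1}(0^+)$ and $G^{-1}=t_\infty=E_\#^{-1}$ near infinity). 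Finally, \eqref{E:LambdaE-G} is obtained by substituting $t \mapsto f_*(t)$ into \eqref{E:inverse-sharp-ineq} and integrating over $(0,\infty)$, recalling from \eqref{E:LambdaA-def} that $\nrm{f}_{\Lambda^E} = \int_0^\infty E^{-1}_\#(f_*)$.

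I expect the main obstacle to be purely bookkeeping: keeping the four endpoint values $\tau_0, \tau_\infty, t_0, t_\infty$ and the three kinds of inverse (right-continuous, left-continuous, and the correlative inverse $E^{-1}_\#$) straight, and verifying the continuity-at-zero dichotomy, rather than any genuine analytic difficulty. The one place that genuinely needs the left-continuous inverse — as the surrounding discussion in the paper flags — is so that $G$ is Young with $G' $ well-behaved under the relation $\prec$; but that is a consequence one exploits later, not something needed to prove the lemma itself, so within this proof the right/left distinction is harmless and I would dispatch it with a one-line remark.
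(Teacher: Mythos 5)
Your overall route is the same as the paper's: $G$ is the Young function attached to the quasi-convex $E_\#$ via \eqref{E:Young-from-quasiconvex}, the structural statements follow from the endpoint description of $E_\#$, the key bound comes from $G(t)\le E_\#(t)\le G(2t)$ as in \eqref{E:Young-basic-equivalence}, and \eqref{E:LambdaE-G} is obtained by composing \eqref{E:inverse-sharp-ineq} with $f_*$ and integrating. The one step where you deviate is exactly where the paper spends its effort, and there your justification is not sound as stated. First, $E^{-1}_\#$ is, by the paper's convention, $(E^{-1})_\#$ (the correlative of the right-continuous inverse of $E$), not the inverse of $E_\#$; identifying the two needs an argument -- it does hold on $(0,\infty)$ because a quasi-convex function is strictly increasing wherever it is positive and finite, but it fails at $t=0$ and $t=\infty$. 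Second, the principle you invoke -- left- and right-continuous inverses differ on an at most countable set and ``the two-sided bound is insensitive to this'' -- does not prove a pointwise inequality asserted for every $t\in[0,\infty]$, and it is unsafe for the integrated inequality \eqref{E:LambdaE-G}, since $f_*$ occupies the single level $0$ on a set of infinite measure. What actually rescues your argument is the strict-monotonicity fact just mentioned together with explicit evaluation at the two endpoints; the paper sidesteps the issue by proving the exact identity $(G_\#)^{-1}_\#=G^{-1}$ (left-continuous) and transporting the inequality through correlatives, which is the real content of its proof of \eqref{E:inverse-sharp-ineq}.

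Your closing claim that within this proof ``the right/left distinction is harmless'' is wrong at one essential point: with the right-continuous inverse one has $G^{-1}(0)=t_0$, so the upper bound in \eqref{E:inverse-sharp-ineq} fails at $t=0$ whenever $E$ is not finite-valued (\ie $t_0>0$), and then $\int_0^\infty G^{-1}(f_*)$ is infinite for every bounded, compactly supported $f$, destroying \eqref{E:LambdaE-G}; the left-continuity convention, which forces $G^{-1}(0)=0$ while $G^{-1}(t)\to t_0$ as $t\to0_+$, is precisely what the lemma needs. Your endpoint checks would cover this if carried out with the values at $0$ and $\infty$ (not the one-sided limits you quote), so the gap is reparable, but it must be closed explicitly -- together with the identification of $E^{-1}_\#$ with the inverse of $E_\#$ on $(0,\infty)$ -- rather than dispatched with a one-line remark.
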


\begin{proof}
Quasi-convexity of $E$ implies that $E_\#$ is quasi-convex, whence $E_\#(t)/t$ is non-decreasing and $G$ is a Young function.
Next, $E=0$ on $[0,\tau_0]$ and $E_\#=\infty$ on $[t_\infty,\infty]$, hence $G=\infty$ on $(t_\infty,\infty]$.
Analogously, $E=\infty$ on $(\tau_\infty,\infty]$ and $E_\#=0$ on $[0,t_0)$, thus $G=0$ on $[0,t_0]$.
Function $G$ is increasing and continuous on $(t_0,t_\infty]$ and maps this interval onto $(0,G(t_\infty)]$, therefore $G^{-1}$ on $(0,G(t_\infty)]$ is the classical continuous increasing inverse of $G$.
For $t\ge G(t_\infty)$, we have
	$G^{-1}(t)
		= \inf\set{\tau\ge 0: G(\tau)\ge t}
		= \inf\set{\tau\ge 0: G(\tau)=\infty}
		= t_\infty$.
Altogether, $G^{-1}$ is continuous on $(0,\infty]$.
Finally,
	$\lim_{t\to 0_+} G^{-1}(t)
		= \inf\set{\tau\ge 0: G(\tau)>0}
		= t_0$
and, as by definition $G^{-1}(0)=0$, $G^{-1}$ is continuous at zero if and only if $t_0=0$ which happens if and only if $E$ is finite-valued.

By trivial estimates~\eqref{E:Young-trivial}, it is $G(t)\le E_\#(t)\le G(2t)$, which is equivalent to $G_\#(\tau)\le E(2\tau)\le G_\#(2\tau)$.
Passing to inverses, we obtain that
\begin{equation} \label{E:inverse-sharp-ineq-1}
	E^{-1}_\#(t) \le (G_\#)^{-1}_\#(t) \le 2E^{-1}_\#(t)
		\quad\text{for $t\in[0,\infty]$.}
\end{equation}
Now, by the definition of the right-continuous inverse, we have
\begin{equation*}
	(G_\#)^{-1}\bigl(\tfrac1t\bigr)
		= \inf\set[\big ]{\lambda\ge 0: G_\#(\lambda)> \tfrac1t}
		= \inf\set[\big ]{\tfrac1\sigma: t> G(\sigma)}
\end{equation*}
and therefore
\begin{equation*}
	(G_\#)^{-1}_\#(t)
		= \frac{1}{\inf\set[\big ]{\frac1\sigma: t> G(\sigma)}}
		= \sup \set{\sigma: t> G(\sigma) }
		= G^{-1}(t),
\end{equation*}
where the last equality is the definition of the left-continuous~$G^{-1}$.
Consequently, relation~\eqref{E:inverse-sharp-ineq-1} rewrites as \eqref{E:inverse-sharp-ineq}.
Inequalities~\eqref{E:LambdaE-G} then follow from \eqref{E:inverse-sharp-ineq} and definition~\eqref{E:LambdaA-def}.
\end{proof}

\begin{lemma} \label{L:w}
Let $E$ be a quasi-convex function and $G$, $t_0$, $t_\infty$ be as in Lemma~\ref{L:LambdaE-G}.
Define $w\colon(0,\infty)\to[0,\infty]$ by
\begin{equation} \label{E:w-def}
	w(\tau) = G^{-1}(\tau) E\left( \frac{1}{G^{-1}(\tau)} \right)
	\quad\text{for $\tau\in(0,\infty)$},
\end{equation}
where $G^{-1}$ is the left-continuous inverse of $G$.
Then $w$ is non-increasing, zero on $[G(t_\infty),\infty)$ and
\begin{equation}
\label{E:w-integral}
	G^{-1}(t) = t_0 + \int_{0}^{t} w
		\quad\text{for $t\in(0,\infty]$}.
\end{equation}
Furthermore,
\begin{equation} \label{E:Lambda-norm-weight}
	\int_{0}^{\infty} G^{-1}(f_*)
		= t_0\nrm{f}_\infty
			+ \int_{0}^{\infty} f^*w
	\quad\text{for $f\in\MM$}.
\end{equation}
\end{lemma}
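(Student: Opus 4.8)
The plan is to build the entire lemma on the tautology $E_\#(\sigma)\,E(1/\sigma)=1$, valid whenever $0<E(1/\sigma)<\infty$, i.e.\ essentially for $\sigma\in(t_0,t_\infty)$. First I would dispose of the two structural claims. For the monotonicity of $w$, write $s=G^{-1}(\tau)$, so $w(\tau)=s\,E(1/s)=E(u)/u$ with $u=1/s$; since $E$ is quasi-convex, $u\mapsto E(u)/u$ is non-decreasing on $(0,\infty)$, and since $\tau\mapsto G^{-1}(\tau)$ is non-decreasing (Lemma~\ref{L:LambdaE-G}), $\tau\mapsto 1/G^{-1}(\tau)$ is non-increasing, so the composition $w$ is non-increasing. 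For the vanishing of $w$ on $[G(t_\infty),\infty)$, Lemma~\ref{L:LambdaE-G} gives $G^{-1}\equiv t_\infty=1/\tau_0$ there, hence $w\equiv t_\infty\,E(\tau_0)$, and the left-continuity of the quasi-convex $E$ combined with $E\equiv0$ on $[0,\tau_0)$ forces $E(\tau_0)=0$; the boundary case $\tau_0=0$ is handled by reading $E(0)=0$ and $\infty\cdot0=0$.

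For the representation \eqref{E:w-integral}, I would use that on $(t_0,t_\infty)$ the Young function $G$ is finite, continuous and strictly increasing (Lemma~\ref{L:LambdaE-G}); being there the integral of the positive function $E_\#(\sigma)/\sigma$, it is locally absolutely continuous with $G'(\sigma)=E_\#(\sigma)/\sigma$ for a.e.\ $\sigma$, and it restricts to a homeomorphism of $(t_0,t_\infty)$ onto $(0,G(t_\infty))$ whose inverse is the left-continuous $G^{-1}$. For $0<\varepsilon<s<G(t_\infty)$, substituting $\tau=G(\sigma)$ in $\int_\varepsilon^s w(\tau)\dd\tau$ turns the integrand into $w(G(\sigma))\,G'(\sigma)=\sigma E(1/\sigma)\cdot E_\#(\sigma)/\sigma=E(1/\sigma)E_\#(\sigma)=1$ for a.e.\ $\sigma$, so $\int_\varepsilon^s w=G^{-1}(s)-G^{-1}(\varepsilon)$; letting $\varepsilon\to0_+$ and invoking $G^{-1}(\varepsilon)\to t_0$ (Lemma~\ref{L:LambdaE-G}) gives $\int_0^s w=G^{-1}(s)-t_0$. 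Continuity of $G^{-1}$ lets me pass $s\uparrow G(t_\infty)$, and for $s\ge G(t_\infty)$ both sides are constant ($w\equiv0$ and $G^{-1}\equiv t_\infty$), so \eqref{E:w-integral} holds on $(0,\infty]$; the degenerate cases $t_0=0$ or $G(t_\infty)=\infty$ are absorbed by the same computation together with monotone convergence.

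Then \eqref{E:Lambda-norm-weight} follows from \eqref{E:w-integral} by Tonelli: since $G^{-1}(0)=0$ and $f_*(t)>0$ precisely for $t<\nrm{f}_\infty$, one has $\int_0^\infty G^{-1}(f_*(t))\dd t=\int_0^{\nrm{f}_\infty}\bigl(t_0+\int_0^{f_*(t)}w(\tau)\dd\tau\bigr)\dd t=t_0\nrm{f}_\infty+\int_0^\infty w(\tau)\,\abs{\set{t>0:f_*(t)>\tau}}\dd\tau$ after interchanging the order of integration (the extra constraint $t<\nrm{f}_\infty$ is redundant, since $f_*(t)>\tau\ge0$ already forces $t<\nrm{f}_\infty$); as $f_*$ is non-increasing and right-continuous, $\abs{\set{t>0:f_*(t)>\tau}}=f^*(\tau)$, and the right-hand side becomes $t_0\nrm{f}_\infty+\int_0^\infty f^*w$, which is \eqref{E:Lambda-norm-weight}. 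The degenerate readings $t_0\nrm{f}_\infty=0$ when $t_0=0$, and ``both sides $=\infty$'' when $\nrm{f}_\infty=\infty$ and $t_0>0$, are consistent.

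The only genuine obstacle I foresee is making the change of variables in \eqref{E:w-integral} fully rigorous: $G$ is merely a Young function, hence need not be everywhere differentiable, while $G^{-1}$ carries a jump of height $t_0$ at the origin and a flat tail on $[G(t_\infty),\infty]$; these singular pieces must be excised by working on compact subintervals of $(t_0,t_\infty)$, where $G$ is absolutely continuous and strictly increasing, before the limits $\varepsilon\to0_+$ and $s\uparrow G(t_\infty)$ are taken.
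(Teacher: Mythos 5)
Your proposal is correct and follows essentially the same route as the paper: monotonicity of $w$ from the quasi-convexity of $E$ (equivalently, $w=1/g\circ G^{-1}$ with $g(t)=E_\#(t)/t$ non-decreasing), vanishing on $[G(t_\infty),\infty)$ via $E(\tau_0)=0$, the substitution $\tau=G(\sigma)$ on $(t_0,t_\infty)$ where the integrand collapses to $1$ a.e., and then Fubini/Tonelli together with $\abs{\set{t:f_*(t)>\tau}}=f^*(\tau)$. The change-of-variables rigor you flag as the only obstacle is exactly what the paper settles by invoking the standard result for locally absolutely continuous increasing substitutions (Hewitt--Stromberg, Corollary 20.5), so your compact-subinterval workaround is the intended fix.
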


\begin{proof}
Denote $g(t)=E_\#(t)/t$ for $t\in(0,\infty)$.
Then $g$ is nondecreasing on $(0,\infty)$, $G^{-1}$ is positive, finite and increasing on $(0,G(t_\infty))$, hence $w(\tau)=1/g(G^{-1}(\tau))$ is nonincreasing on $(0,G(t_\infty))$.
If $\tau\ge G(t_\infty)$, then $G^{-1}(\tau)=t_\infty$ and $E(1/t_\infty)=0$ and therefore $w(\tau)=0$.
Now, as $G$ is locally absolutely continuous and $G'=g$ \ae on $(t_0,t_\infty)$, we have by the change of variables $\tau=G(r)$ \citep[see \eg][Corollary 20.5]{Hew:75} that
\begin{equation*}
	\int_{0}^t w(\tau)\dd\tau
		= \int_{0}^t \frac{\d\tau}{g\bigl(G^{-1}(\tau)\bigr)}
		= \int_{t_0}^{G^{-1}(t)} \d r
		= G^{-1}(t) - t_0
	\quad\text{for $t\in(0,G(t_\infty))$}.
\end{equation*}
Equality~\eqref{E:w-integral} holds also for $t\ge G(t_\infty)$ as $G^{-1}(t)=t_\infty$ constantly and $w(t)=0$ here.
Finally, identity~\eqref{E:Lambda-norm-weight} follows by \eqref{E:w-integral}, Fubini's theorem and the fact that $\inf\set{t\ge 0: f_*(t)=0}=\nrm{f}_\infty$, as
\begin{equation*}
	\int_{0}^{\infty} G^{-1}\bigl(f_*(t)\bigr)\dd t
		= \int_{0}^{\nrm{f}_\infty}
			\left(
				t_0 + \int_{0}^{f_*(t)} w(\tau)\dd\tau
			\right) \d t
		= t_0\nrm{f}_\infty
			+ \int_{0}^{\infty} f^*(\tau)w(\tau)\dd\tau.
		\qedhere
\end{equation*}
\end{proof}

Relations \eqref{E:LambdaE-G} and \eqref{E:Lambda-norm-weight} are of particular importance as they enable us to somehow separate $f$ from $E$ within the definition of $\nrm{f}_{\Lambda^E}$.
Note that the expression on the right-hand side of \eqref{E:Lambda-norm-weight} resembles the classical definition of the Lorentz spaces, see~\eg~\citep[Chapter~2, Eq.~5.19]{BS}.

We are now ready to provide a sufficient condition for the embedding $L^A\hra \Lambda^E$, which will allow us to show, in the subsequent theorem, that each function in $\Lambda^E$ is contained in some smaller Orlicz space.

\begin{theorem}[Orlicz-Lorentz Embedding]
\label{T:embedding-orlicz-into-endpoint-Lorentz}
Let be $A$ a Young function and $E$ a quasi-convex function.
Let $g^{-1}$ denote the left-continuous inverse of $g(t)=E_\#(t)/t$.
If
\begin{equation} \label{E:union-function-inequality}
	N_\lambda
		= \int_0^\infty g^{-1}\left(\frac{1}{\lambda a(t)}\right) \d t
		< \infty
	\quad\text{for some $\lambda>0$},
\end{equation}
then then $L^A \hra \Lambda^E$ and
\begin{equation} \label{E:LA-LambdaE-emb}
	\nrm{f}_{\Lambda^E}
		\le (N_\lambda + \lambda) \nrm{f}_{L^A}
	\quad\text{for $f\in L^A$}.
\end{equation}
\end{theorem}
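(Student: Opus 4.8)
The plan is to deduce this directly from Proposition~\ref{P:OL-inequality} by feeding it the Young function $G$ that Lemma~\ref{L:LambdaE-G} attaches to the quasi-convex function $E$, namely $G(t)=\int_0^t E_\#(\tau)/\tau\dd\tau$. The first --- and only mildly delicate --- point is to match notation: the right-continuous derivative $G'$ of this $G$ agrees with the non-decreasing function $t\mapsto E_\#(t)/t$ except at its (countably many) jump points, and since a left-continuous generalized inverse is insensitive to the one-sided-continuity normalization of the function it inverts, the $g$ and $g^{-1}$ occurring in the statement are precisely the $g=G'$ and its left-continuous inverse for which Proposition~\ref{P:OL-inequality} is written. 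In particular, $N_\lambda$ is exactly the first term on the right-hand side of \eqref{E:OL-inequality} taken with the constant weight $v\equiv 1$.

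Now fix $f\in L^A$; we may assume $0<\nrm{f}_{L^A}<\infty$, the cases $f=0$ and $\nrm{f}_{L^A}=\infty$ being trivial. Put $h=f/\nrm{f}_{L^A}$, so that $\varrho_A(h)=\int_0^\infty A(\abs{h})\le 1$ by \eqref{E:Orlicz-ball}. Apply Proposition~\ref{P:OL-inequality} with the Young functions $A$ and $G$, the weight $v\equiv 1$, the function $h$, and the $\lambda$ from hypothesis \eqref{E:union-function-inequality}. Both integrals on the right-hand side of \eqref{E:OL-inequality} are then finite --- they equal $N_\lambda$ and $\lambda\varrho_A(h)\le\lambda$ --- and we obtain
\begin{equation*}
	\int_0^\infty G^{-1}\bigl(h_*(t)\bigr)\dd t \le N_\lambda + \lambda .
\end{equation*}

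Finally, the left-hand inequality in \eqref{E:LambdaE-G} of Lemma~\ref{L:LambdaE-G} gives $\nrm{h}_{\Lambda^E}\le\int_0^\infty G^{-1}(h_*)\le N_\lambda+\lambda$. Since $\nrm{\cdot}_{\Lambda^E}$ and $\nrm{\cdot}_{L^A}$ are both positively homogeneous, rescaling from $h$ back to $f$ yields $\nrm{f}_{\Lambda^E}\le(N_\lambda+\lambda)\nrm{f}_{L^A}$, which also delivers $L^A\hra\Lambda^E$. I expect the only genuine obstacle to lie in the bookkeeping of the first paragraph --- confirming that the $G$ of Lemma~\ref{L:LambdaE-G} is the Young function whose derivative generates the hypothesis's $g^{-1}$, and that the left-continuous inverses coincide across the left/right-continuous ambiguity in $g$; once that identification is in place, the theorem drops out in two lines, exactly as in the proof of Theorem~\ref{T:embedding-orlicz-into-classical-Lorentz}.
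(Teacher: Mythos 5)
Your proposal is correct and follows essentially the same route as the paper's proof: apply Proposition~\ref{P:OL-inequality} with the weight $v\equiv1$ to $h=f/\nrm{f}_{L^A}$, combine with inequality~\eqref{E:LambdaE-G} of Lemma~\ref{L:LambdaE-G}, and conclude by homogeneity. Your extra care in identifying $g^{-1}$ with the left-continuous inverse of the right-continuous derivative $G'$ is a correct bookkeeping point that the paper's two-line proof passes over silently.
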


\begin{proof}
Let $f\in L^A$ be a non-zero function and set $h=f/\nrm{f}_{L^A}$ and denote $G(t)=\int_0^t g$.
Proposition~\ref{P:OL-inequality} and Lemma~\ref{L:LambdaE-G} combined give
\begin{equation}
	\frac{\nrm{f}_{\Lambda^E}}{\nrm{f}_{L^A}}
		= \nrm{h}_{\Lambda^E}
		\le \int_0^\infty G^{-1}(h_*(s))\dd s
		\le \int_0^\infty g^{-1}\left(\frac{1}{\lambda a(t)}\right)\d t
			+ \lambda\int_0^\infty A(\abs{h})
		\le N_\lambda + \lambda
\end{equation}
since, by~\eqref{E:Orlicz-balls}, $\rho_A(h)\le 1$ as
$\nrm{h}_{L^A}=1$ and inequality~\eqref{E:LA-LambdaE-emb} follows.
\end{proof}

\begin{theorem}\label{T:union-absolutely-continuous}
Let $E$ be a quasi-convex function.
To each nonzero $f\in\Lambda^E$, there exists a Young function $A$ obeying condition~\eqref{E:union-function-inequality} and such that $f\in L^A$.
\end{theorem}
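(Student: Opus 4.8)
The goal is, given a nonzero $f\in\Lambda^E$, to build a Young function $A$ with $f\in L^A$ and satisfying the convergence condition~\eqref{E:union-function-inequality}. The plan is to reverse-engineer $A$ from the data $f$. By Lemma~\ref{L:LambdaE-G} (or directly from~\eqref{E:LambdaA-def}) the hypothesis $f\in\Lambda^E$ gives $\int_0^\infty G^{-1}(f_*)<\infty$, where $G$ is the Young function attached to $E$ by~\eqref{E:G-def}. Rewriting this via Lemma~\ref{L:w}, we have $t_0\nrm{f}_\infty+\int_0^\infty f^*w<\infty$ with $w$ as in~\eqref{E:w-def}; in particular $f^*$ is $w$-integrable and, if $t_0>0$, $f$ is bounded. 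The integral $N_\lambda$ in~\eqref{E:union-function-inequality} can be re-expressed: since $g^{-1}$ is the left-continuous inverse of $g(t)=E_\#(t)/t$ and $w(\tau)=1/g(G^{-1}(\tau))$, one has $g^{-1}(1/\lambda a(t))=\sup\{\sigma: g(\sigma)\le 1/(\lambda a(t))\}$, which is exactly $G^{-1}$ evaluated appropriately — so $N_\lambda=\int_0^\infty g^{-1}(1/(\lambda a(t)))\,dt$ should be matchable against $\int_0^\infty f^*w$ by choosing $a$ so that $g^{-1}(1/(\lambda a(t)))$ is comparable to $f^*(t)$.

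First I would reduce to the case where $f=f^*$ is itself a non-increasing, right-continuous function on $(0,\infty)$, since both membership in $L^A$ and the integral $N_\lambda$ depend on $f$ only through $f^*$. Next I would prescribe the right-continuous derivative $a=A'$ directly. The natural choice is to force $g^{-1}(1/(\lambda a(t)))=f^*(t)$, i.e.\ $a(t)=\dfrac{1}{\lambda\,g(f^*(t))}$ for $t$ in the range where $f^*$ is strictly between its infimum and supremum, with $\lambda=1$ say. One must check that $a$ defined this way is non-decreasing (it is: $f^*$ is non-increasing and $g$ is non-decreasing, so $g(f^*(t))$ is non-increasing, so its reciprocal is non-decreasing) and right-continuous (adjust on a null set / take the right-continuous version), so that $A(t)=\int_0^t a$ is a genuine Young function. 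With this choice, $N_1=\int_0^\infty f^*(t)\,dt$ times a constant — no, more precisely $N_1=\int_0^\infty g^{-1}(1/a(t))\,dt=\int_0^\infty f^*(t)\,dt$ on the relevant range, which need not be finite. So the choice must be refined.

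The refinement is the crux and I expect it to be the main obstacle: one must interpolate between "$A$ large enough that $N_\lambda<\infty$" and "$A$ small enough that $\varrho_A(f)\le1$, hence $f\in L^A$." The known quantity is $\int_0^\infty f^*w<\infty$; one wants $\int_0^\infty A(f^*)<\infty$ AND $\int_0^\infty g^{-1}(1/a)<\infty$ simultaneously. A standard device is to use the finiteness of $\int_0^\infty f^*w$ to produce, via absolute continuity of the integral, a function $t\mapsto\varepsilon(t)$ tending to $0$ (resp.\ $\infty$) slowly enough that $\int_0^\infty \varepsilon(t)^{-1}f^*(t)w(t)\,dt$ still converges — the classical "de la Vallée-Poussin"-type trick — and then build $a$ so that $g(f^*(t))\approx \varepsilon(t)/(f^*(t)\,\text{something})$, splitting the gain between the two integrals. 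Concretely I would: (i) handle separately the part of $(0,\infty)$ where $f^*$ is constant (equal to $\nrm f_\infty$ near $0$ if $f$ is bounded, equal to $0$ near $\infty$ if $f$ has bounded support), where the integrand of $N_\lambda$ can be taken to be a suitable constant times a characteristic function, matching the $t_0\nrm f_\infty$ term and the behaviour $g^{-1}(0)$; (ii) on the remaining interval define $a$ by inverting a slightly shrunk version of the relation $g^{-1}(1/a(t))=f^*(t)$, e.g.\ replace $f^*(t)$ by $\theta(t)f^*(t)$ for a carefully chosen $\theta\le1$ with $\theta\to0$, ensuring both $\int \theta f^* w$-type and $\int A(f^*)$-type integrals converge; (iii) verify $a$ is non-decreasing and right-continuous after this modification (monotonicity can be restored by taking a non-decreasing minorant, which only shrinks $A$ and helps the $\varrho_A$ bound while one must recheck $N_\lambda$), and finally (iv) assemble $A(t)=\int_0^t a$, invoke~\eqref{E:Young-as-integral}–\eqref{E:Young-trivial} to see $A$ is Young, check $\varrho_A(f)\le1$ so $\nrm f_{L^A}\le1$ and $f\in L^A$ by~\eqref{E:Orlicz-balls}, and check $N_\lambda<\infty$ by construction, so that Theorem~\ref{T:embedding-orlicz-into-endpoint-Lorentz} applies and the statement is proved. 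The delicate point throughout is keeping $a$ non-decreasing while simultaneously controlling two integrals against each other; the change of variables from Lemma~\ref{L:w} that turns $N_\lambda$ into an integral of $w$ against (a transform of) $f^*$ is what makes the bookkeeping tractable.
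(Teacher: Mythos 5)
Your plan correctly assembles the right ingredients (Lemma~\ref{L:LambdaE-G}, Lemma~\ref{L:w}, prescribing the derivative $a=A'$ and then invoking Fubini, \eqref{E:Orlicz-balls} and Theorem~\ref{T:embedding-orlicz-into-endpoint-Lorentz}), and you correctly diagnose that the naive choice $a(t)=1/\bigl(\lambda g(f^*(t))\bigr)$ fails because it forces $N_\lambda=\int_0^\infty f^*$, which need not converge. But the step you then defer --- ``interpolate between the two requirements by a de la Vall\'ee-Poussin-type slowly varying factor $\theta(t)$'' --- is precisely the content of the theorem, and you never produce the function that does the job, nor verify that monotonicity of $a$, the modular bound and the finiteness of $N_\lambda$ can all be met simultaneously by such a $\theta$. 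As it stands this is a genuine gap: the proposal reduces the theorem to an unproved claim of the same difficulty.

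The paper's proof fills exactly this gap with a concrete and rather slick choice: normalize $h=f/\lambda$ with $\lambda=2\nrm{f}_{\Lambda^E}$, so that Lemma~\ref{L:LambdaE-G} gives the single budget $\int_0^\infty G^{-1}(h_*)\le 1$, and then set $a(\tau)=w\bigl(h_*(\tau)\bigr)$, i.e.\ $a=1/\,g\bigl(G^{-1}(h_*)\bigr)$ rather than $1/g(h_*)$. The insertion of $G^{-1}$ (the ``integrated'' version of $g$ coming from \eqref{E:w-def}) is the missing device: it yields simultaneously $w(r)r\le\int_0^r w\le G^{-1}(r)$, so the modular satisfies $\int A(\abs{h})=\int_{\{h_*\ne0\}}w(h_*)h_*\le\int G^{-1}(h_*)\le1$ (hence $\nrm{f}_{L^A}\le\lambda$), and $g^{-1}\bigl(1/w(r)\bigr)=g^{-1}\bigl(g(G^{-1}(r))\bigr)\le G^{-1}(r)$ by \eqref{E:comp-lc}, so $N_1\le\int G^{-1}(h_*)\le1$. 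Both constraints are thus dominated by one and the same finite integral, with no splitting or slowly varying correction needed; monotonicity of $a$ is immediate since $h_*$ is non-increasing and $w$ is non-increasing. If you want to rescue your outline, this is the construction you should substitute for step (ii); also note in passing that the left-continuous inverse is $g^{-1}(t)=\inf\set{\tau\ge0:g(\tau)\ge t}$, not the supremum formula you wrote.
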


\begin{proof}
Let $f\in\Lambda^E$ be given, denote $\lambda=2\nrm{f}_{\Lambda^E}\ne0$ and set $h=f/\lambda$.
Let $G$ be the Young function associated with $E$ as in \eqref{E:G-def} and $G^{-1}$ the left-continuous inverse of~$G$.
Lemma~\ref{L:LambdaE-G} asserts that
\begin{equation} \label{E:Wh-one}
	\int_{0}^{\infty} G^{-1}(h_*)
		\le 2\nrm{h}_{\Lambda^E}
		= \frac2\lambda \nrm{f}_{\Lambda^E}
		= 1.
\end{equation}
Let $w$ denote the function from \eqref{E:w-def} and set $w(0)=\infty$ and $w(\infty)=\lim_{r\to\infty} w(r)$.
Since $w$ is nonincreasing, we get by \eqref{E:w-integral} that
\begin{equation} \label{E:W-concave}
	w(r)r\le \int_{0}^r w \le G^{-1}(r)
		\quad\text{for $r\in(0,\infty]$}.
\end{equation}
Now, we define $a\colon(0,\infty)\to[0,\infty]$ by
\begin{equation*}
	a(\tau) = w\bigl(h_*(\tau)\bigr)
		\quad\text{for $\tau\in(0,\infty)$}.
\end{equation*}
Then, as $h_*$ is non-increasing and $w$ is nonincreasing, $a$ is non-decreasing and $A(t)=\int_{0}^{t}a$ for $t\in[0,\infty]$ is a Young function.
We claim that $f\in L^A$.
Indeed,
\begin{equation*}
	\int_{0}^\infty A\left( \frac{\abs{f}}{\lambda} \right)
		= \int_{0}^\infty A(\abs{h})
		= \int_{\{h_*\ne0\}} w(h_*)h_*
		\le \int_{0}^\infty G^{-1}(h_*)
		\le 1,
\end{equation*}
where we used Fubini's theorem and inequalities~\eqref{E:W-concave} and \eqref{E:Wh-one}.
In conclusion, $\varrho(f/\lambda)\le 1$ and hence, by the definition of the Luxemburg norm, $\nrm{f}_{L^A}\le \lambda=2\nrm{f}_{\Lambda^E}$ proving that $f\in L^A$.

In the rest of the proof, we show that $a$ obeys condition~\eqref{E:union-function-inequality} with $N_1\le 1$.
First, observe that
\begin{equation} \label{E:g-invers-ineq}
	g^{-1}\left( \frac{1}{w(r)} \right)
		= g^{-1}\Bigl(g\bigl(G^{-1}(r)\bigr)\Bigr)
		\le G^{-1}(r)
	\quad\text{for $r\in[0,\infty]$},
\end{equation}
where $g^{-1}$ is the left-continuous inverse of $g(t)=E_\#(t)/t$.
Indeed, for $r\in(0,\infty)$, the equality in \eqref{E:g-invers-ineq} follows by definition~\eqref{E:w-def} of $w$, and the inequality is due to~\eqref{E:comp-lc}.
Next, $w(0)=\infty$ by definition, and $g^{-1}(0)=0$ and $G^{-1}(0)=0$ by the left-continuity.
Thus, \eqref{E:g-invers-ineq} holds also for $r=0$.
Finally, by definition, $w$ is left-continuous at infinity and hence~\eqref{E:g-invers-ineq} is satisfied also for $r=\infty$ by taking the limit as $r\to\infty$ due to left-continuity of $g^{-1}$ and $G^{-1}$.
Consequently,
\begin{equation*}
	N_1
		= \int_{0}^{\infty} g^{-1}\left( \frac{1}{a(t)} \right)\d t
		= \int_{0}^{\infty} g^{-1}\left( \frac{1}{w\bigl(h_*(t)\bigr)} \right)\d t
		\le \int_{0}^{\infty} G^{-1}\bigl(h_*(t)\bigr)\d t
		\le 1,
\end{equation*}
where the last inequality is due to~\eqref{E:Wh-one} and condition~\eqref{E:union-function-inequality} is satisfied.
\end{proof}

We can finally formulate the principal results of this section, namely that every strong/weak Orlicz space can be expressed as a union/intersection of smaller/larger Orlicz spaces.
The first theorem for the strong Orlicz space follows immediately from Theorems~\ref{T:embedding-orlicz-into-endpoint-Lorentz} and~\ref{T:union-absolutely-continuous}.
The result for the weak Orlicz space then follows by duality arguments.

\begin{theorem} \label{T:lambda-union}
Let $E$ be a quasi-convex function. Then
\begin{equation*}
	\Lambda^E
		= \bigcup \set*{L^A: \text{$A$ is a Young function such that $L^A \hra \Lambda^E$}}.
\end{equation*}
\end{theorem}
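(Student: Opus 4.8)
The plan is to prove the displayed set equality by verifying the two inclusions separately; with Theorems~\ref{T:embedding-orlicz-into-endpoint-Lorentz} and~\ref{T:union-absolutely-continuous} already available, each inclusion reduces to a one-line deduction.

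First I would dispose of the inclusion ``$\supseteq$'', which is immediate from the definitions. If $A$ is a Young function with $L^A\hra\Lambda^E$, then by the definition of an embedding there is a constant $C>0$ with $\nrm{f}_{\Lambda^E}\le C\nrm{f}_{L^A}$ for every $f\in L^A$; in particular $\nrm{f}_{\Lambda^E}<\infty$, so $f\in\Lambda^E$. Taking the union over all such $A$ shows that the right-hand side is contained in $\Lambda^E$.

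For the reverse inclusion ``$\subseteq$'', I would fix a nonzero $f\in\Lambda^E$. Theorem~\ref{T:union-absolutely-continuous} furnishes a Young function $A$ obeying condition~\eqref{E:union-function-inequality} and for which $f\in L^A$. Feeding this very condition~\eqref{E:union-function-inequality} into Theorem~\ref{T:embedding-orlicz-into-endpoint-Lorentz} yields $L^A\hra\Lambda^E$, so $A$ lies in the index family of the union and hence $f$ belongs to the union. It remains to treat $f=0$: since $E$ is quasi-convex, $\Lambda^E$ is nontrivial (it contains, for instance, characteristic functions of sets of sufficiently small measure, via the fundamental relation~\eqref{E:fundamental}), so applying the previous step to any nonzero element of $\Lambda^E$ exhibits at least one admissible $A$, and $0$ lies in every Orlicz space; thus $0$ is in the union too. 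This completes the proof.

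Since all the analytic substance has already been expended in Theorems~\ref{T:embedding-orlicz-into-endpoint-Lorentz} and~\ref{T:union-absolutely-continuous}, there is no genuine obstacle remaining at this stage. The only points calling for a moment's care are purely formal: checking that an embedding in the sense of the paper forces the set inclusion $L^A\subset\Lambda^E$, and the bookkeeping around the zero function and the nonemptiness of the index set. The truly hard part --- simultaneously constructing, for each $f\in\Lambda^E$, a Young function $A$ with $f\in L^A$ while keeping the ``union integral'' $N_\lambda$ finite so that the return embedding $L^A\hra\Lambda^E$ survives --- was carried out earlier through the Young-type inequality of Proposition~\ref{P:OL-inequality} together with the weight representation of Lemma~\ref{L:w}.
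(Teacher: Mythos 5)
Your proof is correct and is essentially the paper's own argument: the theorem is stated there as an immediate consequence of Theorems~\ref{T:embedding-orlicz-into-endpoint-Lorentz} and~\ref{T:union-absolutely-continuous}, exactly as you combine them, with the ``$\supseteq$'' inclusion being trivial. The extra remarks about the zero function and nonemptiness of the index family are harmless bookkeeping that the paper leaves implicit.
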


\begin{theorem}\label{T:marcinkiewicz-intersection}
Let $F$ be a quasi-convex function. Then
\begin{equation*}
    M^F = \bigcap\set*{L^B: \text{$B$ is a Young function such that $M^F\hra L^B$}}.
\end{equation*}
\end{theorem}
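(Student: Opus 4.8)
The plan is to derive the statement from Theorem~\ref{T:lambda-union} by passing to associate spaces, so that the union of Orlicz spaces contained in a strong Orlicz space turns, under association, into the intersection of Orlicz spaces containing the corresponding weak Orlicz space.

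First I would record the duality dictionary for the quasi-convex function $F$. Since $\widetilde F$ is again quasi-convex, \eqref{E:duality} gives $(M^F)'=\Lambda^{\widetilde F}$ and $(\Lambda^{\widetilde F})'=M^{\widetilde{\widetilde F}}$, and since $\widetilde{\widetilde F}\sim F$ we get $M^{\widetilde{\widetilde F}}=M^F$ by~\eqref{E:embeddings}, so $(\Lambda^{\widetilde F})'=M^F$. I would also use that $\widetilde A$ is a Young function whenever $A$ is (complementation being an involution on Young functions), that $(L^A)'=L^{\widetilde A}$, and that for $X,Y\in\LLM$ one has $X\hra Y$ if and only if $Y'\hra X'$. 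Together, these identify the index family of the intersection:
\[
	\set*{L^B : B \text{ a Young function}, \ M^F\hra L^B}
		= \set*{L^{\widetilde A} : A \text{ a Young function}, \ L^A\hra\Lambda^{\widetilde F}}.
\]
The inclusion of $M^F$ into the intersection is immediate, since $M^F\hra L^B$ forces $M^F\subseteq L^B$. For the reverse inclusion, let $f$ belong to the intersection. To conclude $f\in M^F=(\Lambda^{\widetilde F})'$ it is enough to show that $fg\in L^1$ for every $g\in\Lambda^{\widetilde F}$ --- the standard description of the associate space of a Banach function space, see~\citep{BS}. Given such a $g$, Theorem~\ref{T:lambda-union} applied to $\widetilde F$ provides a Young function $A$ with $g\in L^A$ and $L^A\hra\Lambda^{\widetilde F}$; passing to associates yields $M^F=(\Lambda^{\widetilde F})'\hra(L^A)'=L^{\widetilde A}$, so $L^{\widetilde A}$ is one of the spaces in our intersection and therefore $f\in L^{\widetilde A}$. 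Now $\nrm{g}_{L^A}$ and $\nrm{f}_{L^{\widetilde A}}$ are finite, and Young's inequality~\eqref{E:Young-inequality}, applied with $\tau=|g|/\nrm{g}_{L^A}$ and $t=|f|/\nrm{f}_{L^{\widetilde A}}$, then integrated over $(0,\infty)$ and combined with~\eqref{E:Orlicz-ball}, gives $\int_0^\infty|fg|\le 2\nrm{g}_{L^A}\nrm{f}_{L^{\widetilde A}}<\infty$. Hence $fg\in L^1$ for all $g\in\Lambda^{\widetilde F}$, so $f\in M^F$, and the two inclusions complete the proof.

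The load-bearing step, I expect, is the implication ``$fg\in L^1$ for every $g\in\Lambda^{\widetilde F}$ $\Longrightarrow$ $f\in(\Lambda^{\widetilde F})'$''. This is the familiar characterisation of the associate space, but it is not a set-theoretic triviality: its proof is a gliding-hump argument --- if $\nrm{f}_{(\Lambda^{\widetilde F})'}=\infty$, one chooses $g_n\ge 0$ with $\nrm{g_n}_{\Lambda^{\widetilde F}}\le 1$ and $\int_0^\infty|f|g_n\ge 4^n$, observes that $g=\sum_n 2^{-n}g_n$ still lies in $\Lambda^{\widetilde F}$ (here one uses that $\nrm{\cdot}_{\Lambda^{\widetilde F}}$ is equivalent to a complete norm, Lemma~\ref{L:LambdaE-G}), and derives the contradiction $\int_0^\infty|f|g=\sum_n 2^{-n}\int_0^\infty|f|g_n=\infty$. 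All the other ingredients are routine: the equivalence $\widetilde{\widetilde F}\sim F$, the behaviour of complementary Young functions, the transfer of embeddings to their associates via~\eqref{E:duality} and~\eqref{E:embeddings}, and the H\"older-type bound in Orlicz spaces flowing from Young's inequality.
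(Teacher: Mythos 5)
Your proposal is correct and follows essentially the same route as the paper's proof: the easy inclusion, then for the converse reducing membership in $M^F=(\Lambda^{\widetilde F})'$ to the integrability of products against $\Lambda^{\widetilde F}$ (the paper cites \citep[Chapter~1, Lemma~2.6]{BS} for exactly the step you flag as load-bearing), invoking Theorem~\ref{T:lambda-union} to place the given function of $\Lambda^{\widetilde F}$ into some $L^A\hra\Lambda^{\widetilde F}$, dualizing to get $M^F\hra L^{\widetilde A}$, and finishing with the H\"older/Young estimate. The only cosmetic differences are that the paper swaps the names of the two functions and bounds the product via the associate-norm definition with an equivalence constant rather than via Young's inequality with constant $2$.
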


\begin{proof}
The inclusion ``$\subset$'' is clear.
Conversely, let $g\in\bigcap\{L^B: M^F\hra L^B\}$. We need that
$g\in M^F$. To this end, it suffices that $\int_0^\infty fg<\infty$
for every $f\in \Lambda^{\widetilde F}=(M^F)'$
\citep[see][Chapter~1, Lemma~2.6]{BS}. Since, by Theorem~\ref{T:lambda-union},
$\Lambda^{\widetilde F}=\bigcap\{L^A:L^A\hra\Lambda^{\widetilde F}\}$,
there is $L^A$ such that $f\in L^A$ and $L^A\hra\Lambda^{\widetilde F}$.
Setting $B=\widetilde A$, we get by duality that $M^F\hra L^B$
and hence, by the assumption that $g\in L^B$. Therefore, by the definition
of the associate space \eqref{E:associate}, we conclude that
\begin{equation*}
	\int_0^\infty fg
		\le \nrm{f}_{L^A}\nrm{g}_{(L^A)'}
		\le C\nrm{f}_{L^A}\nrm{g}_{L^B} <\infty
\end{equation*}
for a certain constant $C$.
\end{proof}

\section{The Principal Alternative}\label{S:principal-alternative}

In this chapter, we will prove Theorem~\ref{T:intro-principal-alternative-for-spaces}.
We shall first define rearrangement-invariant spaces and list some of their basic properties.
The standard reference is \citep[Chapter 2]{BS}.
We will restrict our attention to spaces of measurable functions over $\sigma$-finite, non-atomic measurable spaces.
A fair portion of the theory may be considered in more general so-called resonant measurable spaces, or even arbitrary $\sigma$-finite spaces.
However, as no applications of such a general approach are known to us, we stick to the simplified setting.

We fix, for the entire chapter, two $\sigma$-finite non-atomic measurable spaces $\RM$ and $\SN$.
When only one of the spaces is needed, we stick to $\RM$.
When dealing with operators acting on and taking values in spaces of measurable functions, $\RM$ will play the role of the ``domain'' and $\SN$ of the~``target''.

\paragraph{Rearrangement-invariant Banach function spaces}

Denote by $\MRM$ the set of all measurable functions $f\colon\RR\to[-\infty,\infty]$, by $\measurablep\RM$ the set of all $f\in\MRM$ for which $f\ge 0$ \ae and by $\measurable_0\RM$ the set of all $f\in\MRM$ for which $\abs{f}<\infty$ \ae.
Let $\rho\colon\measurablep\RM\to[0,\infty]$ be a map.
Let $f,g,f_n\in\measurablep\RM$, $n\in\N$, $\lambda\ge 0$, and consider the following properties
\begin{enumerate}[label={\rm (P\arabic*)}]
	\item\label{en:norm}
		$\rho(f+g)\le \rho(f) + \rho(g) $; \quad
		$\rho(\lambda f)=\lambda \rho(f)$; \quad
		if $\rho(f)=0$, then $f=0$;
	\item if $f\le g$ \ae, then $\rho(f)\le \rho(g)$;
	\item if $f_n\nearrow f$ \ae, then $\rho(f_n)\nearrow f$;
	\item if $E\subset \RR$ is of finite measure, then $\rho(\chi_E)<\infty$;
	\item if $E\subset \RR$ is of finite measure, then there is some $C_E>0$ such that $\nrm{f}_{L^1(E)}\le C_E\rho(f)$;
	\item\label{en:ri} if $f^*=g^*$, then $\rho(f)=\rho(g)$.
\end{enumerate}

If $\rho$ satisfies the properties \ref{en:norm}-\ref{en:ri}, we shall call it a \emph{rearrangement-invariant Banach function norm} (\emph{\ri norm} for short).
The collection $X(\rho)$ of all $f\in\MRM$ such that $\rho(\abs{f})<\infty$ will be called a \emph{rearrangement-invariant Banach function space} (\emph{\ri space} for short).
The space $X(\rho)$ is linear and endowed with the complete norm
\begin{equation}\label{E:r-i-norm-extension}
	\nrm{f}_{X\RM} = \rho(\abs{f})
		\quad\text{for $f\in\MRM$.}
\end{equation}
When we say that $X\RM$ is an \ri space, we mean that there is some \ri norm $\rho$ such that $X\RM=X(\rho)$, in which case we consider $\nrm{\cdot}_{X\RM}$ defined as in \eqref{E:r-i-norm-extension}.
Often, when the underlying measure space is obvious from the context, we will write $X$ instead of $X\RM$ or we simply say ``$X$ is an \ri space over $\RM$''.
When $\RR$ is an interval $(a,b)$ and $\mu$ is the one-dimensional Lebesgue measure, we will write just~$X(a,b)$.

If $A$ is a quasi-convex function, then one may define the Lorentz, Orlicz and Marcinkiewicz spaces $\Lambda^A\RM$, $L^A\RM$, $M^A\RM$ as the obvious modification of the case $\RR=(0,\infty)$.
The Marcinkiewicz space $M^A\RM$ is an \ri space.
Moreover, the Lorentz space $\Lambda^A\RM$ and the Orlicz space $L^A\RM$ may be equivalently renormed so as to become \ri spaces.
If $A$ is a Young function, then the space $L^A\RM$ is an \ri space.

\paragraph{Embeddings and order}

Recall that if $X$, $Y$ are \ri spaces over $\RM$, then one has $X\subset Y$ if and only if $X\hra Y$.
In particular, the equality of sets $X=Y$ implies that the norms of $X$ and $Y$ are equivalent.
Whenever we refer to an ordering on some family of \ri spaces, we refer to ``$\subset$'', which is the same as ``$\hra$''.
In particular, we consider \ri spaces (over the same measure space) equal, if they are equal as sets, which means they also have equivalent norms.
We use the terms \emph{largest} and \emph{smallest} space when referring to the greatest and the least element of some family of \ri spaces.
When we want to somehow refer to both the properties of being smallest and largest simultaneously, we use the word \emph{optimal}.

\paragraph{Associate spaces}

For an \ri space $X\RM$, the functional $\nrm{\cdot}_{X'\RM}\colon \measurablep\RM\to[0,\infty]$ given by
\begin{equation}
	\nrm{f}_{X'\RM}
		= \sup\set[\big ]{\nrm{fg}_{L^1}: \nrm{f}_{X\RM}\le 1}
\end{equation}
is an \ri norm called \emph{associate norm} and the corresponding space $X'\RM$ is an \ri space called the \emph{associate space}.
The associate space of $X'\RM$ coincides with $X$, \ie $(X')'\RM=X\RM$ with equal norms.
If $X$, $Y$ are \ri spaces, then
\begin{equation}
	X \hra Y
		\quad\text{if and only if}\quad
	Y' \hra X'.
\end{equation}

\paragraph{The Luxemburg representation theorem}

The Luxemburg representation theorem, \cf \citep[Chapter 2, Theorem 4.10]{BS}, asserts that for any \ri space $X$ over $\RM$, there is a unique \ri space over $(0,\MR)$, denoted by $\widebar{X}$ and called the \emph{Luxemburg representation space} of $X$, such that
\begin{equation} \label{E:luxemburg-repr}
	\nrm{f}_{X\RM}
		= \nrm{f^*}_{\widebar{X}(0,\MR)}
	\quad\text{for all $f\in\MRM$}.
\end{equation}

It can be easily seen that $L^A(0,\MR)$, $\Lambda^A(0,\MR)$ and $M^A(0,\MR)$ are the Luxemburg representation spaces of $L^A\RM$, $\Lambda^A\RM$ and $M^A\RM$ respectively.

The strength of the Luxemburg representation theorem is that it enables to extend many results that hold for measurable functions on $(0,\infty)$ to general non-atomic $\sigma$-finite measure spaces.
Due to this fact, some results from Section~\ref{S:unions} extend in an obvious way to spaces over non-atomic $\sigma$-finite measure spaces.
For instance, embeddings between standard, weak and strong Orlicz spaces~\eqref{E:sandwich} hold on $\RM$ without any modification.
Embeddings between two spaces of the same type~\eqref{E:embeddings} hold as well when $\MR=\infty$, whereas the condition $B\prec A$ needs to be replaced by the same inequality near infinity when $\MR<\infty$.

Moreover, the final results of Section \ref{S:unions}, namely Theorems~\ref{T:lambda-union} and~\ref{T:marcinkiewicz-intersection} hold in the setting of a general $\sigma$-finite non-atomic space as well.
We will also use the Luxemburg representation to obtain from the aforementioned results proofs of the two subsequent Theorems~\ref{T:mordor-lorentz} and~\ref{T:mordor-marcinkiewicz}.

First, we need to observe an equivalence of three embeddings, namely
\begin{equation} \label{E:zesilovace}
	\Lambda^A\RM\hra L^B\RM
	\quad\text{iff}\quad
	L^A\RM\hra L^B\RM
	\quad\text{iff}\quad
	L^A\RM\hra M^B\RM.
\end{equation}
Indeed, the first and the last embeddings in \eqref{E:zesilovace} follow from $L^A\hra L^B$ via trivial embeddings~\eqref{E:sandwich}.
Conversely, using fundamental relation~\eqref{E:fundamental}, testing the embeddings on characteristics functions yields $A_\#\prec B_\#$ (globally if $\MR=\infty$ or near zero if $\MR<\infty$) that is equivalent to $B\prec A$ (globally, resp.\ near infinity) which therefore yields $L^A\hra L^B$.

\begin{theorem}[non-existence of a largest Orlicz space]\label{T:mordor-lorentz}
Let $E$ be a quasi-convex function and let $\FF$ be a set of measurable functions over $\RM$ satisfying $\Lambda^E\RM\subset\FF$.
If $L^E\RM\not\subset\FF$, then no largest Orlicz space contained in $\FF$ exists.
\end{theorem}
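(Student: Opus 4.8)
The plan is a short proof by contradiction resting on Theorem~\ref{T:lambda-union} and the equivalence of embeddings~\eqref{E:zesilovace}. Assume, contrary to the claim, that a largest Orlicz space contained in $\FF$ exists, and denote it by $L^B\RM$; there is no loss in taking $B$ to be a Young function. The first step is to recall that, by Theorem~\ref{T:lambda-union} in its version over a general $\sigma$-finite non-atomic measure space,
\[
	\Lambda^E\RM = \bigcup\set*{L^A\RM : \text{$A$ a Young function with $L^A\RM\hra\Lambda^E\RM$}}.
\]

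The second step exploits maximality. Each member $L^A\RM$ of the union above satisfies $L^A\RM\subset\Lambda^E\RM\subset\FF$ by hypothesis, so it is an Orlicz space contained in $\FF$, whence $L^A\RM\subset L^B\RM$. Taking the union over all admissible $A$ yields $\Lambda^E\RM\subset L^B\RM$, that is, $\Lambda^E\RM\hra L^B\RM$.

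The final step is to invoke~\eqref{E:zesilovace} with $A$ there replaced by $E$: the embedding $\Lambda^E\RM\hra L^B\RM$ is equivalent to $L^E\RM\hra L^B\RM$, hence to $L^E\RM\subset L^B\RM$. Since $L^B\RM\subset\FF$, this forces $L^E\RM\subset\FF$, contradicting the standing hypothesis $L^E\RM\not\subset\FF$; this contradiction completes the argument.

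I do not expect a genuine obstacle here: once Theorem~\ref{T:lambda-union} and~\eqref{E:zesilovace} are in place, the deduction is essentially immediate, and the real work has been front-loaded into those statements (in particular the Young-type inequality of Proposition~\ref{P:OL-inequality}). The only points that deserve a careful line are that ``largest Orlicz space contained in $\FF$'' is read set-theoretically, in keeping with the conventions fixed earlier in the section; that the union-of-Orlicz-spaces identity is genuinely available on $\RM$ and not merely on $(0,\infty)$, which is guaranteed by the Luxemburg representation remarks above; and that passing between set inclusions and continuous embeddings of these spaces is legitimate because all of them are, up to equivalent renorming, rearrangement-invariant spaces.
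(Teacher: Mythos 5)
Your proposal is correct and follows essentially the same route as the paper's proof: contradiction via the maximality of the assumed largest Orlicz space, the union identity of Theorem~\ref{T:lambda-union} (transferred to $\RM$ by Luxemburg representation, which the paper does element-wise via $f^*$), and the equivalence~\eqref{E:zesilovace} to pass from $\Lambda^E\hra L^B$ to $L^E\subset\FF$. No gaps.
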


\begin{proof}
All the spaces are considered over $\RM$ unless otherwise specified.
We proceed by contradiction.
Suppose that there is the largest Orlicz space, say $L^F$, contained in $\FF$.
We show that $\Lambda^E\hra L^F$.
To this end, let $f\in\Lambda^E$.
As $\Lambda^E(0,\MR)$ represents $\Lambda^E$, we have $f^*\in \Lambda^E(0,\infty)$.
Since, by Theorem~\ref{T:lambda-union}, $\Lambda^E(0,\infty)$ is a union of all smaller Orlicz spaces, there is a quasi-convex function $A$ such that $L^A(0,\infty)\hra\Lambda^E(0,\infty)$ and $f^*\in L^A(0,\infty)$.
Consequently, $f\in L^A$ and $L^A\hra\Lambda^E$.
Therefore, as $L^A\subset\FF$ and $L^F$ is the largest among Orlicz spaces, it follows that $L^A\subset L^F$ and hence $f\in L^F$.
Finally, $\Lambda^E\hra L^F$ is equivalent to $L^E\hra L^F$ as shown in~\eqref{E:zesilovace} and, consequently, $L^E\subset\FF$ which is a contradiction.\looseness=-1
\end{proof}

The following result is a dual version employing Theorem~\ref{T:marcinkiewicz-intersection} and its proof is similar.

\begin{theorem}[non-existence of a smallest Orlicz space]\label{T:mordor-marcinkiewicz}
Let $F$ be a quasi-convex function and let $\FF\subset M^F\RM$.
If $\FF\not\subset L^F\RM$, then there is no smallest Orlicz space containing $\FF$.
\end{theorem}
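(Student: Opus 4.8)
The plan is to argue by contradiction and to dualize the proof of Theorem~\ref{T:mordor-lorentz}, using Theorem~\ref{T:marcinkiewicz-intersection} in place of Theorem~\ref{T:lambda-union} and the associate-space form of the chain~\eqref{E:zesilovace} in place of~\eqref{E:zesilovace} itself. All spaces are over $\RM$. Suppose there is a smallest Orlicz space containing $\FF$, call it $L^B$; replacing any quasi-convex generator by its associated Young function (harmless by~\eqref{E:norm-sandwich} and the analogous fact for Marcinkiewicz spaces) we may assume that $B$ and $F$ are Young functions. The goal is to show $L^B\hra L^F$, which will force $\FF\subset L^B\subset L^F$ and contradict the hypothesis $\FF\not\subset L^F$.

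\emph{Step 1: $L^B\hra M^F$.} By Theorem~\ref{T:marcinkiewicz-intersection}, valid over $\RM$ (as recorded after the Luxemburg representation theorem; alternatively one reduces to $(0,\MR)$ through~\eqref{E:luxemburg-repr} exactly as in the proof of Theorem~\ref{T:mordor-lorentz}),
\begin{equation*}
	M^F\RM = \bigcap\set*{L^C\RM: \text{$C$ is a Young function with $M^F\RM\hra L^C\RM$}}.
\end{equation*}
For every such $C$ we have $\FF\subset M^F\hra L^C$, so $L^C$ is an Orlicz space containing $\FF$ and minimality of $L^B$ yields $L^B\hra L^C$. Intersecting over all admissible $C$ gives $L^B\subset M^F$ as sets, and hence $L^B\hra M^F$, since $L^B$ (being generated by a Young function) and $M^F$ are \ri spaces and set inclusion between \ri spaces is the same as continuous embedding.

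\emph{Step 2: upgrade to $L^B\hra L^F$.} Taking associate spaces in~\eqref{E:zesilovace} and using $(\Lambda^A)'=M^{\widetilde A}$, $(L^A)'=L^{\widetilde A}$, $(M^A)'=\Lambda^{\widetilde A}$ together with the fact that $X\hra Y$ if and only if $Y'\hra X'$, one obtains the dual chain: for Young functions $C$ and $D$,
\begin{equation*}
	L^D\RM\hra M^C\RM
	\quad\text{iff}\quad
	L^D\RM\hra L^C\RM
	\quad\text{iff}\quad
	\Lambda^D\RM\hra L^C\RM .
\end{equation*}
Applying the equivalence of the first two embeddings with $D=B$ and $C=F$ turns $L^B\hra M^F$ into $L^B\hra L^F$; since $\FF\subset L^B$, this gives $\FF\subset L^F$, the desired contradiction. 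The only genuinely delicate point is Step~1 — that the intersection of all Orlicz overspaces of $M^F$ really collapses back to $M^F$, and that this identity is legitimate over the given non-atomic $\sigma$-finite measure space — but this is precisely the content of Theorem~\ref{T:marcinkiewicz-intersection} together with the Luxemburg representation~\eqref{E:luxemburg-repr}; everything else is a routine dualization of the argument for Theorem~\ref{T:mordor-lorentz}.
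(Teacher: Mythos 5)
Your proposal is correct and follows essentially the same route as the paper: assume a smallest Orlicz space exists, use Theorem~\ref{T:marcinkiewicz-intersection} (via the Luxemburg representation) to conclude it embeds into $M^F$, then upgrade to an embedding into $L^F$ and contradict $\FF\not\subset L^F\RM$. The only cosmetic difference is that your Step~2 dualizes~\eqref{E:zesilovace}, which is unnecessary since the equivalence $L^A\hra M^B$ iff $L^A\hra L^B$ is already stated there and is what the paper invokes directly.
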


\begin{proof}
Assume, by contradiction, that $L^E$ is the smallest Orlicz space containing~$\FF$.
We claim that $L^E\hra M^F$.
Indeed, if $B$ is a quasi-convex function such that $M^F\hra L^B$, then $\FF\subset L^B$ and, as $L^E$ is the smallest Orlicz space containing $\FF$, we have that $L^E\subset L^B$.
Taking the intersection over all such $B$, Theorem~\ref{T:marcinkiewicz-intersection} with Luxemburg representation theorem yield that $L^E\subset M^F$ and hence $L^E\hra M^F$.
Using \eqref{E:zesilovace}, $L^E\hra L^F$ and therefore $\FF\subset L^F$ which is impossible.
\end{proof}

\paragraph{The fundamental function, fundamental Orlicz space and fundamental level}

As $\RM$ is non-atomic, for any $t\in [0,\MR]$ there exists a measurable $E\subset\RR$ with $\mu(E)=t$, see~\eg~\citep[Lemma~2]{Hal:47}.
Hence, it is possible to define
\begin{equation}
	\varphi_X(t)=\nrm{\chi_E}_X
		\quad\text{for $t\in[0,\MR]$},
\end{equation}
where $E\subset\RR$ is an arbitrary measurable subset with $\mu(E)=t$. If $\MR<\infty$, we set
\begin{equation}
	\varphi_X(t) = \varphi_X\bigl(\MR\bigr)
		\quad\text{for $t\in[\MR,\infty]$}.
\end{equation}
The function $\varphi_X$ is called the \emph{fundamental function of the space $X$}.
We shall say that two \ri spaces over $\RM$ are on the \emph{same fundamental level} if their fundamental functions are equivalent, \ie if they mutually dominate each other up to a multiplicative constant.
In other words, if $X$ and $Y$ are on the same fundamental level, their norms are indistinguishable on characteristic functions and in this sense $X$ and $Y$ are ``close'' to each other.
Fundamental functions of an \ri space $X$ and its associate space $X'$ are related by the formula
\begin{equation}\label{E:fundamental-associate}
	\varphi_X(t)\varphi_{X'}(t) = t
		\quad\text{for $t\in[0,\mu(R)]$.}
\end{equation}

For any \ri space $X$, the fundamental function $\varphi_X$ is always non-decreasing, continuous on $(0,\infty]$ with $\varphi_X(0)=0$.
Moreover, the function $t\mapsto {t}/{\varphi_X(t)}$ is non-decreasing and therefore, there exists a unique quasi-convex function $A$ such that $\smash{A^{-1}_\#=\varphi_X}$.
Thus, we define the \emph{fundamental Orlicz space} of $X$ by $L(X)=L^A\RM$ for this $A$ and analogously $\Lambda(X)=\Lambda^A\RM$ and $M(X)=M^A\RM$, the \emph{fundamental Lorentz} and \emph{fundamental Marcinkiewicz} spaces, respectively.
Recalling equalities~\eqref{E:fundamental}, we infer that
\begin{equation} \label{E:fundamental-ri}
	\varphi_{\Lambda^A}
		= \varphi_{L^A}
		= \varphi_{M^A}
		= A^{-1}_\#
\end{equation}
and the spaces $X$, $L(X)$, $\Lambda(X)$ and $M(X)$ are on the same fundamental level.
Note that embeddings \eqref{E:embeddings} and \eqref{E:zesilovace} are fully governed by the inequalities between the involved quasi-convex functions which equivalently translates via \eqref{E:fundamental-ri} and \eqref{E:domination} to the inequalities between fundamental functions, namely
\begin{equation} \label{E:fundamental-inequalities}
	L^A\hra L^B
	\quad\text{iff}\quad
	\varphi_{L^B} \lesssim \varphi_{L^A}
\end{equation}
\etc.

For a Luxemburg representation space $\widebar{X}$ of $X\RM$, one has $\varphi_X=\varphi_{\widebar{X}}$.
This fact, together with \cite[Chapter 2, Theorem 5.13]{BS} yields the following sandwich
(\cf embeddings~\eqref{E:sandwich})
\begin{equation} \label{E:Lorentz-Marcinkiewicz-Sandwich}
	\Lambda(X)\hra X \hra M(X)
\end{equation}
with embedding constants independent of $X$.

While the set-wise relation of $X$ with $\Lambda(X)$ and $M(X)$ is immutable, the relation of $X$ and $L(X)$ could be, in principle, arbitrary.
It may happen that $X\hra L(X)$, $L(X)\hra X$ or even that $X$ and $L(X)$ are incomparable.

Now, after all the necessary preliminaries, we are ready to prove Theorem~\ref{T:intro-principal-alternative-for-spaces}.

\begin{proof}[Proof of Theorem~\ref{T:intro-principal-alternative-for-spaces}]
Let $L(X)\subset X$ and assume that $A$ is a quasi-convex function obeying $L^A\subset X$.
Then $L^A\hra X$ and $\varphi_{L(X)}=\varphi_X\lesssim\varphi_{L^A}$.
Hence, by~\eqref{E:fundamental-inequalities}, we also have $L^A\hra L(X)$ and $L(X)$ is the largest among all Orlicz spaces contained in $X$.
If $L(X)\not\subset X$, then we recall that $\Lambda(X)\subset X$ always holds and the assertion follows by Theorem~\ref{T:mordor-lorentz} applied to $\FF=X$.
The proof of \ref{en:PA-spaces-target} follows analogously, using Theorem~\ref{T:mordor-marcinkiewicz}.
\end{proof}

\section{Principal Alternative for Sobolev Embeddings}\label{S:Principal-alternative-for-sobolev-embeddings}

In this section, we shall focus on the optimality of Orlicz spaces in Sobolev embeddings.
We shall consider here the full-gradient Sobolev spaces $W^mX$, as opposed to those mentioned in the introduction, denoted $V^m_0X$ and involving only the highest-order derivatives and functions vanishing on the boundary of the underlying domain.
We begin with formal definitions of the spaces in question.

\paragraph{Sobolev spaces}

Let $\Omega\subset\R^n$, $n\in\N$, be an open set and let $X(\Omega)$ be an \ri space.
The \emph{Sobolev space} $W^mX(\Omega)$ of order $m\in\N$ is defined as a collection of all weakly differentiable functions $u\colon\Omega\to\R$ such that $\abs{\nabla^k u}\in X(\Omega)$ for every $k=0,1,\dots,m$, where $\nabla^ku=({\partial^k u}/{\partial x_1^{\alpha_1}\cdots\partial x_k^{\alpha_k}})_{\abs{\alpha}=k}$, $\nabla^0u=u$, and $\abs{\,\cdot\,}$ stands for the $n$-dimensional Euclidean norm.
We furnish the space $W^mX(\Omega)$ with the norm $\nrm{u}_{W^mX(\Omega)}=\sum_{k=0}^m\nrm[\big ]{\abs{\nabla^k u}}_{X(\Omega)}$.

In the particular case when $X(\Omega)=L^A(\Omega)$ is an Orlicz space, we call $W^mX(\Omega)=W^mL^A(\Omega)$ an \emph{Orlicz--Sobolev} space.
We also write just $W^mX$ when the set $\Omega$ is clear from the context.

\paragraph{Sobolev embeddings}

We will consider Sobolev embeddings of the form
\begin{equation} \label{E:sobolev}
	W^m X(\Omega) \hra Y(\Omega),
\end{equation}
where $m,n\in\N$, $m\le n-1$, $\Omega$ is a bounded open set in $\R^n$ equipped with the Lebesgue measure, and $X(\Omega),Y(\Omega)$ are rearrangement-invariant spaces.
For simplicity, we will restrict ourselves here only to those sets $\Omega$ which have Lipschitz boundary, as our aim is to show a prototypical use of our ``principal alternatives''.
Variants for other types of embeddings are available, but some of them are rather technical.
We will briefly discuss this matter in Remark~\ref{R:other-sobolev-embeddings}.
Moreover, we shall assume, without loss of generality, that $\abs{\Omega}=1$.
This restriction is adopted for technical convenience only and has no impact on the spaces appearing in the embeddings.

\paragraph{Optimal \ri spaces}

Our major concern is the optimal form of Sobolev embeddings within \ri spaces, and within Orlicz spaces (both on the domain and on the target sides).
For \ri spaces, the available theory is reasonably complete, see \eg~\citep{Edm:00,Ker:06}, or \citep{Cia:15}.
In particular, it is known that the optimal \ri target space and the optimal domain \ri domain space always exist and can be (almost) explicitly described.
Namely, given an \ri~space $Y$, the optimal \ri~domain space $X$ in embedding~\eqref{E:sobolev} exists and obeys
\begin{equation} \label{E:optimal-ri-domain}
	\nrm{u}_{X(\Omega)}
		= \sup_{h} \nrm*{\int_{t}^{1} h(s)s^{\mn-1}\dd s}_{\widebar Y(0,1)}
	\quad\text{for $u\in\MM(\Omega)$},
\end{equation}
in which the supremum is taken over all $h\in\MM_+(0,1)$ such that $h^*=u^*$.
Conversely, given an \ri space $X$, the optimal \ri target space $Y$ in \eqref{E:sobolev} exists and satisfies
\begin{equation} \label{E:optimal-ri-range}
	\nrm{u}_{Y'(\Omega)}
		= \nrm[\big ]{t^\mn u^{**}(t)}_{\widebar X'(0,1)}
	\quad\text{for $u\in\MM(\Omega)$}.
\end{equation}
For more details, see \eg \cite[Theorem~A and Theorem~3.3]{Ker:06}.
\medskip

When restricted to Orlicz spaces, \citet{Cia:96} proved that, to a given Orlicz space $L^A$, the optimal Orlicz target space $L^B$ in the embedding
\begin{equation}\label{E:sobolev-embedding-vanishing-orlicz}
	W^{m}L^A(\Omega)\hra L^B(\Omega)
\end{equation}
for the first order $m=1$ always exists and possesses an explicit form.
This result was later extended to embeddings of any order \citep{Cia:06}, to boundary trace embeddings \citep{Cia:10}, to first-order Gaussian--Sobolev embeddings \citep{Cia:09}, to higher-order Gaussian--Sobolev embeddings and more general embeddings on probability spaces \citep{Cia:15}, to embeddings of Sobolev spaces in spaces endowed with Frostman measures \citep{Cia:20-b}, and to embeddings of fractional Orlicz--Sobolev spaces \citep{Alb:21}.

On the domain side, the situation is more subtle.
\citet[Theorem~4.3]{Pic:98} showed (see also~\cite[Theorem~4.5]{Pic:02}) that to the given target $L^B=\exp L^{n'}$, no optimal Orlicz domain space $L^A$ in embedding~\eqref{E:sobolev-embedding-vanishing-orlicz} with $m=1$ exists.
The proof departs from a given Young function $A$ such that embedding~\eqref{E:sobolev-embedding-vanishing-orlicz} holds, and a rather technical construction is presented that produces another Young function, say $A_1$, such that $L^{A_1}$ is strictly larger than $L^A$, and yet the embedding $W^{m}L^{A_1}(\Omega)\hra Y(\Omega)$ still holds.
\citet{CiP:98} used similar construction to obtain an analogous result, this time when the prescribed target space is $L^B=L^\infty$.

The next question, which was successfully studied in this direction, was for which general Marcinkiewicz (or weak Orlicz) spaces $Y$ there exists the optimal Orlicz space $L^A$ in the embedding
\begin{equation}\label{E:sobolev-embedding-marcinkiewicz}
	W^{m}L^A(\Omega)\hra Y(\Omega).
\end{equation}
A complete characterisation of such spaces is available due to~\citet{Mus:16}, and \citet{Cia:19} extended this result to the case when $Y$ is an Orlicz space.
In all the above-mentioned works, the ``negative'' result, \ie the non-existence of optimal domain Orlicz space, was proved in a similar way, namely using a technical ad hoc construction which differed from case to case and which in each case had to be specifically tailored in order to fit the particular situation at hand.
Such a situation of course is not ideal and calls for a simple comprehensive treatment based on some universal method.

We are going now to offer such an approach as one of the possible variants of the principal alternative.
The main point is that no technical construction is needed any longer.
We take a lateral point of view, and instead of looking at a particular case at hand, we consider general relations between groups of function spaces.
An important ingredient is the knowledge of optimal \ri~spaces.
Among applications, we shall present new proofs of known results (Theorem~\ref{T:sobolev-to-orlicz}) as well as solutions to open problems (Theorem~\ref{T:bw-alternative}).
In order to achieve the latter, we shall carry out a careful analysis of the relations between fundamental functions of optimal spaces on the domain and the target side of a Sobolev embedding, culminating in Theorem~\ref{T:nonexistence-optimal-orlicz-on-level}, in which an interesting inheritance of the non-existence of an optimal Orlicz domain space is extrapolated from a Marcinkiewicz space along the corresponding fundamental level.

The key initial step is the following result in the spirit of the principal alternative for \ri~spaces (Theorem~\ref{T:intro-principal-alternative-for-spaces}), specified for Sobolev embeddings.

\begin{theorem}[principal alternative for Sobolev embeddings] \label{T:principal-alternative-sobolev-embeddings}
Let $m,n\in\N$, let $\Omega\subset\R^n$ be a bounded domain with Lipschitz boundary, and let $X(\Omega)$ and $Y(\Omega)$ be \ri~spaces.
\begin{enumerate}
	\item\label{en:PA-sobolev-embeddings-domain} If $X(\Omega)$ is the largest \ri~space rendering embedding~\eqref{E:sobolev} true, then either $L(X)\subset X$ and $L^A=L(X)$ is the largest Orlicz space in
\begin{equation} \label{E:embedding-generic-domain}
    W^{m}L^A(\Omega) \hra Y(\Omega),
\end{equation}
or no largest Orlicz space $L^A$ rendering \eqref{E:embedding-generic-domain} true exists.
	\item\label{en:PA-sobolev-embeddings-target} If $Y$ is the smallest \ri~space rendering embedding~\eqref{E:sobolev} true, then either $Y\subset L(Y)$ and $L^B=L(Y)$ is the smallest Orlicz space in
\begin{equation} \label{E:embedding-generic-target}
    W^{m}X(\Omega) \hra L^B(\Omega),
\end{equation}
or no smallest Orlicz space $L^B$ rendering \eqref{E:embedding-generic-target} true exists.
\end{enumerate}
\end{theorem}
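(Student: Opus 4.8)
The plan is to deduce both parts from the abstract principal alternative, Theorem~\ref{T:intro-principal-alternative-for-spaces}, which is available here because $\Omega$ endowed with the Lebesgue measure is a $\sigma$-finite non-atomic measure space. The only structural input needed is the monotonicity of the full-gradient Sobolev construction in its base space: if $Z_1(\Omega)\hra Z_2(\Omega)$ are \ri~spaces, then $W^m Z_1(\Omega)\hra W^m Z_2(\Omega)$, since $\nrm{u}_{W^m Z(\Omega)}=\sum_{k=0}^m\nrm{\abs{\nabla^k u}}_{Z(\Omega)}$ is monotone in the norm of $Z$.

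For part~\ref{en:PA-sobolev-embeddings-domain}, fix the target $Y$ and let $X$ be the largest \ri~space rendering \eqref{E:sobolev} true. The first step is the set identity
\[
	\{\, L^A(\Omega) \text{ Orlicz}: W^m L^A(\Omega)\hra Y(\Omega)\,\}
		= \{\, L^A(\Omega) \text{ Orlicz}: L^A(\Omega)\subset X(\Omega)\,\}.
\]
The inclusion ``$\supseteq$'' is immediate from monotonicity: $L^A(\Omega)\subset X(\Omega)$ gives $W^m L^A(\Omega)\hra W^m X(\Omega)\hra Y(\Omega)$. For ``$\subseteq$'' one uses that every Orlicz space is, after the equivalent renorming recorded after \eqref{E:norm-sandwich}, an \ri~space, so the assumed maximality of $X$ among \ri~spaces rendering \eqref{E:sobolev} true forces $L^A(\Omega)\subset X(\Omega)$ whenever $W^m L^A(\Omega)\hra Y(\Omega)$. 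Hence a largest Orlicz space rendering \eqref{E:embedding-generic-domain} true exists if and only if a largest Orlicz space contained in $X$ does, and they coincide; Theorem~\ref{T:intro-principal-alternative-for-spaces}\ref{en:PA-spaces-domain} applied to $X$ then yields exactly the stated dichotomy, with $L^A=L(X)$ in the positive case.

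Part~\ref{en:PA-sobolev-embeddings-target} is dual. Fixing the domain $X$ and letting $Y$ be the smallest \ri~space rendering \eqref{E:sobolev} true, the same argument --- now with $W^m X(\Omega)\hra Y(\Omega)$ and monotonicity supplying one inclusion, and minimality of $Y$ among \ri~targets the other --- shows that the Orlicz spaces $L^B(\Omega)$ rendering \eqref{E:embedding-generic-target} true are precisely those with $Y(\Omega)\subset L^B(\Omega)$. Theorem~\ref{T:intro-principal-alternative-for-spaces}\ref{en:PA-spaces-target} applied to $Y$ then gives the dichotomy, with $L^B=L(Y)$ in the positive case.

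The upshot is that the theorem is a soft consequence of Theorem~\ref{T:intro-principal-alternative-for-spaces} together with the existence of the optimal \ri~domain and target recalled in \eqref{E:optimal-ri-domain}--\eqref{E:optimal-ri-range}; in particular, no construction tailored to a specific target or domain is needed, which is precisely the gain over earlier approaches. The two places that demand care --- the nearest thing to an obstacle --- are verifying that Orlicz spaces genuinely sit inside the \ri~framework of Theorem~\ref{T:intro-principal-alternative-for-spaces} (so that the extremality of $X$ and $Y$ can be invoked for them), which is handled by the renorming after \eqref{E:norm-sandwich}, and the monotonicity of $W^m$ in its base space, which is routine for the full-gradient spaces treated here but would have to be re-examined for the $V^m_0$ variants mentioned in Remark~\ref{R:other-sobolev-embeddings}.
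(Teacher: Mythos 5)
Your argument is correct and is essentially the paper's own proof: both reduce the statement to Theorem~\ref{T:intro-principal-alternative-for-spaces} by identifying the Orlicz spaces admissible in \eqref{E:embedding-generic-domain} (resp.\ \eqref{E:embedding-generic-target}) with the Orlicz spaces contained in $X$ (resp.\ containing $Y$), using that Orlicz spaces are \ri~spaces after the equivalent renorming. The only cosmetic difference is that the paper quotes the full equivalence $W^mZ_1(\Omega)\hra W^mZ_2(\Omega)\Leftrightarrow Z_1(\Omega)\hra Z_2(\Omega)$ from \citep[Proposition~5.5]{Cia:19}, whereas you note that the trivial monotonicity direction together with the assumed extremality of $X$, resp.\ $Y$, already suffices.
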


\begin{proof}
Let us show~\ref{en:PA-sobolev-embeddings-domain}.
First observe that, for \ri spaces $X$ and  $Y$, one has $W^mX(\Omega)\hra W^mY(\Omega)$ if and only if $X(\Omega)\hra Y(\Omega)$.
The detailed proof of this is given in~\citep[Proposition~5.5]{Cia:19} for Orlicz spaces, and its extension to all \ri spaces is straightforward.
Therefore, by the optimality of $X(\Omega)$ among \ri~spaces in embedding~\eqref{E:sobolev}, we have $W^{m}L^A(\Omega) \hra Y(\Omega)$ if and only if $L^A(\Omega)\hra X(\Omega)$.
Hence, $L^A(\Omega)$ is the largest Orlicz space in $W^{m}L^A(\Omega) \hra Y(\Omega)$ if and only if $L^A(\Omega)$ is the largest Orlicz space contained in $X(\Omega)$.
Consequently, the assertion follows from part~\ref{en:PA-spaces-domain} of Theorem~\ref{T:intro-principal-alternative-for-spaces}.

The proof of \ref{en:PA-sobolev-embeddings-target} follows from part~\ref{en:PA-spaces-target} of Theorem~\ref{T:intro-principal-alternative-for-spaces} in an analogous way.
\end{proof}

We shall now present a key technical result on the fundamental function of a function space with an operator-induced norm. It does not concern Sobolev embeddings directly, but will be applied to them later.
It is certainly of independent interest and is likely to be applicable to the existence of optimal domain spaces in many other tasks as well.

\begin{theorem}[fundamental function of an operator-induced space]\label{T:fundamental-optimal-domain-hardy}
Let $\alpha\in(0,1)$, $\beta\in(0,\infty)$ and either $L=1$ or $L=\infty$.
For an \ri~space $Y(0,L)$ let $X(0,L)$ be defined by
\begin{equation*}
	\nrm{f}_{X(0,L)}
		= \sup_{h}\nrm*{\int_{\tau^{\beta}}^{L}h(s)s^{\ai}\,\d s}_{Y(0,L)}
	\quad\text{for $f\in\MM(0,L)$},
\end{equation*}
where the supremum is extended over all functions $h\in\MM(0,L)$ such that $h^*=f^*$.
Then $X(0,L)$ is an~\ri~space, and one has
\begin{equation} \label{E:fundamental-optimal-domain-hardy}
	\vpx(t)\approx t\sup_{s\in(t,\infty)}\vp_Y\bigl(s^{\ib[i]}\bigr)s^{\ai}
		\quad\text{for $t\in(0,L)$.}
\end{equation}
\end{theorem}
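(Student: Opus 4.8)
The plan is to dispose quickly of the assertion that $X(0,L)$ is an \ri~space — its defining functional has exactly the shape of the optimal-domain functional in~\eqref{E:optimal-ri-domain}, so the properties \ref{en:norm}--\ref{en:ri} follow as in the standard theory of optimal domains (\cf~\citep{Edm:00} or~\cite[Theorem~3.3]{Ker:06}), the only point that is not entirely routine being finiteness, which for $L=1$ is immediate from the crude bound $\int_{\tau^\beta}^{\tau^\beta+t}s^{\ai}\dd s\le t^\alpha/\alpha$ — and then to concentrate on~\eqref{E:fundamental-optimal-domain-hardy}. Since the norm of $X$ depends on $f$ only through $f^*$, evaluating it on $f=\chi_E$ with $\mu(E)=t$ forces $h^*=\chi_{(0,t)}$, hence $\abs{h}=\chi_F$ for some $F\subset(0,L)$ with $\abs{F}=t$; because $\abs*{\int_{F\cap(\tau^\beta,L)}\operatorname{sgn}(h)\,s^{\ai}\dd s}\le g_F(\tau):=\int_{F\cap(\tau^\beta,L)}s^{\ai}\dd s$ and by the lattice property of $Y$, the extremal sign choice is $h=\chi_F$, and $g_F$ is non-increasing. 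This yields the description
\[
	\vpx(t)=\sup\set[\big ]{\,\nrm{g_F}_{Y(0,L)}:F\subset(0,L),\ \abs{F}=t\,}.
\]

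For the lower bound I would, for each $s\in[2t,L]$, test against the ``bathtub at scale $s$'' set $F_s=(s-t,s)$. On $F_s$ one has $u^{\ai}\ge s^{\ai}$, so $\int_{F_s}u^{\ai}\dd u\ge t\,s^{\ai}$; and whenever $\tau^\beta\le s-t$ — which holds for $\tau\in(0,2^{-\ib[i]}s^{\ib[i]})$ since $s\ge2t$ — the whole of $F_s$ sits above $\tau^\beta$, so $g_{F_s}(\tau)\ge t\,s^{\ai}$. Thus $g_{F_s}\ge t\,s^{\ai}\chi_{(0,2^{-\ib[i]}s^{\ib[i]})}$; taking $Y$-norms and using the quasi-concavity of $\vp_Y$ to absorb the constant $2^{-\ib[i]}$ gives $\vpx(t)\gtrsim t\,s^{\ai}\vp_Y(s^{\ib[i]})$ for every such $s$. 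The residual ranges $s\in(t,2t)$ and, when $L<\infty$, $s\ge L$ cost only a bounded factor, since $s^{\ai}$ and $\vp_Y(s^{\ib[i]})$ each change by a bounded factor there; a short separate argument using the monotonicity of $\vpx$ covers $t$ close to $L$ when $L<\infty$. Hence $\vpx(t)\gtrsim t\sup_{s\in(t,\infty)}\vp_Y(s^{\ib[i]})s^{\ai}$.

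For the upper bound, given an arbitrary competitor $F$, I would split it dyadically, $F_j=F\cap(2^j,2^{j+1}]$, so that $t_j:=\abs{F_j}$ obeys $\sum_j t_j=t$ and, crucially, $t_j\le2^j$. On $F_j$ we have $s^{\ai}\le2^{j(\ai)}$, while $F_j$ contributes nothing to $g_F(\tau)$ once $\tau^\beta\ge2^{j+1}$; hence $g_F(\tau)\le\sum_j 2^{j(\ai)}t_j\,\chi_{(0,2^{(j+1)/\beta})}(\tau)$, and, by the triangle inequality and quasi-concavity of $\vp_Y$,
\[
	\nrm{g_F}_{Y(0,L)}\lesssim\sum_j t_j\,\psi(2^j),\qquad\text{where}\quad\psi(s):=s^{\ai}\vp_Y(s^{\ib[i]}).
\]
I would then split the sum according to whether $2^j\ge t$ or $2^j<t$. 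For $2^j\ge t$ one has $\psi(2^j)\le M:=\sup_{s\in(t,\infty)}\psi(s)$, so those terms sum to $\lesssim M\sum_j t_j=Mt$. For $2^j<t$ the measure constraint yields $t_j\,\psi(2^j)\le2^j\,\psi(2^j)=2^{j\alpha}\vp_Y(2^{j/\beta})$, and the sequence $j\mapsto2^{j\alpha}\vp_Y(2^{j/\beta})$ grows by a factor $\ge2^\alpha$ per step because $\vp_Y$ is non-decreasing, so this part of the sum is controlled by its largest term $\approx t^\alpha\vp_Y(t^{\ib[i]})=t\,\psi(t)\le tM$. Therefore $\nrm{g_F}_{Y(0,L)}\lesssim tM$ uniformly in $F$, that is $\vpx(t)\lesssim t\sup_{s\in(t,\infty)}\vp_Y(s^{\ib[i]})s^{\ai}$, which together with the lower bound establishes~\eqref{E:fundamental-optimal-domain-hardy}.

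The main obstacle is the upper bound, and within it the small scales $2^j<t$: the lazy estimate $\sum_j t_j\psi(2^j)\le t\sup_{s>0}\psi(s)$ is useless since $\psi(s)\to\infty$ as $s\to0_+$ (recall $\ai<0$), and the whole point is that the geometric constraint $\abs{F_j}\le2^j$ forces exactly the cancellation upgrading $\sup_{s>0}\psi$ to $\sup_{s>t}\psi$. A secondary nuisance is bookkeeping: one must keep the cases $L=1$ and $L=\infty$ apart and use the convention extending $\vp_Y$ by the constant $\vp_Y(L)$ beyond $L$, which is precisely what makes it legitimate to write the supremum in~\eqref{E:fundamental-optimal-domain-hardy} over all of $(t,\infty)$.
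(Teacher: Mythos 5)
Your argument is correct, and it reaches \eqref{E:fundamental-optimal-domain-hardy} by a genuinely different route than the paper. After the (shared, and legitimate) reduction of the optimal-domain-type norm to indicators, you work entirely with sets: the lower bound comes from a family of ``bathtub'' intervals $F_s=(s-t,s)$ at every scale $s\ge 2t$, which directly produces each value $t\,s^{\alpha-1}\vp_Y(s^{1/\beta})$ using only the lattice property of $Y$ and quasi-concavity of $\vp_Y$; the paper instead tests with the single function $h=\chi_{(0,t)}$, invokes boundedness of the dilation operator on $Y$ to get $\vpx(t)\gtrsim t^{\alpha}\vp_Y(t^{1/\beta})$, and then upgrades to the supremum over $s>t$ via the monotonicity of $t\mapsto\vpx(t)/t$. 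For the upper bound you decompose an arbitrary competitor $F$ dyadically and use the triangle inequality in $Y$, with the geometric constraint $\abs{F\cap(2^j,2^{j+1}]}\le 2^j$ and $\alpha>0$ giving a convergent geometric series at small scales; the paper passes instead to the fundamental Lorentz space via $\Lambda(Y)\hra Y$ (with universal constants), uses the weight representation \eqref{E:lambda-norm-eq}, Fubini and the Hardy--Littlewood inequality to reduce to $\int_0^t\sup_{\tau>s}\vp_Y(\tau^{1/\beta})\tau^{\alpha-1}\dd s$, and then proves the Hardy-type estimate \eqref{E:sandwich-estimate} --- your small-scale sum is precisely a discrete counterpart of that estimate, with the same mechanism ($\sigma^{\alpha k}$, respectively $2^{\alpha j}$, summing geometrically). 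What each buys: your proof is more elementary and self-contained (no dilation operator, no $\Lambda(Y)$-sandwich, no a.e.\ differentiation of $\vp_Y$, no Hardy--Littlewood inequality), while the paper's route gives along the way an estimate valid for arbitrary nonnegative $h$, not just indicators, and isolates \eqref{E:sandwich-estimate}, which expresses the almost-concavity of the right-hand side of \eqref{E:fundamental-optimal-domain-hardy} and is reusable elsewhere. The borderline bookkeeping you only sketch (the range $s\in(t,2t)$, $t$ close to $L$ when $L=1$, and the dyadic block with $2^j$ comparable to $t$) is routine and works exactly as you indicate, using continuity and quasi-concavity of $\vp_Y$ together with monotonicity of $\vpx$.
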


\begin{proof} We present the proof only for $L=1$. The proof for $L=\infty$ is almost the same, even easier.

The fact that $X$ is an~\ri~space is classical and can be proved in the same way as in~\citep[Theorem~3.3]{Ker:06}.
We will show the inequality ``$\gtrsim$'' in~\eqref{E:fundamental-optimal-domain-hardy}.
Fix $t\in(0,1)$.
By the boundedness of the dilation operator on $Y$ \citep[see \eg][Chapter~3, Proposition~5.11]{Ben:80}, we get
\begin{align*}
	\vpx(t)
		& = \nrm{\chi_{[0,t)}}_{X(0,L)} \ge \nrm*{\int_{\tau^{\beta}}^{1} \chi_{(0,t)}(s)s^{\ai}\dd s}_{Y(0,1)}
			\ge \nrm*{\chi_{(0,\frac{t}{2})}(\tau^\beta) \int_{\tau^\beta}^ts^{\ai}\dd s}_{Y(0,1)}
        \\
    & \ge \int_{{t}/{2}}^ts^{\ai}\dd s\;
						\nrm*{\chi_{(0,\frac t2)}\bigl(\tau^\beta\bigr)}_{Y(0,1)}
    	\approx t^{\alpha}\vpy\bigl(t^{\ib[i]}\bigr),
\end{align*}
showing that
\begin{equation*}
	\frac{\vpx(t)}{t}
		\gtrsim t^{\ai} \vpy\bigl(t^{\ib[i]}\bigr)
	\quad\text{for every $t\in(0,1)$}.
\end{equation*}
Since the function $t\mapsto {\vpx(t)}/{t}$ is nonincreasing, we can enhance the last estimate to
\begin{equation*}
	\frac{\vpx(t)}{t}
		\gtrsim \sup_{s\in(t,\infty)} s^{\alpha-1}\vpy\bigl(s^{\ib[i]}\bigr)
	\quad\text{for every $t\in(0,1)$},
\end{equation*}
which yields the desired inequality.
Note that the supremum is in fact attained on $(t,1)$ as $\vpy$ is, by definition, constant on $[1,\infty)$.

Let us focus on the converse inequality in~\eqref{E:fundamental-optimal-domain-hardy}.
Note that $\vpy$ is differentiable \ae and for any measurable $f$, one has
\begin{equation} \label{E:lambda-norm-eq}
	\nrm{f}_{\Lambda(Y)}
		= \int_{0}^\infty \vpy(f_*)
		= \vpy(0_+)\nrm{f}_\infty + \int_{0}^{1} f^* \vpy'.
\end{equation}
Thus, by fundamental embedding~\eqref{E:Lorentz-Marcinkiewicz-Sandwich} and Fubini's theorem, we have for any $h\ge 0$ that
\begin{align*}
	\nrm*{\int_{\tau^{\beta}}^{1} h(s)s^{\ai}\dd s}_{Y(0,1)}
		& \lesssim \nrm*{\int_{\tau^{\beta}}^{1} h(s)s^{\ai}\dd s}_{\Lambda(Y)(0,1)}
				\\
		& \lesssim \vpy(0_+) \nrm*{\int_{\tau^{\beta}}^{1} h(s)s^{\ai}\dd s}_{L^\infty(0,1)}
				+ \int_{0}^{1} \int_{\tau^{\beta}}^{1} h(s)s^{\alpha-1}\dd s\,\vpy'(\tau)\dd\tau
				\\
		& = \vpy(0_+) \int_{0}^{1} h(s)s^{\ai}\dd s
				+ \int_{0}^{1} \vpy\bigl(s^{\ib[i]}\bigr) h(s)s^{\alpha-1} \dd s
				\\
		& \approx \int_{0}^{1} \vpy\bigl(s^{\ib[i]}\bigr) h(s)s^{\alpha-1} \dd s
			\le \int_{0}^{1} h(s) \sup_{\tau\in(s,\infty)}
						\vpy\bigl(\tau^{\ib[i]}\bigr)\tau^{\alpha-1} \dd s.
\end{align*}
Hence, by the Hardy--Littlewood inequality (see \eg~\citep[Theorem~II.2.2]{BS}), one has, for any $t\in(0,1)$,
\begin{align*}
	\vpx(t)
		& = \sup_{h^*=\chi_{[0,t)}}
					\nrm*{\int_{\tau^{\beta}}^{1} h(s)s^{\ai}\dd s}_{Y(0,1)}
				\\
    & \le \sup_{h^*=\chi_{[0,t)}}
					\int_{0}^{1} h(s) \sup_{\tau\in(s,\infty)}
						\vpy\bigl(\tau^{\ib[i]}\bigr)\tau^{\alpha-1} \dd s
    	\le \int_{0}^{t} \sup_{\tau\in(s,\infty)}
						\vpy\bigl(\tau^{\ib[i]}\bigr)\tau^{\alpha-1} \dd s.
\end{align*}
Therefore, we shall be done once we prove
\begin{equation}\label{E:sandwich-estimate}
	\int_{0}^{t} \frac{\vp(s)}{s}\d s
		\lesssim \vp(t)
	\quad\text{for $t\in(0,1)$},
\end{equation}
in which $\vp$ denotes the function on the right hand side of \eqref{E:fundamental-optimal-domain-hardy}.
Using the equivalent form
\begin{equation} \label{E:vp-alt}
	\vp(t) = \sup_{s\in(0,\infty)} \vpy(s)\min\set[\big ]{t^\alpha, ts^{\beta(\ai)}},
\end{equation}
we infer that $\vp$ is non-decreasing.
It also follows from equality~\eqref{E:vp-alt} that for any $\sigma\in(0,1)$ and any $k\in\N$, one has $\vp(t\sigma^k) \le \sigma^{\alpha k} \vp(t)$ for $t\in(0,1)$ and therefore
\begin{equation*}
	\int_{0}^{t} \frac{\vp(s)}{s}\d s
		= \sum_{k=0}^\infty \int_{t\sigma^{k+1}}^{t\sigma^k} \frac{\vp(s)}{s}\d s
		\le \sum_{k=0}^\infty \vp(t\sigma^k) \int_{t\sigma^{k+1}}^{t\sigma^k} \frac{\d s}{s}
		\le \vp(t) \log\frac1\sigma \sum_{k=0}^\infty \sigma^{\alpha k}
		= \vp(t) \frac{\log\frac1\sigma}{1-\sigma^\alpha}
\end{equation*}
establishing~\eqref{E:sandwich-estimate}, hence the assertion.
\end{proof}

The following consequence of the preceding theorem, tailored for the specific needs of Sobolev embeddings, will later have a decisive role in solving the open problem concerning the existence of optimal Orlicz domains in Sobolev embeddings into non-Marcinkiewicz spaces.

\begin{corollary}\label{C:fundamental-domain-equality}
Let $m,n\in\N$ be such that $n\ge2$ and $m\le n-1$, and let $\Omega\subset \R^n$ be a bounded domain with Lipschitz boundary.
Suppose that $Y_1$, $Y_2$ are \ri~spaces over $\Omega$ on the same fundamental level, \ie $\vp_{Y_1}\approx\vp_{Y_2}$.
Let $X_j$ be the optimal \ri~domain spaces in the embedding
\begin{equation}
	W^m X_j(\Omega) \hra Y_j(\Omega),
\end{equation}
for $j=1,2$.
Then $X_1$ and $X_2$ are also on the same fundamental level, \ie $\vp_{X_1}\approx\vp_{X_2}$.
\end{corollary}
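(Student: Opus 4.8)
The plan is to read the statement off directly from Theorem~\ref{T:fundamental-optimal-domain-hardy}: the optimal \ri~domain space of a Sobolev embedding is an operator-induced space of exactly the type treated there, so that its fundamental function is determined, up to multiplicative constants, by the fundamental function of the target alone. First I would pass to Luxemburg representations. Since $\abs{\Omega}=1$, the representation spaces live over $(0,1)$, and $\vp_{X_j}=\vp_{\widebar X_j}$, $\vp_{Y_j}=\vp_{\widebar Y_j}$. By the description~\eqref{E:optimal-ri-domain} of the optimal \ri~domain, for $j=1,2$ the representation space $\widebar X_j$ carries the norm
\begin{equation*}
	\nrm{f}_{\widebar X_j(0,1)}
		= \sup_{h^*=f^*}\nrm*{\int_t^1 h(s)\,s^{\mn-1}\,\d s}_{\widebar Y_j(0,1)}.
\end{equation*}
This is precisely the situation of Theorem~\ref{T:fundamental-optimal-domain-hardy} with $L=1$, $\beta=1$, $\alpha=\mn$ and $Y=\widebar Y_j$; the hypotheses $\alpha\in(0,1)$ and $\beta\in(0,\infty)$ hold because $1\le m\le n-1$ gives $\mn\in(0,1)$. (It is harmless that the supremum in~\eqref{E:optimal-ri-domain} runs over nonnegative $h$ only, whereas Theorem~\ref{T:fundamental-optimal-domain-hardy} allows every $h$ with $h^*=f^*$: replacing $h$ by $\abs h$ preserves $h^*$ and does not decrease the quantity inside the norm, so the two suprema coincide.)

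Applying Theorem~\ref{T:fundamental-optimal-domain-hardy} now gives, for $j=1,2$,
\begin{equation*}
	\vp_{X_j}(t)
		\approx t\sup_{s\in(t,\infty)}\vp_{Y_j}(s)\,s^{\mn-1}
	\quad\text{for }t\in(0,1).
\end{equation*}
The decisive feature is that the right-hand side depends on $Y_j$ \emph{only through its fundamental function}. Consequently the hypothesis $\vp_{Y_1}\approx\vp_{Y_2}$, say $c^{-1}\vp_{Y_2}(s)\le\vp_{Y_1}(s)\le c\,\vp_{Y_2}(s)$ for all $s$, propagates termwise through the supremum, so that
\begin{equation*}
	t\sup_{s\in(t,\infty)}\vp_{Y_1}(s)\,s^{\mn-1}
		\approx t\sup_{s\in(t,\infty)}\vp_{Y_2}(s)\,s^{\mn-1}
	\quad\text{for }t\in(0,1),
\end{equation*}
with constants $c$ and $c^{-1}$. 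Chaining this with the two preceding equivalences yields $\vp_{X_1}(t)\approx\vp_{X_2}(t)$ on $(0,1)$, that is, $X_1$ and $X_2$ lie on the same fundamental level.

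I do not expect any genuine obstacle, since the substantive work has already been carried out in Theorem~\ref{T:fundamental-optimal-domain-hardy}. Two points deserve a line of care. The first is the bookkeeping identification $\alpha-1=\mn-1$, \ie $\beta=1$, which is exactly what makes the lower integration limit $\tau^\beta$ in Theorem~\ref{T:fundamental-optimal-domain-hardy} coincide with the lower limit $t$ occurring in~\eqref{E:optimal-ri-domain}, so that the exponent $\ib[i]$ there becomes $1$ and $\vp_Y(s^{\ib[i]})$ is simply $\vp_Y(s)$. The second is the role of the implied constants: Theorem~\ref{T:fundamental-optimal-domain-hardy} furnishes, for each fixed target, a \emph{finite} equivalence constant, and since the conclusion sought is itself merely an equivalence $\vp_{X_1}\approx\vp_{X_2}$ with no constant prescribed in advance, the constants attached to $Y_1$ and $Y_2$ are simply absorbed into the final equivalence, so their possible dependence on the individual spaces is immaterial. (Should a uniform constant be desired, inspecting the proof of Theorem~\ref{T:fundamental-optimal-domain-hardy} shows it may be taken to depend only on $\alpha$ and $\beta$, hence only on $m$ and $n$, together with the universal sandwich constants of~\eqref{E:Lorentz-Marcinkiewicz-Sandwich} and the quasi-concavity of fundamental functions.)
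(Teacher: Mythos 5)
Your proposal is correct and is essentially the paper's own proof: the paper likewise deduces the corollary directly from the formula~\eqref{E:optimal-ri-domain} for the optimal \ri~domain norm combined with Theorem~\ref{T:fundamental-optimal-domain-hardy} (with $L=1$, $\beta=1$, $\alpha=\mn$), the point being that the resulting expression for $\vpx$ depends on the target only through its fundamental function. Your extra remarks (nonnegative versus general $h$, passage to Luxemburg representations, constants) are harmless elaborations of the same argument.
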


\begin{proof}
The proof follows immediately by the formula for the norm in the optimal \ri~domain space~\eqref{E:optimal-ri-domain} and by Theorem~\ref{T:fundamental-optimal-domain-hardy}.
\end{proof}

As an application of the principal alternative, we shall now present a new direct and non-constructive proof of a known result concerning the existence of an optimal Orlicz domain space $L^A$ in a~Sobolev embedding
\begin{equation} \label{E:sobolev-to-orlicz-embedding}
	W^m L^A(\Omega)\hra L^B(\Omega),
\end{equation}
where $L^B$ is a~prescribed Orlicz space.
For the original proof, see \citep[Theorem~3.4]{Cia:19}.

\begin{theorem}[optimal Orlicz domain in a Sobolev embedding into an Orlicz space] \label{T:sobolev-to-orlicz}
Let $m,n\in\N$ be such that $n\ge2$ and $m\le n-1$, and let $\Omega\subset\R^n$ be a bounded set with Lipschitz boundary.
Suppose that $B$ is a~Young function. Let $B_{n}$ be a Young function satisfying
\begin{equation*}
    B_n^{-1}(t) \approx t\inf_{s\in(1,t)}B^{-1}(s)s^{\mn-1} \quad\text{near infinity.}
\end{equation*}
The following four statements are equivalent.
\begin{enumerate}
	\item\label{en:sobolev-to-orlicz-embedding}
	The embedding
	$W^mL^{B_n}(\Omega)\hra L^B(\Omega)$ holds;
	\item\label{en:sobolev-to-orlicz-optimal-space}
	The space $L^{A}(\Omega)=L^{B_n}(\Omega)$ is the largest Orlicz space in~\eqref{E:sobolev-to-orlicz-embedding};
	\item\label{en:sobolev-to-orlicz-existence-of-optimal-space}
	There exists a largest Orlicz space $L^{A}(\Omega)$ in~\eqref{E:sobolev-to-orlicz-embedding};
	\item\label{en:sobolev-to-orlicz-index}
	The upper Boyd index\footnote{For the definition, see~\eg \citep[Section~2.2]{Cia:19}. For more details on indices of Orlicz spaces, \cf~\citep{Boy:71}.} $I_{B_n}$ of $B_{n}$ satisfies
	\begin{equation} \label{E:sobolev-index}
			I_{B_n}<\nm.
	\end{equation}
\end{enumerate}
\end{theorem}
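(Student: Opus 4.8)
The plan is to prove the cyclic chain of implications
\ref{en:sobolev-to-orlicz-existence-of-optimal-space}$\Rightarrow$\ref{en:sobolev-to-orlicz-index}$\Rightarrow$\ref{en:sobolev-to-orlicz-embedding}$\Rightarrow$\ref{en:sobolev-to-orlicz-optimal-space}$\Rightarrow$\ref{en:sobolev-to-orlicz-existence-of-optimal-space}, with the understanding that the last implication is trivial and the implication \ref{en:sobolev-to-orlicz-optimal-space}$\Rightarrow$\ref{en:sobolev-to-orlicz-existence-of-optimal-space} is immediate as well. The backbone is Theorem~\ref{T:principal-alternative-sobolev-embeddings}\ref{en:PA-sobolev-embeddings-domain} together with Corollary~\ref{C:fundamental-domain-equality}: since $L^B$ is a Young function (hence an \ri~space), let $X$ denote the optimal \ri~domain space in $W^mX(\Omega)\hra L^B(\Omega)$, which exists and is given by~\eqref{E:optimal-ri-domain}. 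By the principal alternative, a largest Orlicz domain space exists if and only if $L(X)\subset X$, and in that case it equals $L(X)$. So the whole theorem reduces to two computations: first, identifying the fundamental function $\vp_X$ and showing $L(X)=L^{B_n}$; second, translating the inclusion $L(X)\subset X$ into the Boyd-index condition~\eqref{E:sobolev-index}.

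First I would compute $\vp_X$. Applying Theorem~\ref{T:fundamental-optimal-domain-hardy} (with the parameters matched to the Sobolev situation: the inner Hardy kernel in~\eqref{E:optimal-ri-domain} is $s^{m/n-1}$, which after the standard substitution corresponds to $\alpha=m/n$, $\beta=1$, and $L=1$ since $|\Omega|=1$), together with the fact that the target space here is $L^B$ whose fundamental function is $\vp_{L^B}=B^{-1}_\#$, I obtain
\begin{equation*}
	\vp_X(t)\approx t\sup_{s\in(t,1)}B^{-1}_\#(s)\,s^{m/n-1}
	\quad\text{for $t\in(0,1)$.}
\end{equation*}
Passing to the correlative function and inverting, this is precisely equivalent to $\vp_X = (B_n)^{-1}_\#$ with $B_n$ as defined in the statement (the infimum $\inf_{s\in(1,t)}B^{-1}(s)s^{m/n-1}$ near infinity is the correlative transform of the supremum above). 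Consequently $L(X)=L^{B_n}(\Omega)$, which is the content underlying the equality claimed in~\ref{en:sobolev-to-orlicz-optimal-space}. (Here I would lean on Corollary~\ref{C:fundamental-domain-equality} to assert that this fundamental function depends only on the fundamental level of $L^B$, so no ambiguity arises from replacing $B$ by an equivalent Young function.)

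Next I would handle the equivalence of the three remaining conditions. By the principal alternative, $L^{B_n}=L(X)$ is the largest Orlicz domain exactly when $L(X)\subset X$, i.e.\ when $W^mL^{B_n}(\Omega)\hra L^B(\Omega)$ holds (using $W^mL^{B_n}\hra L^B\iff L^{B_n}\hra X$, which is the reduction already invoked in the proof of Theorem~\ref{T:principal-alternative-sobolev-embeddings}); this is precisely \ref{en:sobolev-to-orlicz-embedding}$\Leftrightarrow$\ref{en:sobolev-to-orlicz-optimal-space}, and \ref{en:sobolev-to-orlicz-optimal-space}$\Rightarrow$\ref{en:sobolev-to-orlicz-existence-of-optimal-space} is trivial. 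It remains to close the loop via the Boyd index. For \ref{en:sobolev-to-orlicz-index}$\Rightarrow$\ref{en:sobolev-to-orlicz-embedding}: when $I_{B_n}<n/m$, the Orlicz--Sobolev embedding theorem of Cianchi (cited in the excerpt, \citep{Cia:96,Cia:06}) guarantees that the optimal Orlicz target for $W^mL^{B_n}$ is again an Orlicz space whose Young function is equivalent to $B$ near infinity, whence \ref{en:sobolev-to-orlicz-embedding} holds. For the converse \ref{en:sobolev-to-orlicz-existence-of-optimal-space}$\Rightarrow$\ref{en:sobolev-to-orlicz-index}: if $I_{B_n}\ge n/m$, then $L^{B_n}$ sits at or beyond the critical growth for $W^m\cdot\hra L^\infty$, so the optimal \ri~domain $X$ for the target $L^B$ must contain a genuinely ``Lorentz-type'' component strictly larger than any Orlicz space on its level — concretely $L(X)\not\subset X$ because $X$ fails the $\Delta_2$-type lower-index condition that $L(X)\subset X$ would force — and then Theorem~\ref{T:principal-alternative-sobolev-embeddings}\ref{en:PA-sobolev-embeddings-domain} denies the existence of any largest Orlicz domain, contradicting \ref{en:sobolev-to-orlicz-existence-of-optimal-space}. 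The main obstacle is this last step: turning the abstract condition $L(X)\not\subset X$ into the clean index inequality $I_{B_n}\ge n/m$ requires a careful analysis of the Boyd indices of $X$ versus $L(X)=L^{B_n}$ and of how the Hardy operator $f\mapsto\int_{t}^1 f(s)s^{m/n-1}\,\d s$ acts near the endpoint, essentially reproving in index language the sharpness half of Cianchi's theorem; everything else is bookkeeping with the principal alternative and Theorem~\ref{T:fundamental-optimal-domain-hardy}.
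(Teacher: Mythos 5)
Your first half coincides with the paper's argument: the identification $L(X)=L^{B_n}$ via formula~\eqref{E:optimal-ri-domain} and Theorem~\ref{T:fundamental-optimal-domain-hardy}, and the equivalence of statements \ref{en:sobolev-to-orlicz-embedding}, \ref{en:sobolev-to-orlicz-optimal-space}, \ref{en:sobolev-to-orlicz-existence-of-optimal-space} through the principal alternative (Theorem~\ref{T:principal-alternative-sobolev-embeddings}) are exactly what the paper does. The genuine gap is everything touching statement~\ref{en:sobolev-to-orlicz-index}. Your implication \ref{en:sobolev-to-orlicz-index}$\Rightarrow$\ref{en:sobolev-to-orlicz-embedding} rests on the assertion that, under $I_{B_n}<\nm$, Cianchi's theorem produces an optimal Orlicz target for $W^mL^{B_n}$ whose Young function is equivalent to $B$ near infinity. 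Cianchi's theorem does not give you this: it produces an explicit target Young function $F$ built from $\widetilde{B_n}$, and verifying the relation between $F$ and $B$ (only $B\prec F$ near infinity is needed, and ``equivalent to $B$'' is stronger than needed and need not hold) is precisely the computation to be done, not a quotable fact. Your converse implication \ref{en:sobolev-to-orlicz-existence-of-optimal-space}$\Rightarrow$\ref{en:sobolev-to-orlicz-index} is, by your own admission, not carried out: the passage from $I_{B_n}\ge\nm$ to $L(X)\not\subset X$ is the actual content, and the heuristic about critical growth for $W^m\cdot\hra L^\infty$ and a ``$\Delta_2$-type lower-index condition'' is not an argument. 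So the cycle you propose is broken at both edges adjacent to~\ref{en:sobolev-to-orlicz-index}.

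The paper closes this loop differently and more economically: it never attempts to read the index condition off the relation between $X$ and $L(X)$. Instead it proves \ref{en:sobolev-to-orlicz-embedding}$\Leftrightarrow$\ref{en:sobolev-to-orlicz-index} directly, quoting the known characterization of the Orlicz--Sobolev embedding $W^mL^{B_n}\hra L^B$ from \citep[Theorem~3.1]{Cia:06} combined with \citep[Lemma~2]{Cia:00}, namely $B\prec F$ near infinity with $F$ expressed through $E(t)=\int_0^t\widetilde{B_n}(s)\,s^{-n/(n-m)-1}\,\d s$, and then showing by elementary manipulations (using $F^{-1}(t)I^{-1}(t)\approx t^{1-\mn}$ with $I(t)=t^{\frac{n}{n-m}}E(t)$, the monotonicity of $1/I^{-1}$, the definition of $B_n$, and relation~\eqref{E:upper-bound-for-product}) that $B\prec F$ is equivalent to $I\prec\widetilde{B_n}$, which in turn is characterized by $I_{B_n}<\nm$ via \citep[Proposition~4.1]{Cia:19}. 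To complete your proposal you would have to reproduce this chain (or supply an independent proof of both directions involving the index); as written, both of those directions are missing.
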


\begin{proof}
The fundamental function $\vpx$ of the largest \ri~space $X$ in the embedding
\begin{equation}\label{E:embedding-from-X}
    W^mX(\Omega)\hra L^B(\Omega)
\end{equation}
satisfies
\begin{equation*}
	\vpx(t)
		\approx t \sup_{s\in(t,\infty)} \vp_{L^B}(s) s^{\mn-1}
	\quad\text{for $t\in(0,1)$},
\end{equation*}
as follows by formula~\eqref{E:optimal-ri-domain} and Theorem~\ref{T:fundamental-optimal-domain-hardy}.
Using relations between fundamental and Young functions~\eqref{E:fundamental-ri}, we arrive at $L(X)=L^{B_n}$.
Now we can obtain the equivalence of the first three statements easily from the principal alternative for Sobolev embeddings (part~\ref{en:PA-sobolev-embeddings-domain} of Theorem~\ref{T:principal-alternative-sobolev-embeddings}).
Indeed, we either have $L^{B_n}\subset X$, or $L^{B_n}\not\subset X$.
In the former case, statement~\ref{en:sobolev-to-orlicz-embedding} holds owing to this very inclusion and embedding~\eqref{E:embedding-from-X}, statement~\ref{en:sobolev-to-orlicz-optimal-space} follows from Theorem~\ref{T:principal-alternative-sobolev-embeddings}, and statement~\ref{en:sobolev-to-orlicz-existence-of-optimal-space} is an immediate consequence of~\ref{en:sobolev-to-orlicz-optimal-space}.
In the latter case, statement~\ref{en:sobolev-to-orlicz-embedding} fails by the optimality of $X$ in~\eqref{E:embedding-from-X}, statement~\ref{en:sobolev-to-orlicz-existence-of-optimal-space} fails owing to Theorem~\ref{T:principal-alternative-sobolev-embeddings}, and statement~\ref{en:sobolev-to-orlicz-optimal-space} fails since \ref{en:sobolev-to-orlicz-existence-of-optimal-space} does.
So the first three statements either stand or fall together.

In the rest of the proof, we show that statements~\ref{en:sobolev-to-orlicz-embedding} and~\ref{en:sobolev-to-orlicz-index} are equivalent.
Owing to the combination of~\cite[Theorem~3.1]{Cia:06} and~\cite[Lemma~2]{Cia:00}, we know that~\ref{en:sobolev-to-orlicz-embedding} is equivalent to
\begin{equation} \label{E:b-f}
	B\prec F
		\quad\text{near infinity},
\end{equation}
in which the function $F$ is given by
\begin{equation}
	F(t)= \left(tE^{-1}\left(t^{\frac{n}{n-m}}\right)\right)^{\frac{n}{n-m}}
		\quad\text{and}\quad
	E(t) = \int_{0}^{t} \frac{\widetilde{B_n}(s)}{s^{n/(n-m)+1}}\dd s
		\quad\text{near infinity}.
\end{equation}
It can be verified \citep[\cf~\eg][Eq.~2.19]{Cia:00} that
$F^{-1}(t)I^{-1}(t)\approx t^{1-\mn}$ near infinity,
where $I(t)=t^{\frac{n}{n-m}}E(t)$ for large $t$.
Therefore, condition~\eqref{E:b-f} is equivalent to
\begin{equation} \label{E:i-b}
	\frac{1}{I^{-1}(s)}
		\lesssim B^{-1}(s)s^{\mn-1}
	\quad\text{near infinity}.
\end{equation}
Now, since $I$ is increasing and $1/I^{-1}$ is decreasing, inequality~\eqref{E:i-b} is in fact equivalent to
\begin{equation*}
	\frac{1}{I^{-1}(t)}
		\lesssim \inf_{s\in(1,t)} B^{-1}(s)s^{\mn-1}
	\quad\text{near infinity}.
\end{equation*}
Consequently, by the definition of $B_n$ and due to the relation between inverses of a Young function and its conjugate~\eqref{E:upper-bound-for-product}, we have $\widetilde{B_n}^{-1}\lesssim I^{-1}$ near infinity which, passing to inverses, is equivalent to $I\prec\widetilde{B_n}$.
Finally, this inequality is characterized by the Boyd index condition~\eqref{E:sobolev-index}, see~\cite[Proposition~4.1]{Cia:19}, for instance.
\end{proof}

In Corollary \ref{C:fundamental-domain-equality} we have seen that if two \ri~spaces share the same fundamental function, then so do their optimal \ri~domain partners in a Sobolev embedding.
This fact is just a simple consequence of the explicit expression of $\vpx$ in terms of $\vpy$ given by formula~\eqref{E:fundamental-optimal-domain-hardy}.

Our next aim is to show that an analogous result, in which domain and target spaces swap roles, does \emph{not} hold.
This is in some sense natural since no explicit formula yielding $\vpy$ in terms of $\vpx$ holds.
However, a certain analogue of Corollary~\ref{C:fundamental-domain-equality} is still recoverable if a specific restriction on the (joint) fundamental function of the given (domain) spaces holds, as the next result (or more precisely its corollary) shows.
It may fail in general, though, as will be illustrated in Remark~\ref{R:fundamental}.

\begin{theorem} \label{T:fundamental-target-inequality}
Let $\alpha\in(0,1)$ and $\beta\in(0,\infty)$.
Suppose that $X(0,1)$ is an \ri space such that
\begin{equation} \label{E:fund-cond-target}
	\int_{t}^1 \frac{s^{\ai}}{\vpx(s)} \dd s
		\lesssim  \frac{t^{\alpha}}{\vpx(t)}
	\quad\text{for $t\in(0,1)$.}
\end{equation}
If $Y(0,1)$ is the \ri~space whose associate space $Y'(0,1)$, satisfies
\begin{equation} \label{E:optimal-range-general}
  \nrm{g}_{Y'(0,1)}
		= \nrm*{\tau^{\ai+\ib[i]} g^{**}\bigl(\tau^{\ib[i]}\bigr)}_{X'(0,1)}
	\quad\text{for $g\in\MM(0,1)$},
\end{equation}
then its fundamental function $\vpy$, obeys
\begin{equation} \label{E:fund-target}
  \vpy(t)\approx t^{-\alpha\beta}\vpx\bigl(\tb\bigr)
	\quad\text{for $t\in(0,1)$.}
\end{equation}
\end{theorem}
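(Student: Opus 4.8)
The plan is to pass to associate spaces: by the identity $\vpy(t)\varphi_{Y'}(t)=t$ from~\eqref{E:fundamental-associate}, together with $\varphi_{X'}(\tb)=\tb/\vpx(\tb)$, the claim~\eqref{E:fund-target} is equivalent to
\[
	\varphi_{Y'}(t)\approx t^{1+\beta(\ai)}\varphi_{X'}(\tb)
		\quad\text{for $t\in(0,1)$.}
\]
To compute $\varphi_{Y'}(t)=\nrm{\chi_{(0,t)}}_{Y'(0,1)}$ I would substitute $g=\chi_{(0,t)}$ into~\eqref{E:optimal-range-general}. Since $g^{**}(s)=\min\set{1,t/s}$, this yields $\varphi_{Y'}(t)=\nrm{\psi_t}_{X'(0,1)}$, where
\[
	\psi_t(\tau)=\tau^{\ai+\ib[i]}\chi_{(0,\tb)}(\tau)+t\,\tau^{\ai}\chi_{(\tb,1)}(\tau),
\]
a function that is continuous at $\tb$ with $\psi_t(\tb)=t^{1+\beta(\ai)}$. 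Thus everything reduces to the two-sided estimate $\nrm{\psi_t}_{X'(0,1)}\approx t^{1+\beta(\ai)}\varphi_{X'}(\tb)$.

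\textbf{Lower bound.} This direction uses neither~\eqref{E:fund-cond-target} nor the quasiconcavity of $\varphi_{X'}$. For $t$ with $2\tb<1$ one has $\psi_t(\tau)=t\,\tau^{\ai}\ge 2^{\ai}\,t^{1+\beta(\ai)}$ on the interval $(\tb,2\tb)$, which has measure $\tb$, so that
\[
	\nrm{\psi_t}_{X'}\ge 2^{\ai}\,t^{1+\beta(\ai)}\,\nrm{\chi_{(\tb,2\tb)}}_{X'}=2^{\ai}\,t^{1+\beta(\ai)}\,\varphi_{X'}(\tb);
\]
for $t$ bounded away from $0$ the inequality is trivial, as all quantities are comparable to their values at $t=1$. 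Feeding this into $\vpy(t)=t/\varphi_{Y'}(t)$ already yields $\vpy(t)\lesssim t^{-\alpha\beta}\vpx(\tb)$.

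\textbf{Upper bound.} Split $\psi_t=\psi_t\chi_{(0,\tb)}+\psi_t\chi_{(\tb,1)}$ and bound the two $X'$-norms separately. When $\ai+\ib[i]\ge 0$ (the case occurring in the Sobolev applications) one has $\psi_t\chi_{(0,\tb)}\le t^{1+\beta(\ai)}\chi_{(0,\tb)}$ pointwise, hence $\nrm{\psi_t\chi_{(0,\tb)}}_{X'}\le t^{1+\beta(\ai)}\varphi_{X'}(\tb)$; when $\ai+\ib[i]<0$ the head piece blows up at the origin and one estimates its $X'$-norm via the sandwich $\Lambda(X')\hra X'$ and the representation~\eqref{E:lambda-norm-eq} of the Lorentz-endpoint norm through the fundamental function, obtaining the same bound up to constants by an integration by parts near $0$ and the quasiconcavity of $\varphi_{X'}$ — here one uses $\varphi_{X'}(0_+)=0$, which is forced by the standing hypothesis that $Y$, hence $Y'$, is a genuine \ri~space. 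For the tail $\psi_t\chi_{(\tb,1)}=t\,\tau^{\ai}\chi_{(\tb,1)}$ I would decompose dyadically: on each annulus $(2^k\tb,2^{k+1}\tb)$ intersected with $(\tb,1)$ bound $\tau^{\ai}$ by its value at the left endpoint, apply the triangle inequality together with $\nrm{\chi_E}_{X'}=\varphi_{X'}(\abs{E})$, and rewrite $(2^k\tb)^{\ai}\varphi_{X'}(2^k\tb)=(2^k\tb)^{\alpha}/\vpx(2^k\tb)$; the resulting finite sum is dominated, term by term up to the doubling constant of $\vpx$, by $\int_{\tb}^1 s^{\ai}/\vpx(s)\dd s$, which by hypothesis~\eqref{E:fund-cond-target} applied at the point $\tb$ is $\lesssim (\tb)^{\alpha}/\vpx(\tb)=(\tb)^{\ai}\varphi_{X'}(\tb)$. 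Multiplying by $t$ gives $\nrm{\psi_t\chi_{(\tb,1)}}_{X'}\lesssim t^{1+\beta(\ai)}\varphi_{X'}(\tb)$, and combining the two pieces finishes the upper bound, hence the theorem.

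\textbf{Main obstacle.} The tail estimate is the only step where~\eqref{E:fund-cond-target} is used, and it is the crux: one must convert the $X'$-norm of the decaying power $t\,\tau^{\ai}\chi_{(\tb,1)}$ into precisely the integral $\int_{\tb}^1 s^{\ai}/\vpx(s)\dd s$ that the hypothesis controls, being careful that the dyadic truncation at the top (near $1$) only costs a doubling constant. The whole scheme runs parallel to the proof of Theorem~\ref{T:fundamental-optimal-domain-hardy}, with the roles of domain and target interchanged; the one genuinely new point, relative to that proof, is the need to rule out — through the assumed validity of $Y$ — the degenerate behaviour of the head piece when $\ai+\ib[i]<0$.
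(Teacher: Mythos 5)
Your proposal follows the same route as the paper's proof: test \eqref{E:optimal-range-general} on characteristic functions, translate \eqref{E:fund-target} via \eqref{E:fundamental-associate} into the equivalent statement $\varphi_{Y'}(t)\approx t^{1+\beta(\ai)}\varphi_{X'}(\tb)$, obtain the lower bound for $\varphi_{Y'}$ from the behaviour just above $\tb$, and obtain the upper bound by splitting at $\tb$ and feeding the tail into hypothesis \eqref{E:fund-cond-target}. Where the paper runs both norm estimates through the endpoint sandwich \eqref{E:Lorentz-Marcinkiewicz-Sandwich} (lower bound via $M(X')$ and $h_t^{**}$, upper bound via $\Lambda(X')$, \eqref{E:lambda-norm-eq} and $\varphi_{X'}'$), you evaluate directly on $(\tb,2\tb)$ and decompose the tail dyadically using $\nrm{\chi_E}_{X'}=\varphi_{X'}(\abs{E})$; since $\vpx(2s)\le2\vpx(s)$ is automatic for a fundamental function, your term-by-term comparison with $\int_{\tb}^1 s^{\ai}/\vpx(s)\dd s$ is correct. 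In the case $\ai+\ib[i]\ge0$ your argument is complete and is an elementary, equivalent variant of the paper's.

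The case $\ai+\ib[i]<0$ is a genuine gap, not the routine verification you suggest. The ingredients you invoke for the head piece $\tau^{\ai+\ib[i]}\chi_{(0,\tb)}$ --- the embedding $\Lambda(X')\hra X'$, representation \eqref{E:lambda-norm-eq}, integration by parts, quasiconcavity, and $\varphi_{X'}(0_+)=0$ --- cannot yield the bound $\lesssim t^{1+\beta(\ai)}\varphi_{X'}(\tb)$: quasiconcavity together with $\varphi_{X'}(0_+)=0$ gives no rate of decay of $\varphi_{X'}$ at zero, and \eqref{E:fund-cond-target} only bounds $\varphi_{X'}$ from below (it is equivalent to $\vpx(\sigma t)\le c\sigma^{\alpha}\vpx(t)$, cf.\ Remark~\ref{R:index-condition}), so neither $\int_0^{\tb}s^{\ai+\ib[i]}\varphi_{X'}'(s)\dd s$ nor the corresponding dyadic sum need be of the claimed size. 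Concretely, for $\alpha=\tfrac12$, $\beta=4$ and $X=L^{8/7}(0,1)$, condition \eqref{E:fund-cond-target} holds and $\varphi_{X'}(0_+)=0$, yet $\tau^{-1/4}\chi_{(0,\tb)}\notin X'=L^{8}$, so every quantity in your head estimate is infinite. The only rescue is the hypothesis that $Y'$ is a genuine \ri~norm, but from that you get finiteness of the head, not the required order; it is not clear that \eqref{E:fund-cond-target} plus finiteness suffices, since the ratio $\varphi_{X'}(s)/s^{\,1-\alpha-1/\beta}$ may still oscillate. The paper sidesteps this by quoting the standard computation \eqref{E:fund-estimate-target}, which simply discards the head; that computation is routine exactly when the weight $\tau^{\ai+\ib[i]}$ is bounded near zero, \ie when $\beta(1-\alpha)\le1$, which is the situation in all of the paper's applications ($\beta=1$, $\alpha=m/n$ in Corollary~\ref{C:fundamental-target-equality}). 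Either restrict your write-up to that regime, where your proof is complete, or supply a genuinely new argument for the head piece.
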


\begin{proof}
Setting $g=\chi_{[0,t)}$ in~\eqref{E:optimal-range-general} and by a~standard calculation (see~\eg~\cite[Eq.~5.11]{Cia:19} or \cite[Proposition~9.1]{Pic:98}), we get
\begin{equation} \label{E:fund-estimate-target}
  \varphi_{Y'}(t)
		\approx t \nrm[\big ]{\tau^{\ai}\chi_{(\tb,1)}(\tau)}_{X'(0,1)}
	\quad\text{for $t$ near zero.}
\end{equation}
Let us denote by $h_t(\tau) = \tau^{\ai}\chi_{(\tb,1)}(\tau)$ for $\tau,t\in(0,1)$.
Then we have
\begin{equation}\label{E:def-gt}
	h_t^*(s) = \brk[\big ]{s+\tb}^{\ai}\chi_{[0,1-\tb)}(s)
		\quad\text{and}\quad
	h_t^{**}(s) \approx s^{-1}\brk[\big ]{(s+\tb)^{\alpha}-t^{\alpha\beta}}
		\quad\text{for $s\in(0,1)$}.
\end{equation}
Using estimate~\eqref{E:fund-estimate-target} and fundamental embedding~\eqref{E:Lorentz-Marcinkiewicz-Sandwich}, we get
\begin{equation*}
	\varphi_{Y'}(t)
		\gtrsim t \nrm[\big ]{s^{\ai}\chi_{(\tb,1)}(s)}_{M(X')}
    = t \sup_{s\in (0,1)} h_t^{**}(s) \varphi_{X'}(s)
    = t \sup_{s\in (0,1)} s^{-1}\left((s+\tb)^{\alpha}-t^{\alpha\beta}\right) \varphi_{X'}(s).
\end{equation*}
Evaluating the supremum at $s=\tb$, we get $\varphi_{Y'}(t)\gtrsim t^{(\ai)\beta+1}\vpxp\brk{\tb}$ which, employing formula for fundamental function of an associate space~\eqref{E:fundamental-associate}, yields the inequality ``$\lesssim$'' in~\eqref{E:fund-target}.

Conversely, condition~\eqref{E:fund-cond-target} implies that
\begin{equation} \label{E:fund-cond-target-2}
	\int_{\tau}^{1} s^{\ai} \varphi'_{X'}(s)\dd s
		\lesssim \int_{\tau}^{1} \frac{s^{\ai}}{\vpx(s)}\d s
		\lesssim \frac{\tau^{\alpha}}{\vpx(\tau)}
	\quad\text{for $\tau\in(0,1)$}.
\end{equation}
Next, relation~\eqref{E:fund-estimate-target} with embedding to the fundamental Lorentz space~\eqref{E:Lorentz-Marcinkiewicz-Sandwich} and its norm expressed as in~\eqref{E:lambda-norm-eq} give
\begin{align}
	\begin{split}
	\frac{\varphi_{Y'}(t)}{t}
		\lesssim \nrm{h_t}_{\Lambda(X')}
		&	= \varphi_{X'}(0_+)\nrm{h_t}_\infty + \int_0^1 h_t^* \varphi'_{X'}
			\\
    & \le \varphi_{X'}(0_+)t^{\beta(\ai)} + \int_0^{1} \brk[\big ]{s+\tb}^{\ai} \varphi'_{X'}(s)\dd s
	\quad\text{for $t\in(0,1)$}.
	\end{split}
\end{align}
Next, we split the interval of integration at $\tb$ to get
\begin{equation}
	\int_0^{1} \brk[\big ]{s+\tb}^{\ai} \varphi'_{X'}(s)\dd s
		\approx t^{\beta(\ai)} \int_{0}^{\tb} \varphi'_{X'}(s)\dd s
			+ \int_{\tb}^{1} s^{\ai} \varphi'_{X'}(s)\dd s,
\end{equation}
which by the use of~\eqref{E:fundamental-associate} and estimate~\eqref{E:fund-cond-target-2} to $\tau=t^{\beta}$ gives
\begin{align*}
	\frac{1}{\vpy(t)}
		= \frac{\varphi_{Y'}(t)}{t}
    \lesssim t^{\beta(\ai)} \varphi_{X'}(t^\beta) + \int_{\tb}^{1} s^{\ai} \varphi'_{X'}(s)\dd s
		\lesssim \frac{t^{\alpha\beta}}{\vp_{X}(\tb)}
		\quad\text{for $t\in(0,1)$.}
\end{align*}
This finally yields the inequality ``$\gtrsim$'' in~\eqref{E:fund-target}.
\end{proof}

\begin{corollary}\label{C:fundamental-target-equality}
Let $m,n\in\N$ be such that $n\ge2$ and $m\le n-1$, and let $\Omega\subset \R^n$ be a bounded domain with Lipschitz boundary.
Suppose that $X_1$, $X_2$ are \ri~spaces over $\Omega$ on the same fundamental level, \ie $\vp_{X_1}\approx\vp_{X_2}$. Assume that
\begin{equation} \label{E:fund-cond-target-sobolev}
	\int_{t}^1 \frac{s^{\mn-1}}{\vpx(s)} \dd s
		\lesssim  \frac{t^{\mn}}{\vpx(t)}
	\quad\text{for $t\in(0,1)$.}
\end{equation}
Let $Y_j$ be the optimal \ri~domain spaces in the embedding
\begin{equation}
	W^m X_j(\Omega) \hra Y_j(\Omega),
\end{equation}
for $j=1,2$. Then $Y_1$ and $Y_2$ are also on the same fundamental level, \ie $\vp_{Y_1}\approx\vp_{Y_2}$.
\end{corollary}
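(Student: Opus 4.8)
The plan is to read off the statement from Theorem~\ref{T:fundamental-target-inequality}, in exactly the way Corollary~\ref{C:fundamental-domain-equality} was obtained from Theorem~\ref{T:fundamental-optimal-domain-hardy}. Under the standing normalisation $\abs{\Omega}=1$, and since $1\le m\le n-1$ forces $\mn\in(0,1)$, the correct choice of parameters will be $\alpha=\mn$ and $\beta=1$. For this choice, condition~\eqref{E:fund-cond-target} of Theorem~\ref{T:fundamental-target-inequality} is literally the hypothesis~\eqref{E:fund-cond-target-sobolev}, the defining relation~\eqref{E:optimal-range-general} of the associated target space collapses to $\nrm{g}_{Y'(0,1)}=\nrm*{\tau^{\mn}g^{**}(\tau)}_{X'(0,1)}$, and the conclusion~\eqref{E:fund-target} becomes $\vpy(t)\approx t^{-\mn}\vpx(t)$.

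First I would transfer everything to the interval $(0,1)$ via the Luxemburg representation: for $j\in\set{1,2}$ let $\widebar{X_j}$ be the representation space of $X_j(\Omega)$ over $(0,1)$, so that $\varphi_{\widebar{X_j}}=\varphi_{X_j}$. The optimal \ri~target space $Y_j$ in $W^mX_j(\Omega)\hra Y_j(\Omega)$ exists, and by~\eqref{E:optimal-ri-range} the Luxemburg representation of its associate space $Y_j'$ has norm $g\mapsto\nrm*{\tau^{\mn}g^{**}(\tau)}_{\widebar{X_j}'(0,1)}$. Comparing this with~\eqref{E:optimal-range-general} specialised to $\alpha=\mn$, $\beta=1$, and using that both the Luxemburg representation and the fundamental function commute with passing to the associate space, I would identify the representation of $Y_j$ with the space ``$Y$'' that Theorem~\ref{T:fundamental-target-inequality} assigns to $X(0,1)=\widebar{X_j}(0,1)$. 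Since $\varphi_{X_1}\approx\varphi_{X_2}$, and since each side of~\eqref{E:fund-cond-target-sobolev} scales the same way when $\varphi_X$ is replaced by an equivalent function, the hypothesis of Theorem~\ref{T:fundamental-target-inequality} holds for $j=1$ and $j=2$ alike. Applying the theorem and carrying~\eqref{E:fund-target} back to $\Omega$ then gives
\[
	\varphi_{Y_j}(t)\approx t^{-\mn}\,\varphi_{X_j}(t)\qquad\text{for }t\in(0,1),\ j=1,2,
\]
so that $\varphi_{X_1}\approx\varphi_{X_2}$ forces $\varphi_{Y_1}\approx\varphi_{Y_2}$ on $(0,1)$; since fundamental functions are constant on $[1,\infty)$, the equivalence holds throughout, which is the claim.

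The only step that is not purely formal is the identification carried out in the second paragraph: matching the concrete optimal-target formula~\eqref{E:optimal-ri-range} with the abstract recipe~\eqref{E:optimal-range-general} (which amounts to pinning down $\alpha=\mn$ and $\beta=1$) and checking the compatibility of the Luxemburg representation with associate spaces and fundamental functions, together with the trivial observation that~\eqref{E:fund-cond-target-sobolev} is insensitive to replacing the fundamental function by an equivalent one. I expect these to be the only points needing genuine verification; no ad hoc construction and no limiting argument are needed here, because Theorem~\ref{T:fundamental-target-inequality} already carries all of the analytic weight.
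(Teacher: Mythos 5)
Your proof is correct and follows essentially the same route as the paper's own (one-line) proof, which likewise just combines the optimal-target formula~\eqref{E:optimal-ri-range} with Theorem~\ref{T:fundamental-target-inequality}. You merely make explicit what the paper leaves implicit: the choice $\alpha=\frac{m}{n}$, $\beta=1$, the Luxemburg-representation bookkeeping, and the observation that condition~\eqref{E:fund-cond-target-sobolev} is stable under replacing $\vpx$ by an equivalent function, all of which check out.
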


\begin{proof}
    The proof follows immediately by the formula for the norm in the optimal \ri~target space in~\eqref{E:optimal-ri-range} and by Theorem~\ref{T:fundamental-target-inequality}.
\end{proof}

\begin{remark} \label{R:index-condition}
Note that condition~\eqref{E:fund-cond-target} is in fact equivalent to an explicit growth condition.
Namely, an increasing function $\vp$ obeys condition~\eqref{E:fund-cond-target} if and only if there exist constants $\sigma,c\in(0,1)$ such that
\begin{equation*}
	\varphi(\sigma t) \le c\sigma^{\alpha}\varphi(t)
	\quad\text{for $t\in(0,1)$}.
\end{equation*}
This condition is further closely related to the so-called fundamental indices of \ri~spaces, see~\eg~\cite[Chapter~3, Exercise~14]{BS}.
\end{remark}

\begin{remark}\label{R:fundamental}
The assertion of Corollary~\ref{C:fundamental-target-equality}, as we alluded to before, is not necessarily true if condition~\eqref{E:fund-cond-target-sobolev} is violated.
Indeed, consider the family of Lorentz spaces $L^{\nm,\,q}(\Omega)$ for $q\in[1,\infty]$, determined by the functionals
\begin{equation*}
	\nrm{f}_{\nm,\,q}
		= \nrm*{t^{\mn-\iq} f^{*}(t)}_{L^q(0,1)}.
\end{equation*}
Then, for every such $q$, all these spaces share the same fundamental function, more precisely,
\begin{equation*}
	\vp_{L^{\nm,q}}(t) \approx t^{\frac{m}{n}}
		\quad\text{for $t\in(0,1)$.}
\end{equation*}
Observe that this function violates condition~\eqref{E:fund-cond-target-sobolev}. Now, let $Y_q$ denote the smallest \ri~space in the Sobolev embedding
\begin{equation}\label{E:yq-embedding}
    W^{m}L^{\nm,q}(\Omega) \hra Y_q(\Omega).
\end{equation}
Then, by formula~\eqref{E:optimal-ri-range} for the optimal space, relation~\eqref{E:fund-estimate-target}, the known explicit form of the associate space of $L^{\nm,q}$, and standard calculations, we get for all $q\in[1,\infty]$ that
\begin{equation*}
	\vp_{Y'_{q}}(t)
		\approx t\nrm[\big ]{s^{\mn-1}\chi_{(t,1)}(s)}_{\brk[\big ]{L^{\nm,\,q}}'}
		\approx t \left(1-\log t\right)^{1-\frac{1}{q}}
		\quad\text{near zero},
\end{equation*}
whence $\vp_{Y_{q}}(t)\approx (1-\log t)^{1/q-1}$ near zero.
Therefore, although all the spaces $L^{\nm,q}(\Omega)$, $q\in[1,\infty]$, share the same fundamental function, the fundamental functions of $Y_{q_1}$ and $Y_{q_2}$ are essentially different whenever $q_1,q_2\in[1,\infty]$ are not equal.
\end{remark}

\begin{theorem}\label{T:nonexistence-optimal-orlicz-on-level}
Let $m,n\in\N$ be such that $n\ge2$ and $m\le n-1$, and let $\Omega\subset \R^n$ be a bounded domain with Lipschitz boundary.
Let $Y(\Omega)$ be an \ri~space.
Assume that there is no largest Orlicz space $L^A(\Omega)$ in the Sobolev embedding
\begin{equation} \label{E:emb-MY}
	W^{m}L^A(\Omega) \hra M(Y)(\Omega).
\end{equation}
Then there is no largest Orlicz space $L^A(\Omega)$ in the Sobolev embedding
\begin{equation} \label{E:emb-Y}
	W^{m}L^A(\Omega) \hra Y(\Omega).
\end{equation}
\end{theorem}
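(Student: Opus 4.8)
The plan is to push the problem to the domain side and apply the principal alternative for Sobolev embeddings. Let $X(\Omega)$ be the optimal (largest) \ri~domain space in $W^mX(\Omega)\hra Y(\Omega)$ and let $X_M(\Omega)$ be the optimal \ri~domain space in $W^mX_M(\Omega)\hra M(Y)(\Omega)$; both exist and are given by~\eqref{E:optimal-ri-domain}. By part~\ref{en:PA-sobolev-embeddings-domain} of Theorem~\ref{T:principal-alternative-sobolev-embeddings}, a largest Orlicz space in~\eqref{E:emb-Y} exists if and only if $L(X)\subset X$, and a largest Orlicz space in~\eqref{E:emb-MY} exists if and only if $L(X_M)\subset X_M$. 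Hence it suffices to prove the contrapositive of the assertion: \emph{if $L(X)\subset X$, then $L(X_M)\subset X_M$.}

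First I would observe that $Y$ and its fundamental Marcinkiewicz space $M(Y)$ lie on the same fundamental level, since $\varphi_{M(Y)}=\varphi_Y$ by~\eqref{E:fundamental-ri}. Corollary~\ref{C:fundamental-domain-equality} then forces $\varphi_X\approx\varphi_{X_M}$, i.e.\ $X$ and $X_M$ are on the same fundamental level, so their fundamental Orlicz spaces coincide, $L(X)=L(X_M)$ (the equivalence of fundamental functions translates, via~\eqref{E:fundamental-inequalities}, into the two-sided embedding $L(X)\hra L(X_M)\hra L(X)$). Next, the sandwich~\eqref{E:Lorentz-Marcinkiewicz-Sandwich} gives $Y\hra M(Y)$, so every \ri~space $Z$ with $W^mZ(\Omega)\hra Y(\Omega)$ also satisfies $W^mZ(\Omega)\hra M(Y)(\Omega)$; applying this with $Z=X$ and using the optimality of $X_M$ yields $X\subset X_M$. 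Combining the two facts, if $L(X)\subset X$ then
\[
	L(X_M)=L(X)\subset X\subset X_M,
\]
which is exactly $L(X_M)\subset X_M$. By part~\ref{en:PA-sobolev-embeddings-domain} of Theorem~\ref{T:principal-alternative-sobolev-embeddings} applied with target $M(Y)$, this means $L(X_M)$ is the largest Orlicz space in~\eqref{E:emb-MY}, contradicting the hypothesis; hence $L(X)\not\subset X$, and no largest Orlicz space in~\eqref{E:emb-Y} exists.

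Once Corollary~\ref{C:fundamental-domain-equality} is granted, the argument is pure bookkeeping; the only points needing care are to invoke the correct branch of the dichotomy in Theorem~\ref{T:principal-alternative-sobolev-embeddings} and to keep straight the two relations between the optimal domains — the inclusion $X\subset X_M$ on the one hand, and the identity $L(X)=L(X_M)$ on the other. The substantive ingredient behind everything — that the fundamental function of the optimal \ri~domain space is a prescribed functional of $\varphi_Y$, so that targets sharing a fundamental level have optimal domains sharing a fundamental level — is Theorem~\ref{T:fundamental-optimal-domain-hardy}, which is already at our disposal.
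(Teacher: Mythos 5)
Your proposal is correct and follows essentially the same route as the paper: both introduce the optimal \ri~domains for $Y$ and $M(Y)$, deduce $X\subset X_M$ from $Y\hra M(Y)$ and optimality, use Corollary~\ref{C:fundamental-domain-equality} to get $L(X)=L(X_M)$, and conclude via the dichotomy of Theorem~\ref{T:principal-alternative-sobolev-embeddings}. The only difference is presentational — you argue the contrapositive, while the paper pushes the non-inclusion $L(X_M)\not\hra X_M$ directly down to $L(X)\not\hra X$ — which is the same logical chain.
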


\begin{proof}
Let $X_Y$ be the largest \ri~space in $W^mX_Y\hra Y$ and let $X_{M(Y)}$ be the largest \ri~space in $W^mX_{M(Y)}\hra M(Y)$.
Since $Y\hra M(Y)$, one has $W^mX_Y\hra M(Y)$, and therefore
\begin{equation} \label{E:cor-1}
	X_Y\hra X_{M(Y)}
\end{equation}
due to the optimality of $X_{M(Y)}$ in $W^mX_{M(Y)}\hra M(Y)$.
As $Y$ and $M(Y)$ are on the same fundamental level, Corollary~\ref{C:fundamental-domain-equality} asserts that so do $X_Y$ and $X_{M(Y)}$.
Hence they share the fundamental Orlicz space, \ie $L(X_Y)=L(X_{M(Y)})$.
Theorem~\ref{T:principal-alternative-sobolev-embeddings} then tells us that the non-existence of a largest Orlicz space in embedding~\eqref{E:emb-MY} is equivalent to $L(X_{M(Y)}) \not \hra X_{M(Y)}$, which, combined with~\eqref{E:cor-1}, yields
\begin{equation} \label{E:cor-2}
    L(X_{Y})\not \hra X_{Y}.
\end{equation}
Using Theorem~\ref{T:principal-alternative-sobolev-embeddings} again, relation~\eqref{E:cor-2} implies that no largest Orlicz domain space in~\eqref{E:emb-Y} exists.
\end{proof}

We shall now apply Theorem~\ref{T:nonexistence-optimal-orlicz-on-level} to show that there is no optimal Orlicz domain space $L^A(\Omega)$ in the embedding
\begin{equation}\label{E:14-new-alternative}
    W^{m} L^A(\Omega) \hra L^{\infty,\nm;-1}(\Omega),
\end{equation}
where the range space is described by the norm
\begin{equation}
	\nrm{f}_{\infty,\nm;-1}
		= \left(
				\int_{0}^{1} \biggl(\frac{f^{**}(t)}{1-\log t}\biggr)^\nm \frac{\dd t}{t}
			\right)^\mn,
\end{equation}
solving thereby the open problem mentioned in the introduction.

\begin{theorem} \label{T:bw-alternative}
Let $m,n\in\N$ be such that $n\ge2$ and $m\le n-1$, and let $\Omega\subset \R^n$ be a bounded domain with Lipschitz boundary.
Then there is no largest Orlicz space $L^A(\Omega)$ such that
\begin{equation}
    W^{m} L^A(\Omega) \hra L^{\infty,\nm;-1}(\Omega).
\end{equation}
\end{theorem}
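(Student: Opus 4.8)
The plan is to reduce the assertion, via Theorem~\ref{T:nonexistence-optimal-orlicz-on-level}, to the corresponding non-existence statement for the fundamental Marcinkiewicz space $M(Y)$ of the target $Y=L^{\infty,\nm;-1}(\Omega)$, and then to identify $M(Y)$ with an exponential Orlicz space for which the non-existence of an optimal Orlicz domain is already available.

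First I would determine the fundamental level of $Y$. Testing the defining functional on $\chi_{(0,s)}$, so that $f^{**}(t)=\min\{1,s/t\}$, and splitting the resulting integral at $t=s$ (the part over $(0,s)$ dominates since $\nm>1$), an elementary asymptotic evaluation gives
\[
	\vpy(s)\approx\bigl(1-\log s\bigr)^{-\frac{n-m}{n}}
	\qquad\text{for }s\in(0,1).
\]
This is, up to equivalence, the fundamental function of the exponential Orlicz space $\exp L^{\frac{n}{n-m}}(\Omega)$, whose Young function $E$ satisfies $E(t)\approx e^{t^{n/(n-m)}}$ near infinity. Hence $Y$ lies on the fundamental level of $\exp L^{\frac{n}{n-m}}(\Omega)$, so $M(Y)$ is the fundamental Marcinkiewicz space of $\exp L^{\frac{n}{n-m}}(\Omega)$. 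Since for exponential Young functions the Orlicz and the Marcinkiewicz spaces coincide as sets with equivalent norms --- the conditions $f^*(t)\lesssim(\log\tfrac1t)^{(n-m)/n}$ and $f^{**}(t)\lesssim(\log\tfrac1t)^{(n-m)/n}$ near zero being equivalent --- I would conclude that $M(Y)=\exp L^{\frac{n}{n-m}}(\Omega)$.

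Next I would establish that there is no largest Orlicz space $L^A(\Omega)$ rendering $W^mL^A(\Omega)\hra\exp L^{\frac{n}{n-m}}(\Omega)$ true. For $m=1$ this is the classical Kerman--Pick result, see~\citep{Pic:98,Pic:02}; in general it follows from Theorem~\ref{T:sobolev-to-orlicz} applied with $B=E$. Indeed, from $B^{-1}(t)\approx(\log t)^{(n-m)/n}$ near infinity one sees that $s\mapsto B^{-1}(s)s^{\mn-1}$ is eventually non-increasing, so the Young function $B_n$ of Theorem~\ref{T:sobolev-to-orlicz} obeys $B_n^{-1}(t)\approx(\log t)^{(n-m)/n}\,t^{\mn}$ near infinity, whence $B_n(t)\approx t^{\nm}(\log t)^{-\frac{n-m}{m}}$ and its upper Boyd index equals $I_{B_n}=\nm$. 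Thus the strict inequality $I_{B_n}<\nm$ of~\eqref{E:sobolev-index} fails, and by Theorem~\ref{T:sobolev-to-orlicz} no largest Orlicz domain space exists for this embedding.

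Finally, combining the two steps, there is no largest Orlicz space $L^A(\Omega)$ in $W^mL^A(\Omega)\hra M(Y)(\Omega)$, and Theorem~\ref{T:nonexistence-optimal-orlicz-on-level} then yields the claimed non-existence of a largest Orlicz space in $W^mL^A(\Omega)\hra L^{\infty,\nm;-1}(\Omega)$. I expect the main obstacle to be the identification in the first step: one must evaluate the fundamental function of $L^{\infty,\nm;-1}$ carefully enough to pin down the exponent $-(n-m)/n$ exactly, and then exploit the special coincidence of exponential Orlicz and Marcinkiewicz spaces --- it is precisely this coincidence that brings $M(Y)$ within reach of the known non-existence results and hence of Theorem~\ref{T:nonexistence-optimal-orlicz-on-level}.
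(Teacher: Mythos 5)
Your proposal is correct and follows essentially the same route as the paper: compute $\varphi_Y(t)\approx(1-\log t)^{\frac{m}{n}-1}$, identify $M(Y)=L(Y)=\exp L^{\frac{n}{n-m}}$ using the coincidence of the exponential Orlicz and Marcinkiewicz spaces, invoke the non-existence of a largest Orlicz domain for that exponential target, and conclude via Theorem~\ref{T:nonexistence-optimal-orlicz-on-level}. The only minor deviations are that you compute the fundamental function directly rather than citing Opic--Pick, and for general $m$ you deduce the exponential-target non-existence from Theorem~\ref{T:sobolev-to-orlicz} (the Boyd index condition $I_{B_n}<\frac{n}{m}$ failing since $B_n(t)\approx t^{n/m}(\log t)^{-(n-m)/m}$) instead of citing the known constructions; both verifications check out.
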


\begin{proof}
Denote $Y=L^{\infty,\nm;-1}$.
Then, by~\citet[Lemma~3.7]{Opi:99}, its fundamental function obeys
\begin{equation*}
	\vpy(t)\approx(1-\log t)^{\mn-1}
		\quad\text{for $t\in(0,1)$.}
\end{equation*}
Furthermore, we have $M(Y)=L(Y)=\exp L^{\frac{n}{n-m}}$, the Orlicz space given by the Young function equivalent to $\exp t^{n/(n-m)}$ near infinity.
It is known that no largest Orlicz space $L^A(\Omega)$ exists in
\begin{equation}
	W^{m}L^A(\Omega) \to \exp L^\frac{n}{n-m}(\Omega),
\end{equation}
see \eg~\cite[Theorem~4.3]{Pic:98} for $m=1$ and \cite[Example~5.1b]{Mus:16} for arbitrary $m$.
The assertion then follows from Theorem~\ref{T:nonexistence-optimal-orlicz-on-level}.
\end{proof}

\begin{remark} \label{R:other-sobolev-embeddings}
The main results contained in this section are restricted to the case of Euclidean--Sobolev embedding on bounded Lipschitz domains.
We would like to point out that, following similar arguments, one can obtain analogous results for various other types of Sobolev embeddings.
Namely, the techniques developed here are immediately applicable to embeddings on less regular domains \citep{Cia:15}, first-order Gaussian--Sobolev embeddings \citep{Cia:09}, trace embeddings \citep{Cia:16}, embeddings into spaces endowed with Frostman measures \citep{Cia:20}, embeddings of fractional Orlicz--Sobolev spaces into \ri~spaces \cite{Alb:21}, and embeddings into H\"older, Campanato and Morrey spaces \citep{CiP:98,Cia:03}.
In all these cases, the characterization of the optimal \ri~domain space is contained in the quoted papers, while the non-existence of a largest Orlicz space such that the Sobolev space built upon it is embedded into an appropriate Marcinkiewicz space follows from the results of~\citet{Mus:16}.
\end{remark}

\section{Principal Alternative for Operators}\label{S:principal-alternative-for-operators}

In this section, we shall apply the idea of the principal alternative to several particular operators.
The ultimate aim is to obtain a fairly simple condition for the existence of optimal Orlicz domain or target spaces for boundedness of a given operator in the spirit of Theorem~\ref{T:principal-alternative-sobolev-embeddings}, in which we use ideas from Theorem~\ref{T:intro-principal-alternative-for-spaces}.
It will be useful to formulate first the statement for a generic quasilinear operator.

\paragraph{Quasi-linear operators}

Let $\DD\subset\MRM$ be linear.
Consider a mapping $T\colon\DD\to\measurable\SN$.
If there is a constant $C>0$ such that for all $f, g\in\DD$ and $\lambda\in\R$ one has
\begin{enumerate}
	\item $\abs{T(f+g)}\le C(\abs{Tf} + \abs{Tg})$ $\nu$-\ae in $\SS$,
	\item $\abs{T(\lambda f)} = \abs{\lambda} \abs{Tf}$,
\end{enumerate}
we shall call $T$ a \emph{quasi-linear operator} defined on $\DD$ and taking values in $\measurable(S,\nu)$.
We will also say that $T$ is a quasi-linear operator from $\DD$ to $\measurable(S,\nu)$.
\medskip

Linear operator $T$ from $\DD$ to $\measurable_0(S,\nu)$ is a special case of a quasilinear operator.
If, in such a case, $X\RM\subset\DD$ and $Y\SN$ are \ri spaces,
we shall write $T\colon X \to Y$ provided that there is some $C>0$ such that $\nrm{Tf}_{Y}\le C \nrm{f}_{X}$ for all $f\in X$.
Note that this implies that the image of $X$ under $T$ is contained in~$Y$.

\begin{theorem}[principal alternative for operators]
\label{T:principal-alternative-operators}
Let $X\RM$ and $Y\SN$ be \ri spaces and $T$ be a quasi-linear operator for which $T\colon X\RM \to Y\SN$.
\begin{enumerate}
	\item\label{en:PA-operators-domain} If $X$ is the largest \ri space for which $T\colon X \to Y$, then either $L(X)\hra X$ and $L(X)$ is the largest Orlicz space in
	\begin{equation}\label{E:principal-operator-domain}
	    T\colon L(X)\to Y,
	\end{equation}
	or no largest Orlicz domain space rendering \eqref{E:principal-operator-domain} true exists.
	\item\label{en:PA-operators-target} If $Y$ is the smallest \ri space for which $T\colon X\to Y$, then either $Y\hra L(Y)$ and $L(Y)$ is the smallest Orlicz space in
	\begin{equation}\label{E:principal-operator-target}
	    T\colon X\to L(Y),
	\end{equation}
	or no smallest Orlicz target space rendering \eqref{E:principal-operator-target} true exists.
\end{enumerate}
\end{theorem}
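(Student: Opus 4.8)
The plan is to follow the template of the proof of Theorem~\ref{T:principal-alternative-sobolev-embeddings} and reduce the statement to the principal alternative for \ri spaces, Theorem~\ref{T:intro-principal-alternative-for-spaces}. A preliminary remark: by Section~\ref{S:principal-alternative}, an Orlicz space $L^A\RM$ generated by a quasi-convex (in particular Young) function may be equivalently renormed so as to become an \ri space, and since the boundedness of a quasi-linear operator is insensitive to passing to an equivalent norm, every competing Orlicz space may be treated as an \ri space throughout.

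For part~\ref{en:PA-operators-domain}, assume $X$ is the largest \ri space for which $T\colon X\to Y$. The key identity is that, for every Orlicz space $L^A$,
\begin{equation*}
	T\colon L^A\to Y
	\quad\text{if and only if}\quad
	L^A\hra X.
\end{equation*}
Indeed, if $L^A\hra X$ then $L^A\subset X\subset\DD$, so $T$ is defined on $L^A$, and composing $L^A\hra X$ with $T\colon X\to Y$ gives $T\colon L^A\to Y$; conversely, if $T\colon L^A\to Y$, then $L^A$ is one of the competing \ri spaces and maximality of $X$ forces $L^A\hra X$. Hence an Orlicz space is a domain space in~\eqref{E:principal-operator-domain} precisely when it is contained in $X$, so the existence of a largest Orlicz domain space in~\eqref{E:principal-operator-domain} is equivalent to the existence of a largest Orlicz space contained in $X$, and the two coincide when they exist. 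Applying part~\ref{en:PA-spaces-domain} of Theorem~\ref{T:intro-principal-alternative-for-spaces} to $X$ now yields the dichotomy: either $L(X)\subset X$, \ie $L(X)\hra X$ (for \ri spaces over $\RM$, inclusion and embedding coincide), in which case $L(X)$ is the largest Orlicz space contained in $X$, hence the largest Orlicz domain space in~\eqref{E:principal-operator-domain}, and the boundedness $T\colon L(X)\to Y$ holds by the very inclusion $L(X)\hra X$; or $L(X)\not\subset X$, in which case no largest Orlicz space contained in $X$, and therefore no largest Orlicz domain space in~\eqref{E:principal-operator-domain}, exists.

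Part~\ref{en:PA-operators-target} is dual. If $Y$ is the smallest \ri space for which $T\colon X\to Y$, then for every Orlicz space $L^B$ one has $T\colon X\to L^B$ if and only if $Y\hra L^B$: ``if'' is composition of $T\colon X\to Y$ with $Y\hra L^B$, and ``only if'' follows from the minimality of $Y$ once $L^B$ is recognised as a competing \ri target space. Thus the Orlicz target spaces in~\eqref{E:principal-operator-target} are exactly the Orlicz spaces containing $Y$, and a smallest such Orlicz space exists if and only if a smallest Orlicz space containing $Y$ does. Part~\ref{en:PA-spaces-target} of Theorem~\ref{T:intro-principal-alternative-for-spaces}, applied to $Y$, then produces the alternative claimed in~\ref{en:PA-operators-target}, with $Y\subset L(Y)$ read as $Y\hra L(Y)$.

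As for where the work lies, the substance is already carried by Theorem~\ref{T:intro-principal-alternative-for-spaces}; the operator-level statement is a routine reduction. The only points needing attention are bookkeeping ones: that an Orlicz space counts as a competing \ri space after an inconsequential renorming, and that the embedding $L^A\hra X$ automatically places $L^A$ inside the domain $\DD$ of $T$, so that ``$T\colon L^A\to Y$'' is meaningful. Once these are in place, the argument is a verbatim transcription of the proof of Theorem~\ref{T:principal-alternative-sobolev-embeddings}, with ``$W^m(\,\cdot\,)\hra Y$'' replaced by ``$T\colon\,\cdot\,\to Y$''.
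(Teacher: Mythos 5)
Your proposal is correct and follows essentially the same route as the paper: the same equivalence ``$T\colon L^A\to Y$ iff $L^A\hra X$'' (resp.\ ``$T\colon X\to L^B$ iff $Y\hra L^B$'') obtained from the extremality of $X$ (resp.\ $Y$), followed by an application of Theorem~\ref{T:intro-principal-alternative-for-spaces}. The extra remarks about renorming Orlicz spaces as \ri spaces and about $L^A\hra X$ making $T$ well defined on $L^A$ are harmless bookkeeping that the paper leaves implicit.
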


\begin{proof}
As $X$ is the largest \ri space for which $T\colon X \to Y$, we have, for any quasi-convex function $A$, that $T\colon L^A\to Y$ if and only if $L^A\hra X$.
Hence, $L^A$ is the largest Orlicz space in $T\colon L^A\to Y$ if and only if $L^A$ is the largest Orlicz space contained in $X$.
Therefore, the assertion follows from Theorem~\ref{T:intro-principal-alternative-for-spaces}~\ref{en:PA-spaces-domain}.
The proof of \ref{en:PA-operators-target} follows analogously
from Theorem~\ref{T:intro-principal-alternative-for-spaces}~\ref{en:PA-spaces-target}.
\end{proof}

In the rest of this section, we show the application of our principal alternative to two specific operators, namely to the Hardy-Littlewood maximal operator and the Laplace transform.

\subsection{The Hardy-Littlewood maximal operator}

Let $\Omega\subset\R^n$ be an open set.
The Hardy-Littlewood maximal operator is defined for every locally integrable function $f$ on $\Omega$ by
\begin{equation*}
	Mf(x) = \sup_{Q\ni x} \frac{1}{\abs{Q}}\int_Q \abs{f(y)}\d y
		\quad\text{for $x\in\Omega$},
\end{equation*}
where the supremum is taken over all cubes $Q$ contained in $\Omega$ with sides parallel to the coordinate axes of $\R^n$.
The operator $M$ is sub-linear, hence also quasi-linear.

Our goal is to characterize the optimal Orlicz spaces in
\begin{equation} \label{E:MLAtoLB}
	M\colon L^A(\R^n)\to L^B(\R^n).
\end{equation}
For simplicity, we will restrict ourselves to the case when $\Omega=\R^n$.
The results for arbitrary $\Omega$ with $\abs{\Omega}=\infty$ are the same, while in the case $\abs{\Omega}<\infty$, the results are analogous, but some technical modifications are needed as the behaviour of Young functions near zero is irrelevant.

We will benefit from the fact that a reduction principle for $M$ in Orlicz spaces is known.
Namely, boundedness \eqref{E:MLAtoLB} holds for two Young functions $A$ and $B$ if and only if
\begin{equation} \label{E:MineqAB}
	\int_0^t \frac{B(\tau)}{\tau^2}\d\tau
		\le \frac{A(Kt)}{t}
	\quad\text{for all $t>0$}
\end{equation}
for some $K>0$, see~\eg~\citep[Theorem~2.2]{Kit:97}.

\newcommand{\MtarA}{{B_A}}

Let us discuss the optimal target spaces first.
Suppose that $A$ is a given quasi-convex function.
Assume that
\begin{equation} \label{E:MA-condition}
	\int_0^\infty \widetilde{A}(t_0\tau)e^{-\tau}\dd\tau < \infty
		\quad\text{for some $t_0>0$}.
\end{equation}
Then let $\MtarA\colon[0,\infty]\to[0,\infty]$ denote the quasi-convex function satisfying
\begin{equation} \label{E:MBA-def}
	\widetilde{\MtarA}(t)
		= \int_0^\infty \widetilde{A}(t\tau)e^{-\tau}\dd\tau
		\quad \text{for $t\in[0,\infty]$}.
\end{equation}
If, moreover, $A$ is a Young function, then $\MtarA$ is a Young function as well.

\begin{theorem}[optimal Orlicz target space for the maximal operator]
\label{T:M-target}
Let $A$ be a Young function.
Assume that $A$ satisfies condition~\eqref{E:MA-condition} and let $\MtarA$ be the Young function from~\eqref{E:MBA-def}.
Then the following statements are equivalent.
\begin{enumerate}
	\item\label{en:Mtar-bound}
		It holds that $M\colon L^A(\R^n) \to L^{\MtarA}(\R^n)$;
	\item\label{en:Mtar-space}
		The space $L^{\MtarA}(\R^n)$ is the smallest target Orlicz space in \eqref{E:MLAtoLB};
	\item\label{en:Mtar-space-exists}
		There exists a smallest target Orlicz space in \eqref{E:MLAtoLB};
	\item\label{en:Mtar-condition}
		There exists $K>0$ such that
		\begin{equation} \label{E:MBAleA}
			\int_0^t \frac{\MtarA(\tau)}{\tau^2}\d\tau \le \frac{A(Kt)}{t}
				\quad\text{for all $t\in (0,\infty)$}.
		\end{equation}
\end{enumerate}
Conversely, if $A$ does not obey condition~\eqref{E:MA-condition}, then no Orlicz target space in~\eqref{E:MLAtoLB} exists.
\end{theorem}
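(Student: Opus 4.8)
The plan is to combine three ingredients: the reduction principle~\eqref{E:MineqAB} for $M$ on Orlicz spaces, the explicit description of the smallest \ri~target space for $M$, and the principal alternative for operators, Theorem~\ref{T:principal-alternative-operators}. The equivalence of~\ref{en:Mtar-bound} and~\ref{en:Mtar-condition} is immediate, since inequality~\eqref{E:MBAleA} is precisely~\eqref{E:MineqAB} written for the particular target Young function $B=\MtarA$. It then remains to tie~\ref{en:Mtar-bound}, \ref{en:Mtar-space} and~\ref{en:Mtar-space-exists} together and to prove the closing (negative) assertion.

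The heart of the argument is to identify the fundamental Orlicz space of the optimal \ri~target. By the classical Herz-type equivalence $(Mf)^*\approx f^{**}$, see~\eg~\citep[Chapter~3]{BS}, the boundedness $M\colon L^A(\R^n)\to Z$ is equivalent to the boundedness of the averaging operator $h\mapsto h^{**}$ between the Luxemburg representation spaces $\overline{L^A}(0,\infty)$ and $\overline{Z}(0,\infty)$; since the associate operator of $h\mapsto h^{**}$ is $k\mapsto\int_\cdot^\infty k(s)\,\d s/s$, the smallest admissible \ri~target $Y$ exists, satisfies $M\colon L^A\to Y$, and obeys, by the standard optimal-target machinery (\cf~\citep{Edm:20} and the analogous formula~\eqref{E:optimal-ri-range}),
\begin{equation*}
	\nrm{g}_{Y'(\R^n)}
		= \nrm*{\int_{t}^{\infty}g^*(s)\,\frac{\d s}{s}}_{L^{\widetilde A}(0,\infty)}
	\quad\text{for $g\in\MM(\R^n)$.}
\end{equation*}
Testing this norm on $g=\chi_E$ with $\abs{E}=t$ gives $\varphi_{Y'}(t)\approx\nrm{\log_+(t/\cdot)}_{L^{\widetilde A}(0,\infty)}$, and the substitution $s=te^{-\tau}$ turns the Luxemburg modular $\int_0^t\widetilde A\bigl(\log(t/s)/\lambda\bigr)\,\d s$ into $t\,\widetilde{\MtarA}(1/\lambda)$ by the very definition~\eqref{E:MBA-def} of $\MtarA$. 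Hence $\varphi_{Y'}\approx(\widetilde{\MtarA})^{-1}_\#=\varphi_{L^{\widetilde{\MtarA}}}$ by~\eqref{E:fundamental-ri}, and then the duality $(L^{\MtarA})'=L^{\widetilde{\MtarA}}$ from~\eqref{E:duality} together with the product rule~\eqref{E:fundamental-associate} for fundamental functions of associate spaces yields $\varphi_Y\approx\varphi_{L^{\MtarA}}$; since $L^{\MtarA}$ is the unique Orlicz space on its fundamental level, $L(Y)=L^{\MtarA}$. Note that condition~\eqref{E:MA-condition} is exactly what makes $\widetilde{\MtarA}$ in~\eqref{E:MBA-def} a genuine Young function, so that $\MtarA$, $Y$, and $L(Y)$ are non-degenerate.

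With $L(Y)=L^{\MtarA}$ established, the equivalence of~\ref{en:Mtar-bound}, \ref{en:Mtar-space} and~\ref{en:Mtar-space-exists} drops out of Theorem~\ref{T:principal-alternative-operators}\ref{en:PA-operators-target} with $T=M$ and $X=L^A$. Indeed, either $Y\hra L(Y)=L^{\MtarA}$, in which case $L^{\MtarA}$ is the smallest Orlicz target, which is~\ref{en:Mtar-space} and forces both~\ref{en:Mtar-bound} and~\ref{en:Mtar-space-exists}, or $Y\not\hra L(Y)$ and no smallest Orlicz target exists, so~\ref{en:Mtar-space-exists} fails; in this second case~\ref{en:Mtar-bound} must fail too, because $M\colon L^A\to L^{\MtarA}=L(Y)$ would, by minimality of $Y$ among \ri~targets, force $Y\hra L(Y)$. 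Combined with~\ref{en:Mtar-bound}$\Leftrightarrow$\ref{en:Mtar-condition}, this closes the circle.

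For the converse, assume~\eqref{E:MA-condition} fails but $M\colon L^A(\R^n)\to L^B(\R^n)$ for some Orlicz space; by~\eqref{E:norm-sandwich} we may take $B$ to be a Young function, and~\eqref{E:MineqAB} gives $t\int_0^tB(\tau)\tau^{-2}\,\d\tau\le A(Kt)$ for all $t>0$. A non-trivial Young function has $B(\tau)/\tau$ non-decreasing and eventually positive, so $B(\tau)\gtrsim\tau$ near infinity; hence the left-hand side is $\gtrsim t\log t$ there, forcing $A(t)\gtrsim t\log t$ near infinity, and an elementary Legendre-transform estimate then gives $\widetilde A(s)\lesssim e^{Ks}$ near infinity for some $K>0$, whence $\int_0^\infty\widetilde A(t_0\tau)e^{-\tau}\,\d\tau<\infty$ for every $t_0<1/K$, contradicting the failure of~\eqref{E:MA-condition}. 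The step that I expect to demand the most care is the middle one: justifying the optimal-target description for $M$ (including that $M\colon L^A\to Y$ genuinely holds), executing the substitution so that the weight $e^{-\tau}$ of~\eqref{E:MBA-def} materialises exactly, and tracking the equivalence constants so that $L(Y)=L^{\MtarA}$ emerges.
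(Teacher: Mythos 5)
Your proof is correct, and for the main equivalence it follows essentially the same route as the paper: you test the optimal \ri~target's associate norm (the formula from~\citep{Edm:20}) on characteristic functions, perform the substitution $y=\log(t/\tau)$ to recognise $\varphi_{Y'}\approx(\widetilde{\MtarA})^{-1}_\#$, conclude $L(Y)=L^{\MtarA}$ via~\eqref{E:fundamental-ri} and~\eqref{E:fundamental-associate}, and then invoke Theorem~\ref{T:principal-alternative-operators}\ref{en:PA-operators-target}, with \ref{en:Mtar-bound}$\Leftrightarrow$\ref{en:Mtar-condition} coming from the reduction principle~\eqref{E:MineqAB} — exactly the paper's structure. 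The one place you genuinely diverge is the closing negative assertion: the paper gets it for free from the existence criterion in~\citep[Theorem~3.1]{Edm:20} (if~\eqref{E:MA-condition} fails, then $\log\tfrac1t\notin L^{\widetilde A}(0,1)$, so no \ri~target at all exists, hence no Orlicz one), whereas you argue directly from~\eqref{E:MineqAB}: any nontrivial Young target forces $B(\tau)\gtrsim\tau$ near infinity, hence $A(t)\gtrsim t\log t$ there, hence $\widetilde A(s)\lesssim e^{Ks}$ by conjugation, which makes~\eqref{E:MA-condition} hold for small $t_0$ — a contradiction. Your version is more elementary and self-contained (it does not need the full optimal-\ri~machinery for the degenerate case), while the paper's is shorter and yields the stronger statement that not even an \ri~target exists; both are sound, the only cosmetic caveat being your double use of the letter $K$ for the constant in~\eqref{E:MineqAB} and for the exponential rate.
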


\begin{proof}
Firstly, \citet[Theorem~3.1]{Edm:20} assert that the smallest \ri~space $Y$ for which $M\colon L^A\to Y$ exists if and only if
\begin{equation} \label{E:MtarY-cond}
	\log\tfrac1t \in L^{\widetilde A}(0,1)
\end{equation}
and, if so, it obeys
\begin{equation*}
	\nrm{g}_{Y'(\R^n)}
		= \nrm*{ \int_\tau^\infty \frac{g^*(r)}{r}\d r}_{L^{\widetilde{A}}(0,\infty)}
	\quad\text{for $g\in\MM(\R^n)$}.
\end{equation*}
Calculating the fundamental function of $Y'$ yields
\begin{equation} \label{E:Mtar-fund-1}
	\varphi_{Y'}(t)
		= \nrm*{ \int_\tau^\infty \chi_{[0,t)}(r)\frac{\d r}{r}}_{L^{\widetilde{A}}}
		= \nrm[\big ]{ \chi_{[0,t)}\log\tfrac t\tau}_{L^{\widetilde{A}}}
	\quad\text{for $t\ge 0$},
\end{equation}
which, by the definition of the Luxemburg norm, reads as
\begin{align} \label{E:Mtar-fund-2}
\begin{split}
	\varphi_{Y'}(t)
		& = \inf\set*{ \lambda>0: \int_{0}^{t} \widetilde{A}\brk*{\tfrac1\lambda \log\tfrac{t}{\tau}}\d\tau \le 1}
			= \inf\set*{ \lambda>0: t\int_{0}^{\infty} \widetilde{A}\brk*{\tfrac{y}{\lambda}} e^{-y}\dd y \le 1}
			\\
		& = \inf\set[\big ]{ \lambda>0: \widetilde{\MtarA}\brk*{\tfrac1\lambda} \le \tfrac1t}
			= 1/\sup\set[\big ]{ \sigma>0: \widetilde{\MtarA}(\sigma) \le \tfrac1t}
			= \brk[\big ]{\widetilde{\MtarA}}^{-1}_\#(t),
\end{split}
\end{align}
where we used the change of variables $y=\log (t/\tau)$ and the definition of the correlative function~\eqref{E:def-correlative-function}.
Now, equalities~\eqref{E:Mtar-fund-1} tell us that the general condition~\eqref{E:MtarY-cond} is equivalent to the fact that $\varphi_{Y'}(1)$ is finite, which is in our case equivalent to $A$~\eqref{E:MA-condition} due to \eqref{E:Mtar-fund-2}.
Thus, if \eqref{E:MA-condition} is not satisfied, no \ri~target space for $L^A$ exists and hence no Orlicz space can exist.

In the rest of the proof, assume that \eqref{E:MA-condition} holds.
Relation between fundamental and Young function of an Orlicz space~\eqref{E:fundamental-ri} asserts that $\varphi_{L^{\widetilde{\MtarA}}}=(\widetilde{\MtarA})^{-1}_\#$ which, together with~\eqref{E:Mtar-fund-2}, yields $L(Y')=L^{\widetilde{\MtarA}}$ or, equivalently, $L(Y)=L^{\MtarA}$.
Therefore, the equivalence of \ref{en:Mtar-bound}, \ref{en:Mtar-space} and \ref{en:Mtar-space-exists} now follows from claim~\ref{en:PA-operators-target} of Theorem~\ref{T:principal-alternative-operators}.
Finally, the equivalence of \ref{en:Mtar-bound} and \ref{en:Mtar-condition} follows by the general characterization of boundedness~\eqref{E:MLAtoLB} by condition~\eqref{E:MineqAB} mentioned in the introduction.
\end{proof}

\begin{remark}
One can show that if $\widetilde{A}$ satisfies the $\Delta_2$ condition, then $\MtarA\sim A$ and $L^\MtarA$ is always the smallest Orlicz target in~\eqref{E:MLAtoLB}, see~\eg~\citep[Theorem~1.2.1]{Kok:91}.
However, the $\Delta_2$ condition for $\widetilde{A}$ is not characterizing.

Consider for instance a Young function $A$ that satisfies $A(t)\sim t(\log\tfrac1t)^{\alpha_0}$ near zero and $A(t)\sim t(\log t)^{\alpha_\infty}$ near infinity, where $\alpha_0<-1$ and $\alpha_\infty\ge 1$.
One can infer that $\MtarA(t)\sim t(\log\tfrac1t)^{\alpha_0-1}$ near zero and $\MtarA(t)\sim t(\log t)^{\alpha_\infty-1}$ near infinity.
Then $\widetilde{A}$ does not satisfy the $\Delta_2$ condition, while $A$ and~$\MtarA$ obey condition~\eqref{E:MBAleA}.
Therefore, $M\colon L^A\to L^\MtarA$ and the target is optimal within all Orlicz spaces.
Note that $L^\MtarA$ is also optimal among all \ri~spaces as shown by~\citet[Theorem~3.3]{Edm:20}.
\end{remark}

\newcommand{\MdomB}{{A_B}}

To show the complete picture of optimal Orlicz spaces for the maximal operator, we exhibit the situation on the domain side as well.
This case is much less involved and follows directly from the reduction principle without the need of the principal alternative.

Let $B$ be a given quasi-convex function and assume that
\begin{equation} \label{E:B-cond}
	\int_{0} \frac{B(\tau)}{\tau^2}\dd\tau < \infty.
\end{equation}
We define $\MdomB\colon[0,\infty]\to [0,\infty]$ by
\begin{equation} \label{E:MAB-def}
	\MdomB(t)
		= t \int_{0}^{t} \frac{B(\tau)}{\tau^2}\dd\tau
	\quad\text{for $t\in [0,\infty]$}.
\end{equation}
Then $\MdomB$ is again a quasi-convex function (whence it is equivalent to a Young function, see equality \eqref{E:Young-from-quasiconvex} and~\eqref{E:Young-basic-equivalence}).
The characterization of optimal Orlicz domain space now reads as follows.

\begin{theorem}[optimal Orlicz domain space for the maximal operator]
\label{T:M-domain}
Let $B$ be a Young function.
If $B$ satisfies condition~\eqref{E:B-cond}, then
\begin{equation} \label{E:MLABtoLB}
	M\colon L^\MdomB(\R^n) \to L^B(\R^n),
\end{equation}
where $\MdomB$ is the function given by \eqref{E:MAB-def}.
Furthermore, the Orlicz space $L^\MdomB(\R^n)$ is the largest Orlicz domain space in~\eqref{E:MLABtoLB}.

Conversely, if condition~\eqref{E:B-cond} fails, then no Orlicz domain space rendering~\eqref{E:MLAtoLB} true exists.
\end{theorem}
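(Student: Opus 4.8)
The plan is to deduce the whole statement directly from the reduction principle~\eqref{E:MineqAB}; as the excerpt already indicates, the domain side is much less involved than the target side and needs no principal alternative. First I would establish the embedding~\eqref{E:MLABtoLB}. Since $\MdomB$ is only known to be quasi-convex, I replace it by the Young function $A$ associated with it via~\eqref{E:Young-from-quasiconvex}, so that $A(t)\le\MdomB(t)\le A(2t)$ by~\eqref{E:Young-basic-equivalence} and hence $L^{\MdomB}(\R^n)=L^A(\R^n)$ with equivalent norms. By the very definition~\eqref{E:MAB-def} one has $\int_0^t B(\tau)/\tau^2\dd\tau=\MdomB(t)/t\le A(2t)/t$ for all $t>0$, so the pair $A,B$ satisfies the reduction condition~\eqref{E:MineqAB} with $K=2$, which gives $M\colon L^A(\R^n)\to L^B(\R^n)$, that is,~\eqref{E:MLABtoLB}.

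For the optimality (``largest'') claim I would take an arbitrary quasi-convex function $A$ with $M\colon L^A(\R^n)\to L^B(\R^n)$ and show $L^A\subset L^{\MdomB}(\R^n)$. Replacing $A$ by its associated Young function (which changes neither the space nor the boundedness), the reduction principle~\eqref{E:MineqAB} supplies $K>0$ with $\int_0^t B(\tau)/\tau^2\dd\tau\le A(Kt)/t$ for all $t>0$, that is, $\MdomB(t)\le A(Kt)$, which is precisely $\MdomB\prec A$. By the embedding criterion~\eqref{E:embeddings} (in its global form, valid since $\abs{\R^n}=\infty$), $\MdomB\prec A$ is equivalent to $L^A\hra L^{\MdomB}$, hence $L^A\subset L^{\MdomB}(\R^n)$. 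Combined with~\eqref{E:MLABtoLB}, this makes $L^{\MdomB}(\R^n)$ the largest Orlicz domain space in~\eqref{E:MLABtoLB}.

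For the converse, suppose~\eqref{E:B-cond} fails, so $\int_0 B(\tau)/\tau^2\dd\tau$ diverges at the origin, and assume toward a contradiction that $M\colon L^A(\R^n)\to L^B(\R^n)$ for some quasi-convex $A$. Passing again to the associated Young function and invoking~\eqref{E:MineqAB}, we get $K>0$ with $\int_0^t B(\tau)/\tau^2\dd\tau\le A(Kt)/t$ for all $t>0$. But a Young function vanishes at $0$, is convex, and is finite at some point, hence is finite on a neighbourhood of the origin; thus $A(Kt)/t<\infty$ for all small $t>0$, forcing $\int_0 B(\tau)/\tau^2\dd\tau<\infty$ — a contradiction, so no Orlicz domain space in~\eqref{E:MLAtoLB} can exist. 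I expect the argument to be essentially routine once~\eqref{E:MineqAB} is in hand; the only points requiring care are the bookkeeping between quasi-convex and Young functions (needed because~\eqref{E:MineqAB} and~\eqref{E:embeddings} are stated for Young functions) and the elementary remark that a Young function is finite near the origin, which is exactly what drives the converse.
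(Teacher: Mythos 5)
Your proposal is correct and follows essentially the same route as the paper: both directions and the converse are read off directly from the reduction principle~\eqref{E:MineqAB}, using that $\int_0^t B(\tau)\tau^{-2}\,\d\tau=\MdomB(t)/t$, that $\MdomB\prec A$ translates to $L^A\hra L^{\MdomB}$ via~\eqref{E:embeddings}, and that a Young function is finite near the origin. Your explicit bookkeeping between quasi-convex and Young functions only spells out what the paper leaves implicit.
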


\begin{proof}
Let $B$ obey condition~\eqref{E:B-cond}.
Inequality~\eqref{E:MineqAB} is clearly satisfied when $A$ is replaced by $\MdomB$ and hence, by the reduction principle, boundedness~\eqref{E:MLABtoLB} holds.
To prove the optimality, assume that $A$ obeys~\eqref{E:MLAtoLB}.
Then, due to reduction principle again, inequality~\eqref{E:MineqAB} yields $\MdomB\prec A$, whence $L^A(\R^n)\to L^{\MdomB}(\R^n)$.
The necessity of condition~\eqref{E:B-cond} follows easily from inequality~\eqref{E:MineqAB}.
\end{proof}

\begin{remark}
It is possible to show that the space $L^{\MdomB}$ is in fact the largest domain space in the class of all \ri~spaces.
This follows from the general characterization of the optimal \ri~domain space~\citep[Theorem~3.2]{Edm:20}, the fact that $(Mf)^*\approx f^{**}$ for $f\in\MM$, see~\eg~\citep[Chapter~3, Theorem~3.8]{Ben:80}, and reversed inequalities for the maximal operator~\citep[Theorem~2.6]{Kit:97}.
The details are omitted.
\end{remark}

\subsection{Laplace transform}

\newcommand{\LtarA}{{B_A}}
\newcommand{\LtarAi}{{G_A}}

The Laplace transform $\LL$ is a well-known classical linear integral operator defined for $f\in\MM(0,\infty)$ by
\begin{equation} \label{E:L-def}
	\LL f(t) = \int_{0}^\infty f(\tau)e^{-t\tau}\dd\tau
		\quad\text{for $t>0$}
\end{equation}
whenever the integral exists.
Our aim is to characterize the optimal Orlicz spaces in
\begin{equation} \label{E:LAtoB}
	\LL\colon L^A(0,\infty)\to L^B(0,\infty).
\end{equation}
We will analyze only the target side as, since the Laplace transform is self-adjoint, the domain can be obtained easily using standard duality arguments.

Let $A$ be a given Young function.
Assume that $A$ obeys
\begin{equation} \label{E:LA-cond}
	\int_{0} \log\frac{1}{A(\tau)}\dd\tau < \infty.
\end{equation}
We define functions $\LtarAi,\LtarA\colon[0,\infty]\to[0,\infty]$ by
\begin{equation} \label{E:LBA-def}
	\LtarAi(t) = t \int_{0}^{t} \frac{\widetilde{A}(\tau)}{\tau^2}\dd\tau
	\quad\text{and}\quad
	\LtarA(t)
		= \frac{1}{\LtarAi\brk{\frac{1}{t}}}
		\quad\text{for $t\ge 0$}.
\end{equation}
The characterization of the optimal Orlicz target for the Laplace transform reads as follows.

\begin{theorem} \label{T:alternative-laplace}
Let $A$ be a Young function.
If $A$ satisfies condition~\eqref{E:LA-cond}, then $\LtarA$ is a quasi-convex function and the following statements are equivalent.
\begin{enumerate}
	\item\label{en:Ltar-bound}
	It holds that $\LL\colon L^A(0,\infty)\to L^\LtarA(0,\infty)$;
	\item\label{en:Ltar-space}
	The space $L^\LtarA(0,\infty)$ is the smallest Orlicz target space in~\eqref{E:LAtoB};
	\item\label{en:Ltar-space-exists}
	There exists a smallest Orlicz target space in~\eqref{E:LAtoB}.
\end{enumerate}
Conversely, if condition~\eqref{E:LA-cond} fails, then no Orlicz target space in~\eqref{E:LAtoB} exists.
\end{theorem}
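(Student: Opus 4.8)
The plan is to follow the same template as the proof of Theorem~\ref{T:M-target}, replacing the role of the Hardy--Littlewood maximal operator by the Laplace transform $\LL$ and invoking the principal alternative for operators (Theorem~\ref{T:principal-alternative-operators}\,\ref{en:PA-operators-target}). The backbone is a known description of the optimal \ri~target space for $\LL$ over $L^A(0,\infty)$: just as \citet[Theorem~3.1]{Edm:20} supplies it for $M$, there is a corresponding result expressing the optimal \ri~range $Y$ through an explicit formula for $\nrm{g}_{Y'(0,\infty)}$ built from the kernel $e^{-t\tau}$ and the function $\widetilde A$. First I would recall this description and note that the existence of any \ri~target space is equivalent to the finiteness of $\varphi_{Y'}(1)$, which after an explicit computation of $\varphi_{Y'}$ turns into condition~\eqref{E:LA-cond}. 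This immediately yields the last sentence of the theorem: if \eqref{E:LA-cond} fails, no \ri~target exists, hence no Orlicz one either.

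Next, assuming \eqref{E:LA-cond}, I would compute the fundamental function of the optimal \ri~target $Y$ (equivalently of $Y'$) by testing the defining formula on $g=\chi_{[0,t)}$. The self-adjointness of $\LL$ and the exponential kernel mean this computation mirrors \eqref{E:Mtar-fund-1}--\eqref{E:Mtar-fund-2}: a change of variables turns the inner integral against $e^{-t\tau}$ into something governed by $\widetilde A$, and the outer Luxemburg infimum collapses to an expression in $(\LtarAi)^{-1}_\#$ or rather its correlative. Comparing with the relation between fundamental and Young functions~\eqref{E:fundamental-ri}, I would identify the fundamental Orlicz space $L(Y)=L^{\LtarA}$, where $\LtarA$ is exactly the quasi-convex function defined in~\eqref{E:LBA-def} via $\LtarAi(t)=t\int_0^t \widetilde A(\tau)\tau^{-2}\dd\tau$. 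Along the way one must check that $\LtarA$ is indeed quasi-convex: this follows because $\LtarAi$ is quasi-convex (it has the form~\eqref{E:Young-from-quasiconvex} applied to $\widetilde A$, up to the trivial equivalence), so that $t\mapsto \LtarA(t)/t = \bigl(\LtarAi(1/t)\bigr)^{-1}/t$ is non-decreasing, i.e.\ $\LtarA = (\LtarAi)_\#$ is quasi-convex by the correlative characterisation recalled after~\eqref{E:def-correlative-function}.

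With $L(Y)=L^{\LtarA}$ established, the equivalence of \ref{en:Ltar-bound}, \ref{en:Ltar-space} and \ref{en:Ltar-space-exists} is then a direct application of Theorem~\ref{T:principal-alternative-operators}\,\ref{en:PA-operators-target}: the smallest Orlicz target, if it exists, must be the fundamental Orlicz space $L(Y)=L^{\LtarA}$, and it exists precisely when $Y\hra L^{\LtarA}$, which is the same as the boundedness $\LL\colon L^A\to L^{\LtarA}$ holding. Unlike Theorem~\ref{T:M-target}, here there is no clean reduction-principle analogue of~\eqref{E:MineqAB} packaged into a fourth equivalent ``condition'' statement, which is presumably why the theorem lists only three items; I would therefore not attempt to add one.

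The main obstacle I anticipate is the precise determination of the optimal \ri~target space for $\LL$ over an Orlicz domain and the attendant fundamental-function computation. The formula for $M$ used in Theorem~\ref{T:M-target} rests on the pointwise equivalence $(Mf)^*\approx f^{**}$, which reduces $M$ to a Hardy-type averaging operator; for $\LL$ one instead needs the known rearrangement estimate $(\LL f)^*(t)\approx \int_0^{1/t} f^*(s)\dd s + \tfrac1t\int_{1/t}^\infty f^*(s)\,\tfrac{\d s}{s}$ (or the equivalent two-sided bounds), together with the corresponding optimal \ri~domain/target machinery for this model operator. Granting that description, the change of variables $y=\log(1/A(\tau))$ (or the analogous substitution reflecting the kernel) and the manipulation of the Luxemburg infimum are routine and parallel to \eqref{E:Mtar-fund-2}; the only genuine care needed is bookkeeping near zero and near infinity, since, as remarked for the finite-measure maximal operator, the behaviour of the Young functions at one endpoint may be irrelevant and must be tracked to land exactly on condition~\eqref{E:LA-cond} and on the function $\LtarA$ of~\eqref{E:LBA-def}.
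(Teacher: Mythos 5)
Your proposal is correct and follows essentially the same route as the paper: the paper likewise invokes the known description of the optimal \ri~target for $\LL$ over $L^A$ (namely \citet[Theorem~3.4]{Bur:17}, with $\nrm{g}_{Y'}=\nrm{\int_0^{1/\tau}g^*(r)\dd r}_{L^{\widetilde A}}$), tests it on characteristic functions to compute $\varphi_{Y'}$ and hence $\varphi_Y\approx(\LtarAi)^{-1}$, identifies $L(Y)=L^{\LtarA}$ (with quasi-convexity of $\LtarA=(\LtarAi)_\#$ obtained just as you indicate), reads off that the existence criterion for an \ri~target reduces to condition~\eqref{E:LA-cond} via a Fubini computation, and concludes with Theorem~\ref{T:principal-alternative-operators}\,\ref{en:PA-operators-target}.
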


\begin{proof}
First observe that for $\varepsilon>0$, we have by Fubini's theorem that
\begin{equation}
	\int_{0}^\varepsilon \frac{a^{-1}(t)}{t}\dd t
		= \int_{0}^\varepsilon \frac{1}{t}\int_{0}^{a^{-1}(t)}\d\tau\dd t
		= \int_{0}^{a^{-1}(\varepsilon)} \int_{a(\tau)}^\varepsilon \frac{\d t}{t}\dd\tau
		= \int_{0}^{a^{-1}(\varepsilon)} \log\frac{\varepsilon}{a(\tau)}\dd\tau
\end{equation}
and therefore, using integral representation~\eqref{E:Young-conjugate-as-integral} and trivial estimates~\eqref{E:Young-trivial}, we conclude that condition~\eqref{E:LA-cond} holds if and only if the integral $\int_{0} \widetilde{A}(\tau)\tau^{-2}\dd\tau$ converges.
In this case, the function~$\LtarAi$ is finite near zero and quasi-convex.
Hence, as $\LtarA=(\LtarAi)_\#$, the function $\LtarA$ is quasi-convex as well.

Next, characterization by \citet[Theorem~3.4]{Bur:17} asserts that the optimal~\ri space $Y$ exists if and only if
\begin{equation} \label{E:LtarY-cond}
	\min\brc*{1,\tfrac1\tau} \in L^{\widetilde A}(0,\infty)
\end{equation}
and, in the affirmative case, its norm obeys
\begin{equation*}
	\nrm{g}_{Y'(0,\infty)}
		= \nrm[\bigg ]{ \int_0^{1/\tau} g^*(r)\dd r}_{L^{\widetilde{A}}(0,\infty)}
	\quad\text{for $g\in\MM(0,\infty)$}.
\end{equation*}
Testing this relation by characteristic functions together with standard computations, see~\eg~\cite[Eq.~5.11]{Cia:19} or \cite[Proposition~9.1]{Pic:98}, yields
\begin{equation} \label{E:Ltar-fund-1}
	\varphi_{Y'}(t)
		= \nrm[\big ]{ t\chi_{[0,\frac1t)}(\tau) + \tfrac1\tau\chi_{[\frac1t,\infty)}(\tau)}_{L^{\widetilde{A}}}
		\approx \nrm[\big ]{\tfrac1\tau\chi_{[\frac1t,\infty)}(\tau)}_{L^{\widetilde{A}}}
		\quad\text{for $t>0$}.
\end{equation}
By the definition of the Luxemburg norm, the change of variables and the definition of the right-continuous inverse, we have for $t\in(0,\infty)$ that
\begin{align} \label{E:Ltar-fund-2}
\begin{split}
	\varphi_{Y'}(t)
		& = \inf\set*{ \lambda>0: \int_{1/t}^{\infty} \widetilde{A}\brk*{\tfrac1{\lambda\tau}}\d\tau \le 1}
			= \inf\set*{ \lambda>0: \frac1\lambda \int_{0}^{t/\lambda} \frac{\widetilde{A}(y)}{y^2}\dd y \le 1}
			\\
		& = t \inf\set[\big ]{ \lambda>0: \LtarAi\brk*{\tfrac1\lambda} \le t}
			= t\big/\sup\set[\big ]{ \sigma>0: \LtarAi(\sigma) \le t}
			= {t}\big/{(\LtarAi)^{-1}(t)}.
\end{split}
\end{align}
From~\eqref{E:Ltar-fund-1} we observe that condition~\eqref{E:LtarY-cond} holds if and only if $\varphi_{Y'}(1)$ is finite, that is, by~\eqref{E:Ltar-fund-2}, it is equivalent to the convergence of the integral $\int_{0}\widetilde{A}(\tau)\tau^{-2}\dd\tau$ which holds if and only if~\eqref{E:LA-cond} is satisfied.
Therefore, the assertion will follow by principal alternative (Theorem~\ref{T:principal-alternative-operators}) once we show $L(Y)=L^\LtarA$.
Computations~\eqref{E:Ltar-fund-2} together with relation~\eqref{E:fundamental-associate} between fundamental function of $Y$ and $Y'$ yield
\begin{equation} \label{E:Ltar-fund-3}
	\varphi_Y\approx \brk{\LtarAi}^{-1}.
\end{equation}
Observe that $\LtarAi$ is either zero near zero, infinity near infinity, or strictly increasing.
Therefore, $\brk{\LtarAi}^{-1}$ is in fact continuous on~$(0,\infty)$.
Since $\varphi_Y$ is continuous on $(0,\infty]$ with $\varphi_Y(0)=0$, we conclude that relation~\eqref{E:Ltar-fund-3} holds for all $t\in[0,\infty]$ provided $\brk{\LtarAi}^{-1}$ denotes the left-continuous inverse of~$\LtarAi$.
Finally, by the definition of~$\LtarA$, we have
\begin{align*}
	\varphi_{Y}(t)
		&	\approx \inf\set[\big ]{\tau\ge 0: t\le \LtarAi(\tau)}
			= \inf\set[\big ]{\tau\ge 0: \LtarA(\tfrac1\tau)\le \tfrac1t}
			\\
		&	= 1\big/\sup\set[\big ]{\sigma\ge 0: \LtarA(\sigma)\le\tfrac1t}
			= \brk{\LtarA}^{-1}_\#(t)
\end{align*}
for $t\ge 0$, and therefore $L(Y)=L^\LtarA$, due to relation~\eqref{E:fundamental-ri}.
\end{proof}

\begin{remark}
It is known that $\LtarAi\sim\widetilde A$ if and only if $A$ satisfies the $\Delta_2$ condition, see~\eg~\citep[Theorem~1.2.1]{Kok:91}.
In this case, the definition of $\LtarA$ can be simplified only to $B_A=(\widetilde{A})_\#$.
\end{remark}

\begin{example}
Suppose that $A(t)=t^p$ for $p\in[1,\infty)$. Then a straightforward calculation yields $\LtarA(t)\sim t^{p'}$.
It follows from~\citep[Theorem 3.8]{Bur:17} that $\LL\colon L^p \to L^{p',p}$, in which $L^{p',p}$ is the smallest within all \ri~target spaces.
The application of principal alternative (Theorem~\ref{T:alternative-laplace}) asserts that the smallest Orlicz target space for $L^p$ exists if and only if $L^{p',p}\hra L^{p'}$, and that is the case if and only if $p\in[1,2]$.
\end{example}

The previous example in particular asserts that $\LL\colon L^2\to L^2$ where the target space is the optimal Orlicz space (it is even the optimal \ri~space).
This is the only Lebesgue space with said property.
However, in the class of Orlicz spaces, there are more examples.

\begin{example}
Let $\alpha\in\R$ and suppose that $A$ is a Young function satisfying
\begin{equation}
	A(t)\sim t^2\log^\alpha(t+\tfrac{1}{t})
		\quad\text{for $t>0$}.
\end{equation}
Then $A$ obeys the $\Delta_2$ condition and $B_A\sim A$.
Moreover, the boundedness $\LL\colon L^A \to L^A$ holds true, as proven by~\citet[Remark 3.9]{Bur:17}.
The principal alternative (Theorem~\ref{T:alternative-laplace}) thus dictates that $L^A$ is the smallest Orlicz target space in $\LL\colon L^A\to L^A$.
Note that $L^A$ is in fact also the largest Orlicz domain space therein, since $\LL$ is self-adjoint.
\end{example}

Unlike for the Hardy-Littlewood maximal operator, the characterizing closed-form condition involving $A$ and $B$ in the spirit of inequality~\eqref{E:MineqAB} for embedding~\eqref{E:LAtoB} to hold is missing.
Therefore, we close this section with a rather general condition for the existence of the smallest Orlicz target space in~\eqref{E:LAtoB} which is sufficient but not necessary.
It follows directly from the principal alternative (Theorem \ref{T:alternative-laplace}) and a classical interpolation result for operators bounded between $L^1 \to L^\infty$ and $L^2 \to L^2$ (such as the Fourier, or the Laplace transforms), see~\citep[Theorem~3.10]{Jod:70}.

\begin{corollary}
Suppose $A$ is a Young function satisfying the $\Delta_2$ condition and such that $t\mapsto {A(t)}/{t^2}$ is decreasing.
Then
\begin{equation} \label{E:Laplace-decr-bdd}
	\LL\colon L^A(0,\infty) \to L^{(\widetilde{A})_\#}(0,\infty),
\end{equation}
and the space $L^{(\widetilde{A})_\#}$ is the smallest Orlicz target space rendering~\eqref{E:Laplace-decr-bdd} true.
\end{corollary}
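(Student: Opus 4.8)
The plan is to derive the boundedness~\eqref{E:Laplace-decr-bdd} from a classical interpolation theorem and then to promote it to Orlicz optimality via Theorem~\ref{T:alternative-laplace}, using the $\Delta_2$ simplification recorded in the Remark that follows that theorem.

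First I would record the two endpoint bounds for $\LL$. The pointwise estimate $\abs{\LL f(t)}\le\int_{0}^\infty\abs{f(\tau)}e^{-t\tau}\dd\tau\le\nrm{f}_{L^1}$ gives $\LL\colon L^1(0,\infty)\to L^\infty(0,\infty)$, and the classical fact that the integral operator with symmetric kernel $e^{-t\tau}$ is bounded on $L^2(0,\infty)$ (its square being the Hilbert operator, so its norm is $\sqrt\pi$) gives $\LL\colon L^2(0,\infty)\to L^2(0,\infty)$. Since $A$ is a Young function for which $t\mapsto A(t)/t^2$ is decreasing, the Orlicz space $L^A$ lies in the intermediate range between $L^1$ and $L^2$ covered by~\citep[Theorem~3.10]{Jod:70}; applying that theorem to $\LL$ yields precisely $\LL\colon L^A(0,\infty)\to L^{(\widetilde A)_\#}(0,\infty)$, i.e.\ \eqref{E:Laplace-decr-bdd}. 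As a consistency check, for $A(t)=t^p$ with $1\le p\le 2$ one has $(\widetilde A)_\#\sim t^{p'}$, and the conclusion reduces to the Hausdorff--Young-type bound $\LL\colon L^p\to L^{p'}$, consistent with the subrange $p\in[1,2]$ appearing in the Examples above.

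It remains to pass from boundedness to optimality inside the class of Orlicz spaces. Because $A$ obeys the $\Delta_2$ condition, the Remark after Theorem~\ref{T:alternative-laplace} gives $\LtarAi\sim\widetilde A$ and hence $\LtarA=(\widetilde A)_\#$, so the target $L^{(\widetilde A)_\#}$ is exactly the candidate space $L^{\LtarA}$ of that theorem and statement~\eqref{E:Laplace-decr-bdd} is nothing but item~\ref{en:Ltar-bound} there. Moreover, since we have now exhibited an Orlicz target space for $L^A$, the contrapositive of the last sentence of Theorem~\ref{T:alternative-laplace} forces condition~\eqref{E:LA-cond} to hold (if one prefers a direct check: monotonicity of $A(t)/t^2$ together with $A(t)/t$ nondecreasing yields $\widetilde A(s)\lesssim s^2$ for $s$ near zero, so $\int_{0}\widetilde A(\tau)\tau^{-2}\dd\tau<\infty$, which is equivalent to~\eqref{E:LA-cond} by the opening computation in the proof of Theorem~\ref{T:alternative-laplace}). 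Hence the equivalence of~\ref{en:Ltar-bound}--\ref{en:Ltar-space-exists} in Theorem~\ref{T:alternative-laplace} applies, and since~\ref{en:Ltar-bound} holds, so does~\ref{en:Ltar-space}; that is, $L^{(\widetilde A)_\#}$ is the smallest Orlicz target space, as claimed.

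The main obstacle I anticipate is the precise invocation of the interpolation theorem: one must match the hypothesis that $t\mapsto A(t)/t^2$ is decreasing with the exact range of admissible ``intermediate'' Orlicz spaces in~\citep[Theorem~3.10]{Jod:70}, and confirm that the space it assigns to $L^A$ really is $L^{(\widetilde A)_\#}$ (up to equivalence of Luxemburg norms) rather than merely some comparable \ri~space. Once that is settled, everything else is a direct application of Theorem~\ref{T:alternative-laplace} and its Remark.
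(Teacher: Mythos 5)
Your proposal is correct and follows essentially the same route as the paper: the paper derives the boundedness from the classical $L^1\to L^\infty$ and $L^2\to L^2$ interpolation result of \citep[Theorem~3.10]{Jod:70} and then obtains optimality from Theorem~\ref{T:alternative-laplace} together with the $\Delta_2$ simplification $\LtarAi\sim\widetilde A$ (so $L^{\LtarA}=L^{(\widetilde A)_\#}$) noted in the remark. Your additional verification of condition~\eqref{E:LA-cond} (via $\widetilde A(s)\lesssim s^2$ near zero, or the contrapositive of the last sentence of Theorem~\ref{T:alternative-laplace}) is a correct filling-in of a detail the paper leaves implicit.
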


\appendix

\section{Sub-diagonal and Uniformly Sub-diagonal R.I.~Spaces} \label{S:unioins-of-orlicz-spaces}

In this appendix, we study the unions and intersections of Orlicz spaces in more detail.
The material contained here, while naturally connected with Section \ref{S:unions}, goes in a different direction than the rest of the paper, which is why it is collected in an appendix.
To simplify our expository, we introduce two notions explaining the appendix's title.
\begin{definition}
Let $X$ be an \ri space. We say that $X$ is \emph{sub-diagonal}, if
\begin{equation}
    X=\bigcup \set[\big ]{L^A: \text{$A$ is a Young function such that $L^A\hra X$}},
\end{equation}
and we say that $X$ is \emph{uniformly sub-diagonal} if
\begin{equation}
    X=\bigcup \set[\big ]{L^A: \text{$A$ is a Young function such that $L^A\hrastar X$}},
\end{equation}
where ``$\hrastar$'' denotes the absolutely continuous, also called almost-compact, embedding defined below.
\end{definition}

Using this terminology, the final result of Section~\ref{S:unions} (Theorem~\ref{T:lambda-union}), asserts that every single Lorentz space $\Lambda^E$ is sub-diagonal.
Recalling the Luxemburg representation theorem (see Section~\ref{S:principal-alternative}), this holds when the spaces are considered over an arbitrary $\sigma$-finite non-atomic measure space.
We shall further extend this result and study when a Lorentz space $\Lambda^E$ is in fact uniformly sub-diagonal (Corollary~\ref{C:unions-AC-Lorentz}).
Next, we extend these results to cases when the Lorentz space is replaced with a more general \ri~space.
We provide both abstract results (Theorem \ref{T:lifting-principle}) and concrete results for well-known scales of spaces such as the Lorentz $L^{p,q}$ spaces (Corollary \ref{C:uniform-diagonality-of-Lorentz} and, more generally, Theorem \ref{T:telefonni-seznam}).
In fact, this scale will play the role of a ``model example'' for us, uncovering the origin of the terminology ``sub-diagonal'', as we will be able to show that $L^{p,q}$ is (uniformly) sub-diagonal if and only if $q\leq p$.

Moreover, to tackle the problems related to the almost-compact embeddings, we shall, en passant, characterize the embedding $L^A \hrastar \Lambda^E$ (Proposition~\ref{P:LA-LambdaE-AC}) as well as provide a sufficient condition on $E$ under which one has
\begin{equation*}
	L^A\hra \Lambda^E
		\quad\text{if and only if}\quad
	L^A\hrastar \Lambda^E,
\end{equation*}
for every Young function $A$ (Theorem \ref{T:unions-AC-Lorentz}).
All of these results are new.

We begin with recalling some definitions and known results.

\paragraph{Lorentz $L^{p,q}$ spaces}
Let $\RM$ be a $\sigma$-finite non-atomic measure space.
Let $p,q\in(0,\infty]$ and define, for any $f\in\measurable\RM$, the \emph{Lorentz functional}
\begin{equation}
	\nrm{f}_{p,q}
		= \begin{cases}\displaystyle
				\left(
					\int_{0}^{\infty} \bigl[t^\ip f^{*}(t)\bigr]^q \frac{\dd t}{t}
				\right)^\iq
					& \text{if $q\in(0,\infty)$},
					\\[\bigskipamount]
				\sup\limits_{t\in(0,\infty)} t^\ip f^{*}(t)
					& \text{if $q=\infty$},
			\end{cases}
\end{equation}
and the respective \emph{Lorentz} space $L^{p,q}=\set{f\in\MRM: \nrm{f}_{L^{p,q}}<\infty}$.
If $p\in[1,\infty)$ and $1\le q\le p$, the space $L^{p,q}$ is an \ri space.
If $p\in(0,\infty]$, then $L^{p,p}=L^p$.
If $p=\infty$ and $q<\infty$, then $L^{p,q}$ contains only the zero function.
Finally, if $p\in (1,\infty)$ and $q\in[1,\infty]$, then $L^{p,q}$ can be equivalently renormed as to become an \ri space.
Moreover, with a fixed $p\in(1,\infty]$, one has $L^{p,q}\subset L^{p,r}$ if and only if $q\le r$, irrespectively of the underlying measure space.
Setting $A(t)=t^p$ with $p\in(1,\infty)$, one has
\begin{equation} \label{E:lorentz-relations}
	L^A = L^{p,p},
		\quad
	\Lambda^A = L^{p,1},
		\quad
	M^A = L^{p,\infty},
\end{equation}
with equivalent norms.

\paragraph{Absolutely continuous norms}

Let $\RM$ be a $\sigma$-finite non-atomic measure space.
We say that a sequence $\set{E_n}$ of measurable sets (not necessarily of finite measures) \emph{converges \ae to $\emptyset$}, denoted as $E_n\to\emptyset$ \ae, if the characteristic functions $\chi_{E_n}$ converge to the zero function pointwise \ae.

Let $X$ be an $\ri$ space over $\RM$.
A function $f\in X$ is said to have \emph{absolutely continuous norm} in $X$ if $\nrm{f\chi_{E_n}}_X\to 0$ for every sequence $\set{E_n}$ satisfying
$E_n\to\emptyset$ \ae.
The set of all functions in $X$ of an absolutely continuous norm is denoted by $X_a$.
If $X=X_a$, then we say that $X$ \emph{has absolutely continuous norm}.

For Lebesgue spaces, we have $L^p=L^{p}_a$ if and only if $p<\infty$, and $L^\infty_a=\set{0}$.
For Lorentz spaces, we have $L^{p,q}=L^{p,q}_a$ if and only if $q<\infty$.

For Orlicz spaces, we will need to define two complementary conditions.
We say that a quasi-convex function $A$ satisfies the \emph{$\Delta_2$ condition near infinity/globally}, if there is some $C>0$ such that $A(2t)\leq CA(t)$ near infinity/globally.
We also write $A\in\Delta_2$.
Also, $A$ satisfies the \emph{$\nabla_2$ condition near infinity/globally}, if there is $c>0$ such that $2cA(t)\le A(ct)$ near infinity/globally. We also write $A\in\nabla_2$.
If $A$ is a quasi-convex function, then
$L^A_a=\set{f\in\measurable: \varrho_A(f/\lambda)<\infty\;\text{for all $\lambda>0$}}$
and $L^{A}=L^{A}_a$ if and only if $A$ satisfies the $\Delta_2$ condition (near $\infty$ if $\MR<\infty$ and globally if $\MR=\infty$).

\paragraph{Almost compact embeddings}

Let $X$ and $Y$ be $\ri$ spaces over the same $\sigma$-finite nonatomic measure space $\RM$.
We say that $X$ is \emph{almost compactly embedded} into $Y$, written $X\hrastar Y$, if for every sequence $\set{E_n}$ satisfying
$E_n\to\emptyset$ \ae one has
\begin{equation}
	\lim_{n\to\infty} \sup_{\nrm{f}_X\le1} \nrm{f\chi_{E_n}}_Y = 0.
\end{equation}

Note that $X\hrastar Y$ if and only if $\widebar{X}\hrastar \widebar{Y}$, where $\widebar{X}$ and $\widebar{Y}$ are Luxemburg representation spaces of $X$ and $Y$, respectively.
If $\mu(\RR)<\infty$, then the almost compact embedding $X\hrastar Y$ can be thus characterised by
\begin{equation} \label{E:AC-characterisation}
	\lim_{t\to 0_+} \sup_{\nrm{f}_{\widebar X}\le1} \nrm{f^*\chi_{(0,t)}}_{\widebar Y} = 0.
\end{equation}

Next, $X\hrastar Y$ implies $X\subset Y_a$ and, moreover,
the embedding $X\hrastar Y$ is possible only if $\MR<\infty$, see~\citep[Theorem~4.5]{Sla:12}. This, together with the characterisation of almost-compact embeddings via \eqref{E:AC-characterisation} allows us to restrict our attention to spaces over $(0,1)$ without loss of generality.
Recall an important characterization by~\citet[Theorem 3.1]{Sla:12} asserting that $X\hrastar Y$ if and only if
\begin{equation} \label{E:Lenka-char}
	\text{for every sequence $\set{f_n}$ bounded in $X$ satisfying $f_n\to 0$ \ae, one has $\nrm{f_n}_Y\to 0$}.
\end{equation}

\medskip

To study which spaces $\Lambda^E$ are uniformly sub-diagonal,
we will focus on the question for which quasi-convex functions $A$ and $E$ one has $L^A\hrastar\Lambda^E$.
Let us first recall a few definitions from Section~\ref{S:unions}.
For a given quasi-convex function $E$, the Young function $G$ is given by
\begin{equation} \label{E:G-def-2}
	G(t) = \int_{0}^{t} g(\tau)\dd\tau
		\quad\text{for $t\in[0,\infty]$},
\end{equation}
where
\begin{equation} \label{E:g-def-2}
	g(\tau)
		= \frac{E_\#(\tau)}{\tau}
		= \frac{1}{\tau E\bigl(\tfrac1\tau\bigr)}
	\quad\text{for $\tau\in(0,\infty)$}
\end{equation}
and the function $w\colon(0,\infty)\to[0,\infty]$ is defined as
\begin{equation} \label{E:w-def-2}
	w(\tau) = \frac{1}{g\bigl(G^{-1}(\tau)\bigr)}
		\quad\text{for $\tau\in(0,\infty)$}.
\end{equation}
Some key properties of $G$ and $w$ are summarised in Lemmas~\ref{L:LambdaE-G} and \ref{L:w}.

\begin{lemma} \label{L:w-in-Orlicz-equivalence}
let $B$ be a Young function and $E$ a quasi-convex functions.
Suppose that $w$ is the function associated with $E$ as in~\eqref{E:w-def-2}.
Then
\begin{equation} \label{E:w-in-Orlicz-i}
	w\in L^B(0,1)
		\quad\text{or}\quad
	w\in L^B_a(0,1)
\end{equation}
if and only if
\begin{equation} \label{E:w-in-Orlicz-ii}
	\int_{1/\tau_\infty} b\Bigl(\lambda sE\bigl(\tfrac{1}{s}\bigr)\Bigr)\dd s < \infty
\end{equation}
holds for some or every $\lambda>0$, respectively, where $\tau_\infty=\sup\set{\tau\ge 0: E(\tau)<\infty}$.
\end{lemma}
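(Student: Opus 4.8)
The plan is to reduce both memberships to the finiteness of the Orlicz modular of $w$ and then to evaluate that modular by the very change of variables that underlies the proof of Lemma~\ref{L:w}. Recall first that, as collected in the preliminaries of this appendix, over the finite space $(0,1)$ one has $w\in L^B(0,1)$ if and only if the modular $\varrho_B(w/\lambda)=\int_0^1 B(w(\tau)/\lambda)\,\d\tau$ (of $L^B(0,1)$) is finite for \emph{some} $\lambda>0$ (convexity of $B$ upgrades this to $\varrho_B(w/\lambda)\le1$), while $w\in L^B_a(0,1)$ if and only if $\varrho_B(w/\lambda)<\infty$ for \emph{every} $\lambda>0$. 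So everything comes down to analysing $\varrho_B(w/\lambda)$.

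Write $t_0=1/\tau_\infty$, $t_\infty=1/\tau_0$ and $\phi(s)=sE(1/s)=1/g(s)$ for $s\in(t_0,t_\infty)$. By Lemmas~\ref{L:LambdaE-G} and~\ref{L:w}, $G$ is locally absolutely continuous and strictly increasing on $(t_0,t_\infty)$ with $G'=g$, one has $w=\phi\circ G^{-1}$ by~\eqref{E:w-def}, and $w\equiv0$ on $[G(t_\infty),\infty)$. Performing the substitution $\tau=G(s)$ on $(0,\min\{1,G(t_\infty)\})$ — invoking, exactly as in the proof of Lemma~\ref{L:w}, the Hewitt--Stromberg change-of-variables formula — and using that $w$ vanishes beyond $G(t_\infty)$, we obtain
$$\varrho_B(w/\lambda)=\int_{t_0}^{G^{-1}(1)}\frac{B(\phi(s)/\lambda)}{\phi(s)}\,\d s,\qquad G^{-1}(1)\in(t_0,t_\infty].$$
Now the two-sided Young estimate \eqref{E:Young-trivial}, rewritten as $B(u)/u\le b(u)\le B(2u)/u$ and applied with $u=\phi(s)/\lambda$, yields the pointwise sandwich $B(\phi(s)/\lambda)/\phi(s)\le \lambda^{-1}b(\phi(s)/\lambda)\le B(2\phi(s)/\lambda)/\phi(s)$. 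Integrating over $(t_0,G^{-1}(1))$, the $\varrho_B$-modular at $\lambda$ is at most $\lambda^{-1}\int_{t_0}^{G^{-1}(1)}b(\phi(s)/\lambda)\,\d s$, which in turn is at most the $\varrho_B$-modular at $\lambda/2$. Hence each of these finiteness conditions implies the others after replacing $\lambda$ by $2\lambda$ or $\lambda/2$, so $\varrho_B(w/\lambda)$ is finite for some (resp.\ every) $\lambda>0$ if and only if $\int_{t_0}^{G^{-1}(1)}b(\phi(s)/\lambda)\,\d s$ is finite for some (resp.\ every) $\lambda>0$.

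It remains to recognise this last integral as~\eqref{E:w-in-Orlicz-ii}. Replacing $\lambda$ by $1/\lambda$ — a bijection of $(0,\infty)$ leaving both the ``for some'' and the ``for every'' statements intact — turns $b(\phi(s)/\lambda)=b\bigl(\tfrac1\lambda sE(1/s)\bigr)$ into $b(\lambda sE(1/s))$, and the lower endpoint $t_0=1/\tau_\infty$ is precisely the one in~\eqref{E:w-in-Orlicz-ii}. Finally, the finite upper limit $G^{-1}(1)$ is immaterial for the paper's ``near $1/\tau_\infty$'' convention: on every compact subinterval of $(1/\tau_\infty,1/\tau_0)$ the integrand $b(\lambda sE(1/s))$ is bounded whenever $b$ is finite-valued, whereas if $b$ attains the value $+\infty$ one checks directly — using that $s\mapsto sE(1/s)$ is non-increasing on $(t_0,t_\infty)$ by the quasi-convexity of $E$ — that both \eqref{E:w-in-Orlicz-i} and \eqref{E:w-in-Orlicz-ii} fail for exactly the same $\lambda$'s. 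This gives the asserted equivalence. The only step that needs a little care is precisely this bookkeeping around the upper limit together with the edge case in which $B$ (equivalently $b$) jumps to $+\infty$; the rest is the routine combination of the Lemma~\ref{L:w} change of variables with the elementary estimate~\eqref{E:Young-trivial}.
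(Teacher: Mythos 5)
Your proof is correct and follows essentially the same route as the paper's: reduce both memberships to finiteness of the modular $\varrho_B(w/\lambda)$, transform it by the substitution $\tau=G(s)$ (as in Lemma~\ref{L:w}), and pass between $B$ and $b$ via the trivial estimates~\eqref{E:Young-trivial}, with $1/g(s)=sE(1/s)$ giving condition~\eqref{E:w-in-Orlicz-ii}. The extra bookkeeping you do around the upper limit and the case where $b$ hits $+\infty$ is sound (the paper disposes of it with the remark that convergence matters only near zero, using the monotonicity of $w$), but it does not change the substance of the argument.
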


\begin{proof}
By definition of the spaces $L^B$ and $L^B_a$, statement~\eqref{E:w-in-Orlicz-i}
holds if and only if
\begin{equation}
	\int_{0}^1 B\bigl(\sigma w(r)\bigr)\dd r < \infty
\end{equation}
for some or every $\sigma>0$, respectively.
Note that the convergence is relevant only at zero.
By definition of $w$, we have
\begin{equation} \label{E:w-in-Orlicz-eq}
	\int_{0} B\bigl(\sigma w(\tau)\bigr)\dd\tau
		= \int_{0} B\left( \frac{\sigma}{g\bigl(G^{-1}(\tau)\bigr)} \right)\d\tau
		= \int_{1/\tau_\infty} B\left( \frac{\sigma}{g(s)} \right)g(s)\dd s,
\end{equation}
where we changed variables $\tau=G(s)$ as $G$ is increasing, absolutely continuous and $G'=g$ \ae in a right neighbouring of $1/\tau_\infty$ mapped onto right neighbouring of zero.
Note that the equality in \eqref{E:w-in-Orlicz-eq} means that the integral on the left-hand side converges if and only if the integral on the right-hand side converges.
Finally, convergence of the last integral in~\eqref{E:w-in-Orlicz-eq} for some or every $\sigma>0$ is equivalent~to
\begin{equation} \label{E:w-in-Orlicz-eq-2}
	\int_{1/\tau_\infty} b\left( \frac{\lambda}{g(s)} \right)\d s < \infty
\end{equation}
for some or every $\lambda>0$ due to trivial inequalities~\eqref{E:Young-trivial}.
By definition of $g$, \eqref{E:w-in-Orlicz-eq-2} matches \eqref{E:w-in-Orlicz-ii}.
\end{proof}

\begin{lemma} \label{L:LA-LambdaE-AC}
Let $A$ be a Young function and $E$ be a quasi-convex function.
Let $w$ be the function associated to $E$ as in~\eqref{E:w-def-2}.
Then $L^A(0,1)\hrastar \Lambda^E(0,1)$ if and only if $E$ is finite-valued and $w\in L^{\widetilde A}_a(0,1)$.
\end{lemma}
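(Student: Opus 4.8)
The plan is to combine the representation of the $\Lambda^E$-norm from Lemmas~\ref{L:LambdaE-G} and~\ref{L:w} with the characterisation~\eqref{E:AC-characterisation} of almost-compact embeddings and the Orlicz duality $(L^A)'=L^{\widetilde A}$ from~\eqref{E:duality}. Write $t_0=1/\tau_\infty$ and let $w$ be the function in~\eqref{E:w-def-2}. By~\eqref{E:LambdaE-G} and~\eqref{E:Lambda-norm-weight},
\[
	\nrm{h}_{\Lambda^E(0,1)}\approx\int_0^1 G^{-1}(h_*)=t_0\nrm{h}_{L^\infty}+\int_0^1 h^*\,w
	\qquad\text{for }h\in\MM(0,1),
\]
and Lemma~\ref{L:LambdaE-G} gives $G^{-1}(s)\to t_0$ as $s\to0_+$, with $t_0=0$ exactly when $E$ is finite-valued. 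I would first treat the case that $E$ is \emph{not} finite-valued, i.e.\ $t_0>0$. Then $\chi_{(0,1)}\in L^A(0,1)$ (its norm is the finite value $\vp_{L^A}(1)$), while $\nrm{\chi_{(0,1)}\chi_{(0,1/n)}}_{\Lambda^E}=\nrm{\chi_{(0,1/n)}}_{\Lambda^E}\approx G^{-1}(1/n)\to t_0>0$, so $\chi_{(0,1)}$ does not have absolutely continuous norm in $\Lambda^E$; since $L^A\hrastar\Lambda^E$ forces $L^A\subset(\Lambda^E)_a$, the embedding fails. As the right-hand side of the claimed equivalence also fails here, we may henceforth assume $E$ finite-valued, so that $\nrm{h}_{\Lambda^E(0,1)}\approx\int_0^1 h^*w$.

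The heart of the matter is the equivalence
\[
	\sup_{\nrm{f}_{L^A}\le1}\nrm{f^*\chi_{(0,t)}}_{\Lambda^E(0,1)}
		\;\approx\; \nrm{w\chi_{(0,t)}}_{L^{\widetilde A}(0,1)}
	\qquad\text{for every }t\in(0,1).
\]
Here $f^*\chi_{(0,t)}$ is already non-increasing, hence $\nrm{f^*\chi_{(0,t)}}_{\Lambda^E}\approx\int_0^t f^*w=\int_0^1 f^*(w\chi_{(0,t)})$; the bound ``$\lesssim$'' then follows from Hölder's inequality, $(L^A)'=L^{\widetilde A}$, and $\nrm{f^*}_{L^A}=\nrm{f}_{L^A}$, while ``$\gtrsim$'' is obtained by restricting the supremum to non-increasing $f$ supported in $(0,t)$ and using the Hardy--Littlewood inequality to see that $\sup\set{\int_0^t f^*w:\nrm{f}_{L^A}\le1}$ equals the associate norm $\nrm{w\chi_{(0,t)}}_{(L^A)'}$.

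Granting this, both directions follow at once. By~\eqref{E:AC-characterisation}, $L^A(0,1)\hrastar\Lambda^E(0,1)$ holds if and only if $\lim_{t\to0_+}\sup_{\nrm{f}_{L^A}\le1}\nrm{f^*\chi_{(0,t)}}_{\Lambda^E}=0$, hence, by the displayed equivalence, if and only if $\lim_{t\to0_+}\nrm{w\chi_{(0,t)}}_{L^{\widetilde A}(0,1)}=0$. It remains to note the elementary fact that, $w$ being non-increasing on the finite-measure interval $(0,1)$, one has $w\in L^{\widetilde A}_a(0,1)$ if and only if $\lim_{t\to0_+}\nrm{w\chi_{(0,t)}}_{L^{\widetilde A}(0,1)}=0$: ``$\Leftarrow$'' because $(w\chi_{E_n})^*\le w\chi_{(0,\abs{E_n})}$ and $\abs{E_n}\to0$ whenever $E_n\to\emptyset$ \ae, and ``$\Rightarrow$'' by testing on $E_n=(0,1/n)$ and using monotonicity of $t\mapsto\nrm{w\chi_{(0,t)}}_{L^{\widetilde A}}$. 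This establishes the asserted equivalence.

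I expect the main obstacle to be the second display, specifically the lower bound ``$\gtrsim$'': one must exhibit, for each $t$, near-optimal non-increasing test functions supported in $(0,t)$ realising the Orlicz associate norm $\nrm{w\chi_{(0,t)}}_{L^{\widetilde A}}$, and invoke the Hardy--Littlewood rearrangement inequality to pass from general $f$ to non-increasing ones. Everything else — the norm representation, the reduction away from the non-finite-valued case, and the passage from the limit condition to membership in $L^{\widetilde A}_a$ — is routine once the relevant results of Section~\ref{S:unions} are in hand.
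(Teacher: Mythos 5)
Your proof is correct and follows essentially the same route as the paper: the characterisation \eqref{E:AC-characterisation} of almost-compact embeddings, the norm representation via $G^{-1}$ and $w$ from Lemmas~\ref{L:LambdaE-G} and~\ref{L:w}, and Orlicz duality to convert $\lim_{t\to0_+}\sup_{\nrm{f}_{L^A}\le1}\int_0^t f^*w=0$ into $\lim_{t\to0_+}\nrm{w\chi_{(0,t)}}_{L^{\widetilde A}}=0$, i.e.\ $w\in L^{\widetilde A}_a$. The only cosmetic difference is that you dispose of the non-finite-valued case by an explicit test function, whereas the paper reads it off from the term $t_0\nrm{f}_\infty$ in the two-sided estimate; both are fine.
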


\begin{proof}
Using characterization~\eqref{E:AC-characterisation}, the embedding $L^A\hrastar\Lambda^E$ is equivalent to
\begin{equation} \label{E:LA-LambdaE-AC-char}
	\lim_{t\to 0_+} \sup\set{\nrm{\chi_{(0,t)}f^*}_{\Lambda^E} : \nrm{f}_{L^A}\le 1} = 0.
\end{equation}
Since, by Lemmas~\ref{L:LambdaE-G} and \ref{L:w},
\begin{equation*}
	\nrm{\chi_{(0,t)}f^*}_{\Lambda^E}	
		\le t_0\nrm{f}_\infty + \int_{0}^{t} f^*w
		\le 2\nrm{\chi_{(0,t)}f^*}_{\Lambda^E}	
\end{equation*}
with $t_0=1/\sup\set{\tau\ge0:E(\tau)<\infty}$, condition~\eqref{E:LA-LambdaE-AC-char} holds if and only if $E$ is finite-valued and
\begin{equation*}
	\lim_{t\to 0_+} \sup\set*{\int_{0}^{t}f^*w : \nrm{f}_{L^A}\le 1} = 0
\end{equation*}
which, by duality, reads as $\lim_{t\to 0_+}\nrm{\chi_{(0,t)}w}_{L^{\widetilde A}}=0$. This is equivalent to $w\in L^{\widetilde A}_a$ and we are done.
\end{proof}

\begin{proposition}
\label{P:LA-LambdaE-AC}
Let $A$ be a Young function and $E$ a quasi-convex function.
Then $L^A(0,1)\hrastar \Lambda^E(0,1)$ if and only if
\begin{equation} \label{E:LA-LambdaE-AC-condition}
	\int_{0} a^{-1}\Bigl(\lambda sE\bigl(\tfrac{1}{s}\bigr)\Bigr)\dd s < \infty
	\quad\text{for every $\lambda>0$}.
\end{equation}
\end{proposition}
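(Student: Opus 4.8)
The statement to prove, Proposition~\ref{P:LA-LambdaE-AC}, characterizes the almost-compact embedding $L^A(0,1)\hrastar\Lambda^E(0,1)$ by the explicit integral condition~\eqref{E:LA-LambdaE-AC-condition}. The natural route is to combine the two preceding lemmas: Lemma~\ref{L:LA-LambdaE-AC} already reduces $L^A\hrastar\Lambda^E$ to the pair of conditions ``$E$ is finite-valued'' and ``$w\in L^{\widetilde A}_a(0,1)$'', where $w$ is the weight associated with $E$ via~\eqref{E:w-def-2}. Meanwhile Lemma~\ref{L:w-in-Orlicz-equivalence}, applied with $B=\widetilde A$ (so that $b=\widetilde A{}'=a^{-1}$ by~\eqref{E:Young-conjugate-as-integral}), translates the membership $w\in L^{\widetilde A}_a(0,1)$ into the condition
\begin{equation*}
	\int_{1/\tau_\infty} a^{-1}\Bigl(\lambda sE\bigl(\tfrac1s\bigr)\Bigr)\dd s<\infty
	\quad\text{for every $\lambda>0$,}
\end{equation*}
with $\tau_\infty=\sup\set{\tau\ge0:E(\tau)<\infty}$. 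So the bulk of the proposition is a direct concatenation of these two lemmas; what remains is to reconcile the lower limit $1/\tau_\infty$ appearing in Lemma~\ref{L:w-in-Orlicz-equivalence} with the lower limit $0$ written in~\eqref{E:LA-LambdaE-AC-condition}, and to see that the finite-valuedness of $E$ is automatically encoded by the stated condition.

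First I would handle the case $\tau_\infty<\infty$, i.e.\ $E$ not finite-valued. By Lemma~\ref{L:LA-LambdaE-AC}, the embedding then fails. On the other hand I must check that~\eqref{E:LA-LambdaE-AC-condition} also fails in this case, so that the equivalence is preserved. If $\tau_\infty<\infty$ then $E(1/s)=\infty$ for all $s<1/\tau_\infty$, hence $sE(1/s)=\infty$ there, and $a^{-1}(\infty)$ is either $\infty$ or a positive constant $K$ (it cannot be $0$ since $a^{-1}$ is nondecreasing and we may discard the trivial Young function); in either event the integrand in~\eqref{E:LA-LambdaE-AC-condition} is bounded below by a positive constant on the interval $(0,1/\tau_\infty)$ of positive length, and on that interval the integral diverges — actually I should be slightly careful: if $a^{-1}(\infty)=K<\infty$ the integral $\int_0^{1/\tau_\infty}K\,\d s$ is finite, so this crude argument is not quite enough. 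The clean fix is to observe that in the regime $a^{-1}(\infty)=K<\infty$ one has $a=\infty$ on $[K,\infty)$, hence $A=\infty$ beyond $K$, so $L^A\subset L^\infty$; but $L^\infty_a=\set{0}$ while $X\hrastar Y$ forces $X\subset Y_a$, which is inconsistent with $L^A$ being nontrivial unless $E$ is itself such that $\Lambda^E=L^\infty$, a degenerate case one excludes. A more uniform way to phrase it: when $\tau_\infty<\infty$ both sides of the claimed equivalence fail, and the honest check is just that Lemma~\ref{L:w-in-Orlicz-equivalence} (whose integral starts at $1/\tau_\infty$) together with the finite-valuedness clause of Lemma~\ref{L:LA-LambdaE-AC} already pins this down; I only need to remark that extending the lower limit of integration from $1/\tau_\infty$ down to $0$ changes nothing when $E$ \emph{is} finite-valued, because then $\tau_\infty=\infty$ and $1/\tau_\infty=0$.

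So the substantive case is $\tau_\infty=\infty$, i.e.\ $E$ finite-valued, where $1/\tau_\infty=0$ and the lower limits coincide verbatim. There I simply chain: $L^A(0,1)\hrastar\Lambda^E(0,1)$ $\iff$ (by Lemma~\ref{L:LA-LambdaE-AC}, the finite-valuedness clause being vacuous) $w\in L^{\widetilde A}_a(0,1)$ $\iff$ (by Lemma~\ref{L:w-in-Orlicz-equivalence} with $B=\widetilde A$, using $\widetilde A{}'=a^{-1}$ via~\eqref{E:Young-conjugate-as-integral}, and noting that the ``$L^B_a$'' version of~\eqref{E:w-in-Orlicz-i} pairs with the ``for every $\lambda$'' version of~\eqref{E:w-in-Orlicz-ii}) condition~\eqref{E:LA-LambdaE-AC-condition}. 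The main (only) obstacle is the bookkeeping around the degenerate endpoints — the value $a^{-1}(\infty)$, the left-continuity conventions for the inverses, and the interplay of the lower integration limit $1/\tau_\infty$ with $E$ being infinite-valued — none of which is deep, but all of which must be stated to make the ``if and only if'' airtight. I would therefore present the $\tau_\infty=\infty$ chain in one line and then append a short paragraph disposing of $\tau_\infty<\infty$ by the $L^\infty_a=\set{0}$ observation above.
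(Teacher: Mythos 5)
Your main chain is exactly the paper's proof: Proposition~\ref{P:LA-LambdaE-AC} is obtained by concatenating Lemma~\ref{L:LA-LambdaE-AC} with Lemma~\ref{L:w-in-Orlicz-equivalence} applied to $B=\widetilde A$ (so that $b=\widetilde A{}'=a^{-1}$ by \eqref{E:Young-conjugate-as-integral}), pairing the $L^{\widetilde A}_a$ clause with the ``for every $\lambda$'' clause, and noting that the lower limits $1/\tau_\infty$ and $0$ coincide when $E$ is finite-valued. That part is correct and is what the paper does.

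The gap is in your treatment of the case $\tau_\infty<\infty$. What the equivalence requires there is that condition~\eqref{E:LA-LambdaE-AC-condition} \emph{fails} (the embedding already fails by Lemma~\ref{L:LA-LambdaE-AC}); equivalently, that \eqref{E:LA-LambdaE-AC-condition} forces $E$ to be finite-valued, which is precisely what the paper asserts. Your first, crude argument is in fact sufficient once you invoke the paper's convention \eqref{E:def-rc} for the right-continuous inverse: $a^{-1}(\infty)=\sup\{\tau\ge 0: a(\tau)\le\infty\}=\infty$, so on $(0,1/\tau_\infty)$ the integrand is identically $+\infty$ and the integral diverges; the scenario $a^{-1}(\infty)=K<\infty$ that worried you simply does not occur under this convention (and indeed the proposition would be false under the alternative reading $a^{-1}(\infty)=\lim_{t\to\infty}a^{-1}(t)$, e.g.\ for $L^A=L^\infty$). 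Your proposed ``clean fix'', by contrast, does not close the gap: it argues — and confusedly so, since $L^\infty_a=\{0\}$ concerns $L^\infty$ rather than $(\Lambda^E)_a$ — that the embedding cannot hold, which is already supplied by Lemma~\ref{L:LA-LambdaE-AC}, instead of showing that the integral condition fails; it also appeals to excluding a ``degenerate case'' that the statement of the proposition does not exclude. Replace that paragraph by the one-line observation on $a^{-1}(\infty)$ and your proof coincides with the paper's.
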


\begin{proof}
By Lemma~\ref{L:LA-LambdaE-AC}, $L^A\hrastar \Lambda^E$ if and only if $E$ is finite-valued and $w\in L^{\widetilde A}_a$, where $w$ is the function associated with $E$ as in~\eqref{E:w-def-2}.
Now, if condition~\eqref{E:LA-LambdaE-AC-condition} holds, then $E$ has to be finite-valued and $w$ has absolutely continuous norm in in $L^{\widetilde A}$ due to Lemma~\ref{L:w-in-Orlicz-equivalence}.
Conversely, $w\in L^{\widetilde A}_a$ and $E$ being finite-valued imply condition~\eqref{E:LA-LambdaE-AC-condition} again by Lemma~\ref{L:w-in-Orlicz-equivalence}.
\end{proof}

\begin{theorem} \label{T:unions-AC-Lorentz}
Let $A$ be Young and $E$ a finite-valued quasi-convex function.
If $E\in\nabla_2$ near infinity, then $L^A(0,1)\hra \Lambda^E(0,1)$ if and only if $L^A(0,1)\hrastar \Lambda^E(0,1)$.
\end{theorem}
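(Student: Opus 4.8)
One implication is free: if $L^A(0,1)\hrastar\Lambda^E(0,1)$, then $L^A\subset(\Lambda^E)_a\subset\Lambda^E$ as recalled above, and since $L^A$ and (a renorming of) $\Lambda^E$ are \ri spaces over the same measure space, this already gives $L^A(0,1)\hra\Lambda^E(0,1)$; no growth condition on $E$ is needed here. The whole content of the theorem is therefore the converse, and this is where $E\in\nabla_2$ near infinity enters.

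The plan is to reduce both embeddings to a single scalar integrability condition. Since $E$ is finite-valued, Proposition~\ref{P:LA-LambdaE-AC} says that $L^A(0,1)\hrastar\Lambda^E(0,1)$ holds if and only if
\begin{equation*}
	\int_{0}a^{-1}\!\left(\lambda\,sE\!\left(\tfrac1s\right)\right)\dd s<\infty
	\qquad\text{for \emph{every} }\lambda>0 .
\end{equation*}
For the ordinary embedding, let $G$ and $w$ be the functions associated with $E$ as in~\eqref{E:G-def-2}--\eqref{E:w-def-2}. By Lemmas~\ref{L:LambdaE-G} and~\ref{L:w} one has $\nrm{f}_{\Lambda^E(0,1)}\approx\int_0^1 f^*w$ (the $\nrm{f}_\infty$-term vanishes because $E$ is finite-valued); passing to the supremum over the unit ball of $L^A$, using the Hardy--Littlewood inequality (recall $w$ is non-increasing) together with $(L^A)'=L^{\widetilde A}$ from~\eqref{E:duality}, we get that $L^A(0,1)\hra\Lambda^E(0,1)$ is equivalent to $w\in L^{\widetilde A}(0,1)$; and by Lemma~\ref{L:w-in-Orlicz-equivalence} with $B=\widetilde A$ (so that $(\widetilde A)'=a^{-1}$ by~\eqref{E:Young-conjugate-as-integral}) this is in turn equivalent to the displayed integral being finite for \emph{some} $\lambda>0$. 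So the theorem reduces to the scalar statement: if $E\in\nabla_2$ near infinity and the above integral is finite for one $\lambda_0>0$, then it is finite for all $\lambda>0$.

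To prove this, write $\Phi(s)=sE(1/s)$, which is non-increasing, and choose $c>1$ and $t_1>0$ with $E(ct)\ge 2cE(t)$ for $t\ge t_1$ (for a quasi-convex $E$ the $\nabla_2$ constant is necessarily $>1$); this translates into $\Phi(s/c)\ge 2\Phi(s)$ for $0<s\le 1/t_1$, hence $\Phi(s_{k+j})\ge 2^{j}\Phi(s_k)$ along any geometric sequence $s_k=\delta_0c^{-k}$ with $\delta_0\le 1/t_1$; also $\Phi(s)\to\infty$ as $s\to0^+$, which forces $a^{-1}$ to be finite-valued once $\int_0 a^{-1}(\lambda_0\Phi)<\infty$. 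The case $\lambda\le\lambda_0$ is immediate from the monotonicity of $a^{-1}$, so fix $\lambda>\lambda_0$ and pick $m\in\N$ with $2^m\lambda_0\ge\lambda$. Using that $\Phi$ is non-increasing and $a^{-1}$ non-decreasing, the geometric partition of $(0,\delta_0]$ gives
\begin{equation*}
	\int_0^{\delta_0}a^{-1}\bigl(\lambda\Phi(s)\bigr)\dd s\le(1-c^{-1})\sum_{k\ge0}s_k\,a^{-1}\bigl(\lambda\Phi(s_{k+1})\bigr),
	\qquad
	\int_0^{\delta_0}a^{-1}\bigl(\lambda_0\Phi(s)\bigr)\dd s\ge(1-c^{-1})\sum_{k\ge0}s_k\,a^{-1}\bigl(\lambda_0\Phi(s_k)\bigr).
\end{equation*}
The crucial step is the chain $\lambda\Phi(s_{k+1})\le 2^m\lambda_0\Phi(s_{k+1})\le\lambda_0\Phi(s_{k+1+m})$; feeding it into the first sum and reindexing ($j=k+1+m$, whence $s_k=c^{1+m}s_j$) collapses that sum into $c^{1+m}\sum_{j\ge0}s_j\,a^{-1}(\lambda_0\Phi(s_j))$, and comparison with the lower bound for the $\lambda_0$-integral yields $\int_0^{\delta_0}a^{-1}(\lambda\Phi)\le c^{1+m}\int_0^{\delta_0}a^{-1}(\lambda_0\Phi)<\infty$. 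Thus the integral is finite near $0$ for this $\lambda$, hence for all $\lambda>0$, and Proposition~\ref{P:LA-LambdaE-AC} gives $L^A(0,1)\hrastar\Lambda^E(0,1)$.

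I expect the main obstacle to be precisely this geometric iteration: $\nabla_2$ upgrades the automatic bound $\Phi(s/c)\ge\Phi(s)$ to $\Phi(s/c)\ge 2\Phi(s)$, so a fixed dilation of the argument of $a^{-1}$ (from $\lambda\Phi$ to $\lambda_0\Phi$) is absorbed at the cost of a bounded number $m$ of index shifts, each shift costing only the harmless factor $c$; choosing the right partition and keeping the book-keeping of the shifts honest is the delicate point, whereas the reductions in the earlier paragraphs are routine applications of the lemmas already established.
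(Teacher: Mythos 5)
Your proposal is correct and follows the paper's route almost verbatim: both proofs reduce $L^A(0,1)\hra\Lambda^E(0,1)$ to $w\in L^{\widetilde A}(0,1)$, i.e.\ to finiteness of $\int_{0} a^{-1}\bigl(\lambda sE(\tfrac1s)\bigr)\dd s$ for \emph{some} $\lambda>0$ (via Lemmas~\ref{L:LambdaE-G}, \ref{L:w} and \ref{L:w-in-Orlicz-equivalence}), reduce $L^A(0,1)\hrastar\Lambda^E(0,1)$ to the same condition for \emph{every} $\lambda>0$, and then use $E\in\nabla_2$ near infinity to pass from ``some'' to ``every''. The only divergence is in that last scalar step: the paper rewrites $\nabla_2$ as $2sE(\tfrac1s)\le\tfrac sc E(\tfrac cs)$ for $s$ near zero and absorbs a doubling of $\lambda$ by the one-line substitution $s\mapsto s/c$, iterating over powers of $2$, whereas you discretize along $s_k=\delta_0c^{-k}$ and reindex the resulting sums --- a correct but more laborious rendering of the same idea.
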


\begin{proof}
Assume that $L^A\hra\Lambda^E$, \ie
	$\sup\set*{\nrm{f}_{\Lambda^E} : \nrm{f}_{L^A}\le 1} < \infty$.
Since $E$ is finite-valued, we have by Lemmas~\ref{L:LambdaE-G} and~\ref{L:w} that
	$\sup\set{\int_{0}^1 fw : \nrm{f}_{L^A}\le 1} < \infty$
which by duality means that $w\in L^{\widetilde A}$.
Using Lemma~\ref{L:w-in-Orlicz-equivalence}, the last is equivalent to
\begin{equation} \label{E:w-lambda-some}
	\int_{0} a^{-1}\Bigl(\lambda sE\bigl(\tfrac{1}{s}\bigr)\Bigr)\dd s < \infty
\end{equation}
for some $\lambda>0$.
Our goal is to show that condition~\eqref{E:w-lambda-some} holds in fact for every $\lambda>0$ as Lemma~\ref{L:w-in-Orlicz-equivalence} would then imply that $w\in L^{\widetilde A}_a$ which, in turn, yields the embedding $L^A\hrastar\Lambda^E$, due to Lemma~\ref{L:LA-LambdaE-AC}.

Since $E$ obeys $\nabla_2$ near infinity, there is $c>0$ such that $2cE(t)\le E(ct)$ near infinity, or equivalently,
\begin{equation*}
	2sE\bigl(\tfrac1s\bigr)
		\le \tfrac sc E\bigl(\tfrac cs\bigr)
	\quad\text{for $s$ near zero.}
\end{equation*}
Therefore
\begin{equation*}
	\int_{0} a^{-1}\Bigl(2\lambda sE\bigl(\tfrac{1}{s}\bigr)\Bigr)\dd s
		\le \int_{0} a^{-1}\Bigl(\lambda \tfrac sc E\bigl(\tfrac{c}{s}\bigr)\Bigr)\dd s
		= \frac1c \int_{0} a^{-1}\Bigl(\lambda sE\bigl(\tfrac{1}{s}\bigr)\Bigr)\dd s
		< \infty
\end{equation*}
and the validity of~\eqref{E:w-lambda-some} for $\lambda$ implies the validity also for $2\lambda$.
By induction and monotonicity, condition~\eqref{E:w-lambda-some} therefore holds for all $\lambda>0$.
\end{proof}

The next result is a direct consequence of Theorems~\ref{T:lambda-union} and~\ref{T:unions-AC-Lorentz}.

\begin{corollary} \label{C:unions-AC-Lorentz}
Let $E$ be a finite-valued quasi-convex function.
If $E\in\nabla_2$ near infinity, then $\Lambda^E$ is uniformly sub-diagonal.
\end{corollary}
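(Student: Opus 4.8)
The plan is to obtain the statement by directly combining the two preceding results. By Theorem~\ref{T:lambda-union} --- read over $(0,1)$, which is the relevant ambient space since almost-compact embeddings force the underlying measure to be finite --- one has
\begin{equation*}
	\Lambda^E = \bigcup\set[\big ]{L^A : \text{$A$ is a Young function such that $L^A\hra\Lambda^E$}}.
\end{equation*}
Thus $\Lambda^E$ is in any case sub-diagonal, and the only thing left to do is to upgrade ``$\hra$'' to ``$\hrastar$'' in the index set of this union.

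This is precisely where the hypotheses on $E$ enter. Since $E$ is finite-valued and satisfies $\nabla_2$ near infinity, Theorem~\ref{T:unions-AC-Lorentz} applies to every Young function $A$ and gives the equivalence
\begin{equation*}
	L^A(0,1)\hra\Lambda^E(0,1)
	\quad\Longleftrightarrow\quad
	L^A(0,1)\hrastar\Lambda^E(0,1).
\end{equation*}
Hence the two families $\set[\big ]{L^A : L^A\hra\Lambda^E}$ and $\set[\big ]{L^A : L^A\hrastar\Lambda^E}$ consist of exactly the same Orlicz spaces, so their unions agree. Substituting this into the identity displayed above yields
\begin{equation*}
	\Lambda^E = \bigcup\set[\big ]{L^A : \text{$A$ is a Young function such that $L^A\hrastar\Lambda^E$}},
\end{equation*}
which is the definition of uniform sub-diagonality.

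I do not anticipate a genuine obstacle here: all the analytic content has already been absorbed into Theorems~\ref{T:lambda-union} and~\ref{T:unions-AC-Lorentz}, and the corollary is merely their formal conjunction. The only point meriting a word of care is the bookkeeping about the ambient measure space --- almost-compact embeddings are nontrivial only when $\MR<\infty$, so the claim is understood over $(0,1)$, which is also the setting in which Theorem~\ref{T:unions-AC-Lorentz} is phrased; over that interval the two inputs combine with no friction.
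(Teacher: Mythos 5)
Your proposal is correct and matches the paper's argument: the corollary is stated there as a direct consequence of Theorems~\ref{T:lambda-union} and~\ref{T:unions-AC-Lorentz}, exactly the combination you carry out (with the same observation that the almost-compact setting places everything over $(0,1)$).
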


\begin{example} \label{EX:Lp1}
Let $E(t)=t^p$ for $p\in[1,\infty)$.
Then $E$ satisfies $\nabla_2$ near infinity if and only if $p>1$.
Therefore, by Corollary~\ref{C:unions-AC-Lorentz} and relations \eqref{E:lorentz-relations}, we conclude that $L^{p,1}$ is uniformly sub-diagonal for $p>1$.
However, if $p=1$, then $L^{1,1}=L^1$ is also uniformly sub-diagonal as it follows directly from the Theorem of de la Vall\'e Poussin \citep[Section~1.2, Theorem~2]{Rao:91}, or from Theorem~\ref{T:unions-AC-Orlicz} below.
\end{example}

\begin{remark}
It follows from Example~\ref{EX:Lp1} that the condition of $E\in\nabla_2$ near infinity is not characterising in Corollary~\ref{C:unions-AC-Lorentz}.
\end{remark}

Our next goal will be to study when an analogy of Corollary~\ref{C:unions-AC-Lorentz} holds upon replacing $\Lambda^E$ with a different space.
First, for Orlicz spaces, the situation is quite easy.

\begin{theorem} \label{T:unions-AC-Orlicz}
Let $E$ be a quasi-convex function. Then $L^E$ is uniformly sub-diagonal
if and only if $E\in\Delta_2$ near infinity.
\end{theorem}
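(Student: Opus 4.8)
The plan is to prove the two implications separately. Throughout I work over the interval $(0,1)$: this is no loss of generality, since the almost-compact embedding $\hrastar$ can occur only between spaces over a finite measure space, so ``uniformly sub-diagonal'' is automatically a statement about the finite-measure situation.

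\textbf{The direction ``uniformly sub-diagonal $\Rightarrow$ $E\in\Delta_2$ near infinity''} I would prove by contraposition, and it is short. Suppose $E\notin\Delta_2$ near infinity. Since the underlying space has finite measure, $L^E\ne L^E_a$, so I may pick $f\in L^E\setminus L^E_a$. For an arbitrary Young function $A$ with $L^A\hrastar L^E$, the excerpt records that an almost-compact embedding forces $L^A\subset(L^E)_a=L^E_a$; hence $f\notin L^A$. Thus $f$ lies in $L^E$ but in none of the Orlicz spaces almost-compactly embedded into $L^E$, so their union is a proper subset of $L^E$ and $L^E$ fails to be uniformly sub-diagonal.

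\textbf{The direction ``$E\in\Delta_2$ near infinity $\Rightarrow$ uniformly sub-diagonal''} carries the real content. First I would observe that $\Delta_2$ near infinity forces $E$ to be finite-valued (if $E$ jumped to $\infty$ at some finite point $\tau_\infty$, then $E(2t)=\infty>CE(t)$ for $t$ slightly below $\tau_\infty$), so, after replacing $E$ by the equivalent Young function from~\eqref{E:Young-from-quasiconvex} -- which changes neither $L^E$ nor the $\Delta_2$ property near infinity -- I may assume $E$ is a finite-valued Young function; in particular $L^E=L^E_a$. Now fix $f\in L^E=L^E_a$; then $h:=E(\abs{f})\in L^1(0,1)$. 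Applying the de la Vall\'ee Poussin theorem \citep[Section~1.2, Theorem~2]{Rao:91} to the single function $h$ produces an $N$-function $\Phi$ with $\Phi(u)/u\to\infty$ as $u\to\infty$ and $\int_0^1\Phi(h)<\infty$. I then set $A:=\Phi\circ E$, which is a Young function (a composition of non-decreasing convex functions vanishing at $0$), and note that $\varrho_A(f)=\int_0^1\Phi(E(\abs{f}))=\int_0^1\Phi(h)<\infty$, hence $f\in L^A$. It remains to check that $L^A\hrastar L^E$; granting this, $f$ belongs to the union $\bigcup\set{L^A:\text{$A$ Young},\,L^A\hrastar L^E}$, and since $f\in L^E$ was arbitrary and the reverse inclusion is obvious, $L^E$ is uniformly sub-diagonal.

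\textbf{The crux -- and the main obstacle -- is the verification that $L^A\hrastar L^E$.} By the finite-measure characterisation~\eqref{E:AC-characterisation}, it suffices to show that for every $\varepsilon>0$ there is $t_\varepsilon>0$ with $\int_0^t E\bigl(\phi^*(s)/\varepsilon\bigr)\dd s\le1$ for all $t\le t_\varepsilon$ and all $\phi$ with $\nrm{\phi}_{L^A}\le1$ (equivalently $\int_0^1\Phi(E(\phi^*))\le1$). I would split this integral at a threshold $s_1$. On $\set{\phi^*<s_1}$ the integrand is bounded by the constant $E(s_1/\varepsilon)$, which is finite precisely because $E$ is finite-valued, contributing at most $E(s_1/\varepsilon)\,t$, small for $t$ small; on $\set{\phi^*\ge s_1}$ the $\Delta_2$ condition yields $E(\phi^*/\varepsilon)\le C_{1/\varepsilon}E(\phi^*)$ once $s_1$ is past the relevant threshold, and choosing $s_1$ so large that $E(s_1)$ exceeds the level beyond which $\Phi(u)\ge Ku$ gives $\int_{\set{\phi^*\ge s_1}}E(\phi^*)\le K^{-1}\int_0^1\Phi(E(\phi^*))\le K^{-1}$, which is smaller than $\tfrac12$ once $K$ is large relative to $C_{1/\varepsilon}$. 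This is exactly where the hypothesis is used: de la Vall\'ee Poussin only delivers that $A=\Phi\circ E$ dominates $E$ in the crude sense $E\prec A$, and it is the $\Delta_2$ condition near infinity that upgrades this to the uniform control an almost-compact embedding demands -- something which, as the first direction shows, is genuinely impossible without it.
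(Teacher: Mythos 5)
Your proof is correct, and its skeleton is the same as the paper's: the converse direction (pick $f\in L^E\setminus L^E_a$ and use that $L^A\hrastar L^E$ forces $L^A\subset L^E_a$) is identical, and in the forward direction you make the same move as the paper, namely apply de la Vall\'ee Poussin to $E(\abs{f})\in L^1$ and take $A=\Phi\circ E$. The one place where you genuinely diverge is the verification of $L^A\hrastar L^E$. The paper does not argue from the definition at all: it invokes the known characterisation of almost-compact embeddings between Orlicz spaces over a finite measure space, $L^A\hrastar L^E$ iff $A(t)/E(Kt)\to\infty$ as $t\to\infty$ for every $K>0$ \citep[Theorems~4.17.7 and~4.17.9]{FS}, and checks this condition by writing $A(t)/E(Kt)=\bigl(\Phi(E(t))/E(t)\bigr)\cdot\bigl(E(t)/E(Kt)\bigr)$, where the first factor blows up and the second is bounded below by $\Delta_2$. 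You instead prove the embedding by hand from \eqref{E:AC-characterisation}: splitting $\int_0^t E(\phi^*/\varepsilon)$ at a level $s_1$, controlling the small-value part by finiteness of $E$ and the large-value part by iterating $\Delta_2$ and using the superlinearity of $\Phi$ against the modular bound $\int_0^1\Phi(E(\phi^*))\le1$. Your quantifier bookkeeping ($C_{1/\varepsilon}$ first, then $K\ge 2C_{1/\varepsilon}$, then $s_1$, then $t_\varepsilon$) is sound, so in effect you reprove the special case of the cited characterisation that is needed; this makes your argument self-contained at the price of length, whereas the paper's version is shorter and makes the role of $\Delta_2$ more transparent. Two further small points in your favour: you explicitly reduce to $E$ being a finite-valued Young function (via \eqref{E:Young-from-quasiconvex}), which the paper leaves implicit even though its own ratio argument and the Young-function requirement on $A=\Phi\circ E$ tacitly need it; just note that the justification "$E(2t)=\infty>CE(t)$ slightly below $\tau_\infty$" relies on the usual convention that a function jumping to $+\infty$ is not counted as $\Delta_2$ near infinity, which is also the convention the paper's proof silently uses.
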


\begin{proof}
Assume that $E\in\Delta_2$ near infinity and let $f\in L^E=L^E_a$.
Then the composition $E\circ f$ belongs to $L^1$.
By the de la Vall\'ee--Poussin theorem \cite[see~\eg][Section 1.2, Theorem 2]{Rao:91}, there is a Young function $F$ satisfying $F(t)/t\to\infty$ as $t\to\infty$ and such that $E\circ f\in L^{F}$.
If we denote $A=F\circ E$, it is $f\in L^A$.
Next, we have for any $K>0$ that
\begin{equation*}
	\frac{A(t)}{E(Kt)}
		= \frac{F\brk[\big ]{E(t)}}{E(t)}\cdot\frac{E(t)}{E(Kt)}
	\quad\text{for $t\in(0,\infty)$}.
\end{equation*}
Since $E(t)\to\infty$, it follows that ${F(E(t))}/{E(t)}\to\infty$ as $t\to\infty$.
On the other hand, $E\in\Delta_2$ near infinity whence ${E(t)}/{E(Kt)}$ is uniformly bounded away from zero for large values of $t$.
Therefore ${A(t)}/{E(Kt)}\to\infty$ as $t\to\infty$ for every $K>0$, which is equivalent to $L^A\hrastar L^E$, see~\eg~\cite[Theorems~4.17.7 and~4.17.9]{FS}.
Consequently, we have
\begin{equation}\label{E:unif-subdiag-Orlicz}
	L^E = \set*{L^A: \text{$A$ is a Young function such that $L^A\hrastar L^E$}}.
\end{equation}
Conversely, assume that $E\notin\Delta_2$ near infinity.
Then there is $f\in L^E \setminus L^E_a$, whence for every $L^A$ with $L^A\hrastar L^E$, we have $f\in L^E\setminus L^A$ and therefore relation \eqref{E:unif-subdiag-Orlicz} cannot hold.
\end{proof}

Note that the sufficient condition in Corollary~\ref{C:unions-AC-Lorentz} and the characterising condition in Proposition~\ref{T:unions-AC-Orlicz} are in a sense complementary to each other.
In fact, one has $E\in\Delta_2$ near infinity if and only if $\widetilde{E}\in\nabla_2$ near infinity.
The difference in the behaviour is illustrated in the following example.

\begin{example}\label{EX:exp-spaces}
Consider a Young function satisfying $E(t)=e^t$ near infinity.
Then it is easily seen that $E\in\nabla_2$ near infinity but $E\not\in\Delta_2$ near infinity.
Hence, using Corollary~\ref{C:unions-AC-Lorentz} and Theorem~\ref{T:unions-AC-Orlicz}, we have that $\Lambda^E$ is uniformly sub-diagonal while $L^E$ is not.
Observe that the spaces $L^E$ and $\Lambda^E$ share the same fundamental function, yet the behaviour described above differs significantly.
\end{example}

Our next aim is to present a certain kind of lifting construction which will enable us to effectively transfer (uniform) sub-diagonality through scales of spaces.
For example, it can be immediately applied to characterise (uniform) sub-diagonality of Lorentz $L^{p,q}$ spaces.

\paragraph{Lifting principle}
Let $X$ be an \ri space and let $F$ be a Young function. We then define the functional $\nrm{\cdot}_{F(X)}\colon\measurable\to[0,\infty]$ by
\begin{equation}
	\nrm{f}_{F(X)} = \inf\set*{\lambda>0:\nrm*{F\brk[\big ]{{\abs{f}}/{\lambda}}}_X\le 1}
		\quad\text{for $f\in\measurable$.}
\end{equation}
We shall denote by $F(X)$ the collection of all $f\in\measurable$ such that $\nrm{f}_{F(X)}<\infty$.
Note that the functional $f\mapsto\nrm{F(\abs{f})}_{X}$ is a \emph{semimodular} in a sense by~\citet[Definition~2.1.1]{Die:11}.
Therefore, by~\citet[Theorem~2.1.7]{Die:11}, the functional $\nrm{\cdot}_{F(X)}$ is a norm.
Moreover, it can be easily observed that it is also rearrangement invariant.
If $F(t)=t^p$ for some $p\in[1,\infty)$, we simply write $X^p$ instead of $F(X)$.
Important examples are the lifting of a Lorentz and an Orlicz space.
If $p\in(1,\infty)$, $q\in[1,\infty]$ and $r\in[1,\infty)$, then
\begin{equation}
	(L^{p,q})^r = L^{pr,qr}.
\end{equation}
Next, if $A$ and $F$ are Young functions, then
\begin{equation} \label{E:lifting-Orlicz-example}
	F(L^A) = L^{A\circ F}.
\end{equation}

The space $F(X)$ has been first introduced by \citet[p.~122]{Cal:64}.
It is also a special case of a more general, so-called \emph{Calder\'on-Lozanovski\v{\i} construction}, \cf~\citep{Lozanovski2}.
For more recent results, one may consult~\citet{KLM}.

For convergence in the space $F(X)$, we recall that a sequence of measurable functions $f_n\in F(X)$ satisfies $\nrm{f_n}_{F(X)}\to 0$ if and only if $\nrm{F(K f_n)}_{X}\to 0$ for every $K>0$.
Therefore, it can be easily seen that if $F\in\Delta_2$ near infinity and the underlying measure space is finite, one has
\begin{equation}\label{E:modular-convergence-equivalence}
	\nrm{f_n}_{F(X)}\to 0
		\quad\text{if and only if}\quad
	\nrm{F(f_n)}_{X} \to 0.
\end{equation}

In the next lemma, we show that the lifting operation is in some sense (uniformly) monotone.

\begin{lemma} \label{L:lifting-hrastar}
Let $X$ and $Y$ be two \ri spaces and $F$ be a Young function.
Then
\begin{equation}
	X\hra Y
		\quad\text{implies}\quad
	F(X)\hra F(Y),
\end{equation}
and, if $F$ moreover satisfies the $\Delta_2$ condition near infinity, then
\begin{equation}
	X\hrastar Y
		\quad\text{implies}\quad
	F(X)\hrastar F(Y).
\end{equation}
\end{lemma}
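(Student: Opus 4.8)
The plan is to prove the two implications separately, in each case using only the definition of the Luxemburg-type norm $\nrm{\cdot}_{F(X)}$, the elementary properties of a Young function ($F$ non-decreasing, $t\mapsto F(t)/t$ non-decreasing, $F(0)=0$ and $F$ continuous at the origin), and — for the second implication — the sequential characterisation \eqref{E:Lenka-char} of almost compact embeddings together with the modular/norm convergence equivalence \eqref{E:modular-convergence-equivalence}.

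For the first implication, I would fix a constant $C\ge1$ such that $\nrm{g}_Y\le C\nrm{g}_X$ for every $g$ (enlarging the embedding constant to $1$ if necessary) and a function $f\in F(X)$, the case $\nrm{f}_{F(X)}=\infty$ being trivial. Two elementary observations do the work: first, $\lambda\mapsto\nrm[\big]{F(\abs{f}/\lambda)}_X$ is non-increasing, so that $\nrm[\big]{F(\abs{f}/\lambda)}_X\le1$ for every $\lambda>\nrm{f}_{F(X)}$; second, the pointwise inequality $F(\abs{f}/(C\lambda))\le C^{-1}F(\abs{f}/\lambda)$ holds because $t\mapsto F(t)/t$ is non-decreasing and $C\ge1$ (it is trivial where the right-hand side is $+\infty$). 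Combining these with the embedding of $X$ into $Y$ gives
\[
	\nrm[\big]{F(\abs{f}/(C\lambda))}_Y
		\le C^{-1}\nrm[\big]{F(\abs{f}/\lambda)}_Y
		\le \nrm[\big]{F(\abs{f}/\lambda)}_X \le 1,
\]
so that $\nrm{f}_{F(Y)}\le C\lambda$ for every $\lambda>\nrm{f}_{F(X)}$; taking the infimum over such $\lambda$ yields $F(X)\hra F(Y)$ with constant $C$.

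For the second implication, where $F$ is additionally assumed to satisfy $\Delta_2$ near infinity, I would first recall that an almost compact embedding forces the underlying measure space to be of finite measure, so that $\mu(\RR)<\infty$ and both \eqref{E:Lenka-char} and \eqref{E:modular-convergence-equivalence} are available. By \eqref{E:Lenka-char} it suffices to fix an arbitrary sequence $\set{f_n}$ bounded in $F(X)$, say $\nrm{f_n}_{F(X)}\le M$, with $f_n\to0$ \ae, and to show $\nrm{f_n}_{F(Y)}\to0$. Assuming $M>0$ (the case $M=0$ being trivial) and setting $g_n=\abs{f_n}/(2M)$, the inequality $2M>\nrm{f_n}_{F(X)}$ and the first observation of the previous paragraph give $\nrm{F(g_n)}_X\le1$, so $\set{F(g_n)}$ is bounded in $X$; moreover $g_n\to0$ \ae and, since $F(0)=0$ and $F$ is continuous at the origin, $F(g_n)\to0$ \ae as well. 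Applying the hypothesis $X\hrastar Y$ through \eqref{E:Lenka-char} to the sequence $\set{F(g_n)}$ yields $\nrm{F(g_n)}_Y\to0$, and then \eqref{E:modular-convergence-equivalence} applied to the space $Y$ turns this modular convergence into $\nrm{g_n}_{F(Y)}\to0$; since $\nrm{\cdot}_{F(Y)}$ is a norm, $\nrm{f_n}_{F(Y)}=2M\nrm{g_n}_{F(Y)}\to0$, which is what we wanted.

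The main — essentially the only — subtle point is this last step, \ie passing from the \emph{modular} convergence $\nrm{F(g_n)}_Y\to0$ back to the \emph{norm} convergence $\nrm{g_n}_{F(Y)}\to0$. This is exactly where the $\Delta_2$ hypothesis on $F$, together with $\mu(\RR)<\infty$, is indispensable, because $\nrm{g_n}_{F(Y)}\to0$ amounts to $\nrm{F(Kg_n)}_Y\to0$ for \emph{every} $K>0$, and it is only under $\Delta_2$ that the equivalence \eqref{E:modular-convergence-equivalence} delivers this from the single instance $\nrm{F(g_n)}_Y\to0$. Everything else is routine manipulation of the definition of $\nrm{\cdot}_{F(X)}$ and the quasi-convexity of $F$.
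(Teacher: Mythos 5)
Your proof is correct and follows essentially the same route as the paper: the first implication is the routine manipulation of the Luxemburg-type norm that the paper dismisses as "directly from the definition", and the second implication is exactly the paper's argument — normalize the bounded sequence, observe that $\set{F(g_n)}$ is bounded in $X$ and tends to $0$ \ae, apply the characterization \eqref{E:Lenka-char} of $X\hrastar Y$, and convert the resulting modular convergence in $Y$ back to norm convergence via \eqref{E:modular-convergence-equivalence}, which is precisely where $\Delta_2$ and the finiteness of the measure (forced by the almost compact embedding) enter. Your explicit rescaling $g_n=\abs{f_n}/(2M)$ is just a cleaner version of the paper's "without loss of generality" normalization.
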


\begin{proof}
The first part follows directly from the definition.
To show the second one, let $\set{f_n}$ be a sequence bounded in $F(X)$ such that $f_n\to0$ \ae.
Using characterization~\eqref{E:Lenka-char}, we need to show that $\nrm{f_n}_{F(Y)}\to 0$.
Without loss of generality, we may assume that $\nrm{F(f_n)}_X\le 1$ for every $n$.
In other words, the sequence $\set{F(f_n)}$ is bounded in $X$.
Moreover, since $F(t)\to0$ as $t\to 0^+$, also $F(f_n)\to 0$ \ae.
Therefore, using characterization~\eqref{E:Lenka-char} of $X\hrastar Y$, we get that $\nrm{F(f_n)}_Y\to 0$.
Finally, since $F$ satisfies the $\Delta_2$ condition, we employ characterization~\eqref{E:modular-convergence-equivalence} to infer that $\nrm{f_n}_{F(Y)}\to 0$ as desired.
\end{proof}

\begin{theorem}[lifting principle]\label{T:lifting-principle}
Let $X(0,1)$ be an \ri space and let $F$ be a Young function.
If $X(0,1)$ is sub-diagonal, then so is $F(X)$.
If $X(0,1)$ is uniformly sub-diagonal and $F$ satisfies the $\Delta_2$ condition near infinity, then $F(X)$ is uniformly sub-diagonal.
\end{theorem}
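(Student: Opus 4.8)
The plan is to reduce everything to the behaviour of Orlicz spaces under the lifting operation $X\mapsto F(X)$, exploiting that this operation turns $L^A$ into $L^{A\circ F}$ (formula~\eqref{E:lifting-Orlicz-example}) and that it is monotone, in both the plain and the almost-compact sense (Lemma~\ref{L:lifting-hrastar}). In both parts of the statement the inclusion ``$\supseteq$'' in the defining identity of (uniform) sub-diagonality is trivial, since every $L^B$ occurring in the union embeds into $F(X)$; hence it suffices to show that each $f\in F(X)$ lies in some $L^B$ with $B$ a Young function such that $L^B\hra F(X)$ (resp.\ $L^B\hrastar F(X)$).

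First I would record the elementary fact that, for a Young function $A$, one has $f\in F(L^A)$ if and only if $F(\abs{f}/\lambda)\in L^A$ for some $\lambda>0$. The ``only if'' direction is immediate from the definition of $\nrm{\cdot}_{F(L^A)}$: finiteness of the norm produces $\lambda$ with $\nrm{F(\abs{f}/\lambda)}_{L^A}\le 1<\infty$. For the converse, convexity of $F$ together with $F(0)=0$ gives $F(st)\le sF(t)$ for all $t\ge 0$ and $s\in(0,1]$, so for $\lambda'\ge\lambda$ one has $F(\abs{f}/\lambda')\le(\lambda/\lambda')F(\abs{f}/\lambda)$ pointwise, whence $\nrm{F(\abs{f}/\lambda')}_{L^A}\to 0$ as $\lambda'\to\infty$ and thus $f\in F(L^A)$. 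Combined with $F(L^A)=L^{A\circ F}$, this says precisely that $f\in L^{A\circ F}$ iff $F(\abs{f}/\lambda)\in L^A$ for some $\lambda>0$. The same ``only if'' reasoning, with $X$ in place of $L^A$, shows that for a given $f\in F(X)$ there is $\lambda>0$ with $g:=F(\abs{f}/\lambda)\in X$.

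Now the argument closes quickly. Since $X$ is sub-diagonal, the function $g\in X$ lies in $L^A$ for some Young function $A$ with $L^A\hra X$; hence $F(\abs{f}/\lambda)=g\in L^A$, so $f\in F(L^A)=L^{A\circ F}$, where $A\circ F$ is again a Young function (being a nondecreasing convex function vanishing at $0$). By Lemma~\ref{L:lifting-hrastar}, $L^A\hra X$ implies $F(L^A)\hra F(X)$, i.e.\ $L^{A\circ F}\hra F(X)$, so $f$ belongs to the union defining sub-diagonality of $F(X)$; this proves the first assertion. For the second, if $X$ is uniformly sub-diagonal we may choose the same $A$ with $g\in L^A$ and now $L^A\hrastar X$, and then invoke the almost-compact half of Lemma~\ref{L:lifting-hrastar} — which is exactly where the hypothesis $F\in\Delta_2$ near infinity enters — to get $L^{A\circ F}=F(L^A)\hrastar F(X)$, again with $f\in L^{A\circ F}$.

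The only point requiring genuine care is the equivalence in the second paragraph describing membership in the lifted Orlicz space $F(L^A)=L^{A\circ F}$ (and checking that $A\circ F$ is a bona fide Young function); beyond this, the statement is essentially a transcription of Lemma~\ref{L:lifting-hrastar}, the $\Delta_2$ assumption playing its customary role of turning norm convergence in $F(\cdot)$ into convergence of the associated modular-type quantities, in the spirit of~\eqref{E:modular-convergence-equivalence}, so that the almost-compact transfer — and hence the transfer of uniform sub-diagonality — goes through.
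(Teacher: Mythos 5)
Your proposal is correct and follows essentially the same route as the paper's proof: given $f\in F(X)$ you pick $\lambda$ with $F(\abs{f}/\lambda)\in X$, use (uniform) sub-diagonality of $X$ to get a Young function $A$, verify $f\in L^{A\circ F}$ via the convexity of $F$, and conclude with $F(L^A)=L^{A\circ F}$ and Lemma~\ref{L:lifting-hrastar}. The only cosmetic difference is that you package the convexity step as a membership equivalence for $F(L^A)$ (letting $\lambda'\to\infty$), whereas the paper performs the equivalent modular estimate directly with a fixed $\eta\ge 1$.
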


\begin{proof}
Assume first that $X$ is sub-diagonal and $f\in F(X)$.
We need to find a Young function $B$ such that $f\in L^B\hra F(X)$.
There exists $\lambda>0$ such that $F({\abs{f}}/{\lambda})\in X$.
Since $X$ is sub-diagonal, there is a Young function $A$ such that $F({\abs{f}}/{\lambda})\in L^A\hra X$.
Therefore, for some $\eta\ge 1$, one has
\begin{equation*}
	\int_{\RR} A\brk[\bigg ]{\frac{1}{\eta} F\brk[\bigg ]{\frac{\abs{f}}{\lambda}}} \dd\mu \le 1.
\end{equation*}
Now, set $B=A\circ F$. Then $B$ is a Young function and $f\in L^B$ since
\begin{equation*}
	\int_{\RR} B\brk[\bigg ]{\frac{\abs{f}}{\eta\lambda}} \dd\mu
		= \int_{\RR} A\brk[\bigg ]{F\brk[\bigg ]{\frac{\abs{f}}{\eta\lambda}}} \dd\mu
		\le \int_{\RR} A\brk[\bigg ]{\frac{1}{\eta} F\brk[\bigg ]{\frac{\abs{f}}{\lambda}}} \dd\mu
		\le 1.
\end{equation*}
It suffices to show that $L^B\hra F(X)$.
Since $L^A\hra X$, we have $F(L^A)\hra F(X)$, and as $F(L^A)=L^B$ due to example~\eqref{E:lifting-Orlicz-example}, the assertion follows.

Now assume that $X$ is uniformly sub-diagonal and $F$ satisfies the $\Delta_2$ condition.
Analogously to the previous part, to a given $f\in F(X)$, there is a Young function $A$ such that $L^A\hrastar X$ and $f\in L^{A\circ F}$.
Now, using Lemma~\ref{L:lifting-hrastar} with example~\eqref{E:lifting-Orlicz-example}, it is $L^{A\circ F}=F(L^A)\hrastar F(X)$.
As $A\circ F$ is a Young function, $F(X)$ is uniformly sub-diagonal.
\end{proof}

\begin{corollary}\label{C:uniform-diagonality-of-Lorentz}
If $p\in (1,\infty)$ and $q\in[1,\infty]$ then the space $L^{p,q}(0,1)$ is sub-diagonal if and only if it is uniformly sub-diagonal, which is the case if and only if $q\le p$.
\end{corollary}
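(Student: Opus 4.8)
The plan is to prove three implications which close into the desired cycle. First, uniform sub-diagonality trivially implies sub-diagonality: if $L^A \hrastar L^{p,q}$ then $L^A \hra L^{p,q}$, so the set of Orlicz spaces appearing in the defining union for uniform sub-diagonality is contained in the one for sub-diagonality, and both unions are contained in $L^{p,q}$, so if the first equals $L^{p,q}$ so does the second. It then remains to show that sub-diagonality forces $q \le p$, and conversely that $q \le p$ forces uniform sub-diagonality; together with the trivial implication these yield all the equivalences claimed in Corollary~\ref{C:uniform-diagonality-of-Lorentz}.

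For the implication ``$q \le p \Rightarrow$ uniformly sub-diagonal'' I would split into two cases. If $q = p$, then $L^{p,p}(0,1) = L^{E}(0,1)$ with $E(t) = t^p$, and since $E$ satisfies $\Delta_2$ near infinity, Theorem~\ref{T:unions-AC-Orlicz} immediately gives that $L^{p,p}$ is uniformly sub-diagonal. If $1 \le q < p$, set $E_0(t) = t^{p/q}$; as $p/q > 1$, the function $E_0$ is a finite-valued quasi-convex (indeed Young) function obeying $E_0 \in \nabla_2$ near infinity, since $2c\,E_0(t) \le E_0(ct)$ holds whenever $c^{p/q-1} \ge 2$. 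Hence Corollary~\ref{C:unions-AC-Lorentz} applies and shows that $\Lambda^{E_0}(0,1) = L^{p/q,1}(0,1)$ (using~\eqref{E:lorentz-relations}) is uniformly sub-diagonal. Applying the lifting principle (Theorem~\ref{T:lifting-principle}) with the Young function $F(t) = t^q$, which satisfies $\Delta_2$ near infinity, and using $F(L^{p/q,1}) = (L^{p/q,1})^q = L^{p,q}(0,1)$, I conclude that $L^{p,q}(0,1)$ is uniformly sub-diagonal.

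For the converse ``$q > p \Rightarrow$ not sub-diagonal'', I would start from the observation that the fundamental function of $L^{p,q}(0,1)$ satisfies $\varphi_{L^{p,q}}(t) \approx t^{1/p}$ for $t \in (0,1)$, so its fundamental Orlicz space is $L(L^{p,q}) = L^{A}(0,1)$ with $A(t) = t^p$, that is, $L(L^{p,q}) = L^p(0,1)$. Since $q > p$ one has $L^p = L^{p,p} \subset L^{p,q}$, so part~\ref{en:PA-spaces-domain} of the principal alternative (Theorem~\ref{T:intro-principal-alternative-for-spaces}) tells us that $L^p$ is the \emph{largest} Orlicz space contained in $L^{p,q}$. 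Therefore every Young function $A$ with $L^A \hra L^{p,q}$ satisfies $L^A \hra L^p$, whence $\bigcup\{L^A : A \text{ Young},\ L^A \hra L^{p,q}\} = L^p$; and this is strictly smaller than $L^{p,q}$, because the inclusion $L^{p,p} \subset L^{p,q}$ is proper for $q > p$ (equality would force the reverse inclusion $L^{p,q} \subset L^{p,p}$, hence $q \le p$). Thus $L^{p,q}$ is not sub-diagonal.

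The proof is essentially a synthesis of results established earlier in the paper, so I do not expect a genuine obstacle; the step requiring the most care is the bookkeeping of parameters in the lifting identity $(L^{p/q,1})^q = L^{p,q}$ and the verification of the side conditions ($E_0 \in \nabla_2$ near infinity and $F \in \Delta_2$ near infinity) needed to invoke Corollary~\ref{C:unions-AC-Lorentz} and the lifting principle. The conceptual heart of the argument — and what the name ``sub-diagonal'' is meant to record — is the identification $L(L^{p,q}) = L^p$, which makes the principal alternative directly applicable in the range $q > p$, together with the recognition that for $q < p$ the space $L^{p,q}$ is a power-type lifting of the endpoint Lorentz space $L^{p/q,1}$.
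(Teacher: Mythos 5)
Your proof is correct and follows essentially the same route as the paper: for $q\le p$ you lift the uniformly sub-diagonal endpoint space $L^{\frac pq,1}$ (Example~\ref{EX:Lp1}, via Corollary~\ref{C:unions-AC-Lorentz}) by the power $F(t)=t^q$ using the lifting principle (Theorem~\ref{T:lifting-principle}), and for $q>p$ you identify $\bigcup\set{L^A: L^A\hra L^{p,q}}=L^{p,p}\subsetneq L^{p,q}$. The only cosmetic differences are that you treat $q=p$ separately through Theorem~\ref{T:unions-AC-Orlicz}, whereas the paper folds it into the lifting argument, and that you justify the $q>p$ step by invoking Theorem~\ref{T:intro-principal-alternative-for-spaces}, a fact the paper simply records as immediate.
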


\begin{proof}
If $q\le p$, then the space $L^{\frac pq,1}$ is uniformly sub-diagonal by Example~\ref{EX:Lp1}.
Therefore $\brk[\big ]{L^{\frac{p}{q},1}}^q=L^{p,q}$ is uniformly sub-diagonal thanks to the lifting principle (Theorem~\ref{T:lifting-principle}).
Conversely, when $q>p$, it is immediate that $\bigcup\set{L^E\colon L^E\hra L^{p,q}}=L^{p,p}$ which is strictly smaller than $L^{p,q}$.
\end{proof}

We remark here that in the previous corollary and theorem one may consider the spaces over the entire $(0,\infty)$ for the results not-concerning almost-compact embeddings.

We conclude this section by listing, in the form of a theorem, various diagonality properties of particular examples of rearrangement-invariant Banach function spaces, which we have shown in the previous text.
Recall that the space $\Lambda^q_w$ is defined for $q<\infty$ as in \eqref{E:classical-lorentz}.

\begin{theorem}\label{T:telefonni-seznam}
Suppose all of the spaces in what follows are over $(0,1)$.
Let $E$ be a Young function, $p\in(1,\infty)$, $q\in[1,\infty]$, $w$ a non-increasing, positive function on $[0,1]$.
Then we have the following.
\begin{enumerate}
	\item\label{en:ts-lebesgue} The space $L^q$ is sub-diagonal. It is uniformly sub-diagonal if and only if $q<\infty$.
	\item\label{en:ts-orlicz} The space $L^E$ is sub-diagonal. It is uniformly sub-diagonal if and only if $E\in\Delta_2$ near infinity.
	\item\label{en:ts-lorentz} The space $L^{p,q}$ is sub-diagonal if and only if it is uniformly sub-diagonal that is if and only if~$q\le p$.
	\item\label{en:ts-lambdaE} The space $\Lambda^E$ is sub-diagonal. It is uniformly sub-diagonal if $E\in\nabla_2$ near infinity.
	\item\label{en:ts-lambdaqw} The space $\Lambda^q_w$ is sub-diagonal. It is uniformly sub-diagonal if $2w(ct)\leq w(t)$ for some $c>0$.
\end{enumerate}
\end{theorem}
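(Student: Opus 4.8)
The plan is to assemble parts \ref{en:ts-lebesgue}--\ref{en:ts-lambdaE} from results already at hand and to reduce part \ref{en:ts-lambdaqw} to a Lorentz endpoint space via the lifting principle. Since every space occurs on $(0,1)$, the conditions $\Delta_2$ and $\nabla_2$ are throughout understood near infinity and, as usual, are applied only to finite-valued functions.

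For part \ref{en:ts-orlicz}, the equivalence ``$L^E$ uniformly sub-diagonal $\iff E\in\Delta_2$'' is exactly Theorem~\ref{T:unions-AC-Orlicz}, so only the unconditional sub-diagonality needs an argument; I would run the first half of the proof of that theorem with ``$\hra$'' replacing ``$\hrastar$'', which makes the use of $\Delta_2$ unnecessary. Concretely: given $f\in L^E$, rescale so that $E\circ\abs{f}\in L^1(0,1)$; by the de~la~Vall\'ee--Poussin theorem \citep[Section~1.2, Theorem~2]{Rao:91} pick a Young function $F$ with $F(t)/t\to\infty$ and $F\circ(E\circ\abs{f})\in L^1(0,1)$; then $A=F\circ E$ is a Young function, $f\in L^A$, and $A(t)/E(t)=F(E(t))/E(t)\to\infty$ yields $E\prec A$, hence $L^A\hra L^E$ on $(0,1)$. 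Part \ref{en:ts-lebesgue} then follows by taking $E(t)=t^q$ for $q<\infty$, while for $q=\infty$ one notes that $L^\infty$ is trivially sub-diagonal (being itself an Orlicz space) but cannot be uniformly sub-diagonal, since $L^\infty_a=\set{0}$ forces any $L^A\hrastar L^\infty$ to be trivial.

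Part \ref{en:ts-lorentz} is Corollary~\ref{C:uniform-diagonality-of-Lorentz}; part \ref{en:ts-lambdaE} is Theorem~\ref{T:lambda-union} (sub-diagonality, which holds over $(0,1)$ as well) together with Corollary~\ref{C:unions-AC-Lorentz} (uniform sub-diagonality when $E\in\nabla_2$). For part \ref{en:ts-lambdaqw}, I would first set $W(t)=\int_0^t w$; as $w$ is non-increasing, positive and locally integrable, $W$ is concave and increasing with $W(0)=0$, so Lemmas~\ref{L:LambdaE-G} and~\ref{L:w} (used with $t_0=0$, $G^{-1}=W$) furnish a finite-valued quasi-convex $E$ with $\nrm{f}_{\Lambda^E}\approx\int_0^\infty f^*w=\nrm{f}_{\Lambda^1_w}$; thus $\Lambda^1_w=\Lambda^E$ on $(0,1)$, and this is a genuine \ri~space because $W$ is concave. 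A one-line rearrangement computation (the same as $(L^{p,q})^r=L^{pr,qr}$) then identifies $\Lambda^q_w=(\Lambda^1_w)^q=F(\Lambda^E)$ with $F(t)=t^q$, so the lifting principle (Theorem~\ref{T:lifting-principle}) applies: sub-diagonality of $\Lambda^E$ (Theorem~\ref{T:lambda-union}) lifts to sub-diagonality of $\Lambda^q_w$, and if $E\in\nabla_2$ near infinity then Corollary~\ref{C:unions-AC-Lorentz} makes $\Lambda^E$ uniformly sub-diagonal, which lifts (since $t^q\in\Delta_2$) to uniform sub-diagonality of $\Lambda^q_w$.

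The main obstacle is the remaining implication for part \ref{en:ts-lambdaqw}: that $2w(ct)\le w(t)$ for some $c>1$ forces $E\in\nabla_2$ near infinity. I would establish this by splitting $W(t)=\int_0^t w$ dyadically along the grid $\{tc^{-k}\}_{k\ge0}$, using $w(c\cdot tc^{-(k+1)})\le\tfrac12 w(tc^{-k})$ to bound the resulting sums (one may enlarge $c$ freely, since $w$ is non-increasing, to absorb multiplicative constants), thereby deducing $W(\theta t)\ge 2\theta\,W(t)$ for a suitable $\theta\in(0,1)$; pushing this through $(E^{-1})_\#\approx W$ and the relation defining $E$ from $(E^{-1})_\#$ gives precisely $2cE(u)\le E(cu)$ for large $u$. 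The rest is bookkeeping, together with the point that the almost-compact embeddings genuinely need a finite measure space, which is why the statement is confined to $(0,1)$.
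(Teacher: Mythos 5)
Your proposal is correct and follows essentially the same route as the paper: parts \ref{en:ts-lebesgue}--\ref{en:ts-lambdaE} are assembled from Theorem~\ref{T:unions-AC-Orlicz}, Corollary~\ref{C:uniform-diagonality-of-Lorentz}, Theorem~\ref{T:lambda-union} and Corollary~\ref{C:unions-AC-Lorentz}, while part \ref{en:ts-lambdaqw} is obtained by writing $\Lambda^1_w=\Lambda^E$ with $E^{-1}_\#=W$, checking that $2w(ct)\le w(t)$ forces $E\in\nabla_2$ near infinity, and lifting via $\Lambda^q_w=(\Lambda^1_w)^q$ and Theorem~\ref{T:lifting-principle}, exactly as in the paper (which records the $w$-to-$\nabla_2$ translation only as an observation, so your extra detail there is welcome). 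The only superfluous piece is the de la Vall\'ee--Poussin argument for plain sub-diagonality of $L^E$: since $L^E$ itself belongs to the defining family, that part is immediate.
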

\begin{proof}
Statement~\ref{en:ts-orlicz} is exactly Theorem~\ref{T:unions-AC-Orlicz} and \ref{en:ts-lebesgue} is a special case of \ref{en:ts-orlicz}.
Claim~\ref{en:ts-lorentz} is Corollary~\ref{C:uniform-diagonality-of-Lorentz}.
Observe that the condition on $w$ implies that $\Lambda^1_w=\Lambda^E$ for a finite-valued concave function $E$ having $E\in\nabla_2$ near infinity.
Therefore statement~\ref{en:ts-lambdaqw} in case $q=1$ and statement~\ref{en:ts-lambdaE} are merely equivalent ways of stating Theorem~\ref{T:lambda-union} and Corollary \ref{C:unions-AC-Lorentz}.
What remains is the general case of \ref{en:ts-lambdaqw}.
By definition of the classical Lorentz space, we have $\Lambda^q_w=(\Lambda^1_w)^q$ and the assertion follows immediately from the lifting principle (Theorem \ref{T:lifting-principle}).
\end{proof}

\begin{remark} \label{R:super-diagonal}
We may also define dual notions of sub-diagonality.
For an \ri space $X$, we may say that $X$ is \emph{super-diagonal} or \emph{uniformly super-diagonal} if
\begin{equation*}
	X = \bigcap\set*{L^A: X\hra L^A},
		\quad\text{or}\quad
	X = \bigcap\set*{L^A: X\hrastar L^A},
\end{equation*}
respectively.
Using duality in \ri~spaces, we infer that $X$ is (uniformly) super-diagonal if and only if $X'$ is (uniformly) sub-diagonal, see also the proof of Theorem~\ref{T:marcinkiewicz-intersection}.
This allows one to immediately generate results dual to those we have provided in this appendix.
The details are omitted.
\end{remark}

\section*{Acknowledgement}

This work was supported by:
\begin{itemize}
\item Danube Region Grant no.~8X2043;
\item Czech Science Foundation, grant no.~P201/21-01976S;
\item Operational Programme Research, Development and Education, Project Postdoc2MUNI no.\ CZ.02.2.69/0.0/0.0/18\_053/0016952;
\item Grant Agency of the Charles University, project no. 327321.
\end{itemize}
Part of the work on this project was carried out during the meeting \emph{Per Partes} held at Horn\'{\i} Lyse\v ciny, June 2-6, 2021.

\bibliographystyle{abbrvnat}

\end{document}